\documentclass[12pt]{article}

\addtolength{\hoffset}{-1cm}
\addtolength{\textwidth}{1.5cm}
\addtolength{\voffset}{-1.5cm}
\addtolength{\textheight}{4cm}

\usepackage{url}
\usepackage{array,longtable}\setlongtables
\usepackage{amsfonts}
\usepackage{amsmath}
\usepackage{amsthm}
\usepackage{verbatim,graphicx}

{\catcode `\@=11 \global\let\AddToReset=\@addtoreset}
\AddToReset{equation}{section}


\newtheorem{lemma}{Lemma}[section]
\newtheorem{theorem}[lemma]{Theorem}
\newtheorem{proposition}[lemma]{Proposition}

\newtheorem{corollary}[lemma]{Corollary}
\theoremstyle{definition}
\newtheorem{definition}[lemma]{Definition}
\theoremstyle{definition}
\newtheorem{remark}[lemma]{Remark}


\newcommand{\R}{\mathbb{R}}

\usepackage{color}
\definecolor{darkred}{rgb}{0.8,0,0}


\def\hat{\widehat}
\def\tilde{\widetilde}

\usepackage{psfrag}
\newcommand{\observation}[1]{}
\newcommand{\generalizations}[1]{}





\newcommand{\hiddenfootnote}[1]{}

\newcommand{\cC}{{\cal C}}

\def\hat{\widehat}
\def\tilde{\widetilde}
\def \bfo {\begin {eqnarray*} }
\def \efo {\end {eqnarray*} }
\def \ba {\begin {eqnarray*} }
\def \ea {\end {eqnarray*} }
\def \beq {\begin {eqnarray}}
\def \eeq {\end {eqnarray}}
\def \supp {\hbox{supp}\,}

\def \dist {\hbox{dist}}

\def \p {\partial}


\begin{document}

\title{\Large {\bf}
Stability of the unique continuation for the wave operator via Tataru inequality and applications
}
\author{Roberta Bosi, Yaroslav Kurylev, Matti Lassas
}
\maketitle
\begin{abstract} In this paper we study the stability of the unique continuation in the case of the wave equation with variable coefficients independent of time. We prove a logarithmic estimate in a arbitrary domain of $\R^{n+1}$, where all the parameters are calculated explicitly in terms of the $C^1$-norm of the coefficients and on the other geometric properties of the problem. We use the Carleman-type estimate proved by Tataru in 1995 and an iteration of the local stability.
We apply the result to the case of a wave equation with data on a cylinder an we get a stable estimate for any positive time, also after the first conjugate point associated with the geodesics of the metric of the variable coefficients.
\end{abstract}

\paragraph{Keywords:}
wave equation, unique continuation property, stability,  analysis on manifolds, optimal control time.
\paragraph{Mathematical Subject Classification 2010.}
Primary: 35B53, 35B60, 35L05, 58J45, 93B05.\; Secondary: 35R30, 31B20, 58E25.

\section{Introduction}
We consider the wave operator in $\R^{n+1}$,
\begin{eqnarray}\label{wave_op}
P(y,D) = -D_0^2+ \sum_{j,k=1}^n g^{jk}(x)  D_j D_k +  \sum_{j=1}^n h^j(x) D_j + q(x),
\end{eqnarray}
where  $y=(t,x)\in \R \times \R^{n}$ are the time-space variables, $D_0 = -i \partial_{t}$, $D_j = -i \partial_{x_j}$.
The  coefficients $g^{jk} \in C^1(\R^n)$ are real and independent of time, and  $[g^{jk}]$ is a symmetric positive-definite matrix.
The  coefficients $h^j, q\in C^{0}(\R^n)$
are complex valued and independent of time.
\\
An operator  $P(y,D)$ is said to have the unique continuation property if
 for any solution $u$ to $Pu=0$ in a connected open set $\Omega \subset \R^{n+1}$ and vanishing on an open subset $B\subset \Omega$, it follows that u vanishes in $\Omega$.\\
 In the paper \cite{T1} Tataru  proved for the first time the unique continuation property for \eqref{wave_op} across every non-characteristic $C^2$-hypersurface with no limitation to the normal direction.
The key point of these results is a Carleman-type estimate involving an exponential pseudo-differential operator.
\\
Much is known about the consequences of the general unique continuation property for the corresponding Cauchy problem.
Actually  the unique continuation property has proved to be instructive in many areas of mathematics, e.g. in studying the uniqueness for linear and nonlinear PDEs together with their blow up or traveling wave solutions \cite{E}, in studying  the Anderson localization  \cite{BK},
in control theory to get controllability results \cite{T3,T4}, in inverse problems to obtain uniqueness and stability estimates \cite{I}.
In particular Tataru's result \cite{T1} is crucial for the development of the Boundary Control method (see \cite{Be} for pioneering work and \cite{KKL} for detailed exposition of the further developments).
\\
Concerning the continuous dependence of the unique continuation property, that is its stability, less results are available. The elliptic and the parabolic cases have been studied in several settings by using either Carleman estimates or some versions of the three ball theorem (see \cite{A}, for a review of the results).
\\
To our knowledge the hyperbolic case like \eqref{wave_op} is still open for arbitrary domains and arbitrary matrix valued coefficients $g^{jk}(x)$, while there exist results for particular coefficients or domains (see \cite{R,V}).
This is maybe related to the difficulty of using the standard Carleman estimates for hyperbolic operators in order to prove the unique continuation close to the characteristic directions, that is the reason why Tataru's work was so important in this field.
\\
The aim of the present work is then to prove  a global stability estimate for the unique continuation of
the operator $P(y,D)$.\\
In a previous work \cite{B} we proved this property for the local case.
Namely, given $S=\{y\in \Omega;\psi(y)=0\}$  a $C^{2,\rho}$-smooth oriented hypersurface, which is non-characteristic
in $\Omega$, for some fixed $\rho\in(0,1)$,
we assume that $u \in H^1(\Omega)$ is supported in $\{y; \psi(y) \le  0\}\cap \Omega$, and $P(y,D)u \in L^2(\Omega)$.
Then, for each $y_0 \in S$,  with $\psi'(y_0)\neq 0$,
we find
$R,r$ with $R \ge 2r >0$
such that the following stability estimate holds:
\begin{eqnarray*}
\|u\|_{L^{2}(B(y_0,r))} \le c_{111} \frac{\|u\|_{H^1(B(y_0,2R))}}{\ln \Big(1 + \frac{\|u\|_{H^1(B(y_0,2R))}}{\|Pu\|_{L^2(B(y_0,2R))}}\Big)}\,.
\end{eqnarray*}
Here $B(y_0,r)$ is a ball in $\R^{n+1}$ of radius $r>0$ centered in $y_0$ and $B(y_0,r)\subset B(y_0,2R)\subset \Omega$.
The radii $r$ and $R$ and the coefficient $c_{111}$ have been explicitly calculated with dependency on the geometric parameters and on the function $\psi$ in \cite{B}.
\\
In this work we use the previous local stability inequality to prove a similar logarithmic estimate
for quite general domains of $\R^{n+1}$.\\
Moreover we propose a procedure to calculate all the constants involved, dependent on the norms of $\psi$, the coefficients in \eqref{wave_op}, the properties of the domains and the smooth localizers. The procedure is described in Proposition \ref{prop} and Appendix A.
\\
Concerning the proof, in the unpublished manuscript \cite{T}, Tataru suggested the possibility of obtaining a log-stability result,
by splitting the estimate for high and low temporal frequencies and by using Gevrey-class localizers to improve the estimates of $u$ for low temporal frequencies. Here and in \cite{B} we have advanced that idea,
by employing tools of subharmonic functions and proper choice of the localizers in the iterating procedure, together with the explicit computations of the uniform radii $r,R$ and the time frequencies used in the iteration.
Of fundamental importance is the calculation of the positive lower bound of the radius $r$, without which the iterative procedure could stop before covering the desired subdomain of $\Omega$.\\
The technique used consists in iterating the local stability result, but considering the low temporal frequencies separately from the  high temporal frequencies.
The advantage is that one can avoid the usual $(\ln\ln... |\ln \|Pu\|_{L^2}|)^{-\theta}$ iterated estimate (for $\|u\|_{H^1}=1$ and $\|Pu\|_{L^2} << 1$, $\theta \in (0,1)$) and get a $(|\ln \|Pu\|_{L^2}|)^{-\theta}$ results.
As a consequence one obtains a stable control of the solution $u$ inside $\Omega$, for any positive time. Moreover, we can come as close as we wish to the optimal time of the control $T_{opt}$, i.e. the time to reach the uniqueness in  $\Omega$ (see Corollary \ref{cor2}, as example of computation).
The importance of this issue has also been underlined in \cite{R}, who worked with FBI transform technique to get a log-stability estimate for large times. Hoermander in \cite{H0} proved an upper bound of the type $\sqrt{27/23}T_{opt}$. The issue of reaching $T_{opt}$ for \eqref{wave_op} has been solved in \cite{T1}, see also \cite{H,RZ}. Here we can derive the stable determination of it.
\\
Like in the elliptic case, many possible applications can be derived out of it.
In particular we plan to use the inequalities in Theorem \ref{global}-\ref{global2} to obtain an explicit modulus
of continuity for the
inverse problem for the wave operator on manifolds. This would improve  the existing inverse
stability results for Riemannian  manifolds, which are currently based either on compactness-type arguments, see \cite{AKKLT, KLY}, or on very strong geometrical conditions for the coefficients, e.g.\ in \cite{EINT,IIY,IIY2}.
Here is important to be able to relate the explicit estimates with some geometric invariant of the manifold (Ricci curvature, sectional curvature, diameter, etc.).
\\
As application, in section \ref{sec3} we apply Theorem \ref{global2} to the case of an arbitrary domain of influence in $\R^{n+1}$.
This is a special case of manifold, once one considers $g^{jk}$ as the inverse of the metric tensor.
We start with a time-cylinder where the wave solution vanishes (or has small data) and we get the stability in any compact subset of the associated domain of influence at time $T$.
The control of solution in a stable way in the domain of influence can have numerous important applications in inverse problems and in in control theory.
Here we consider also the case in which
the ray field has also singularities, i.e. behind the corresponding cut-locus. This means that in principle we are able to deal with manifolds that possesses conjugate points, trapped rays and other singularities of geodesics.
Thus, we remove the usual non-trapping conditions used in the Carleman estimates.
\\
The paper is organized as follows: in Section \ref{sec2} we prove Theorems \ref{global} and \ref{global2}, in Section \ref{sec3} we present the application to the case of a domain of influence of the wave solution vanishing in a small cylinder. In Appendix A we present the table with the estimates for the parameters used in Sec. \ref{sec2} and
we study the uniform estimates for the distance function $d_g$ and the related function $\psi$ defined in Sec. \ref{sec3}.
\\\\
We first introduce some assumptions.

\begin{paragraph}{\bf Assumption A1}
Let $\Omega$ be a connected open subset of $\R\times\R^n$.
Let $P(y,D)$ be the wave operator \eqref{wave_op}, with $g^{jk}(x) \in C^1(\Omega),$ $h^j, q \in C^{0}(\Omega)$.
We assume that $u \in H^1(\Omega)$
and that $P(y,D)u \in L^2(\Omega)$.
Assume that there is a function $\psi \in C^{2,\rho}(\Omega)$, for some  $\rho\in(0,1)$,  such that in a domain $\Omega_0 \subseteq \Omega$ one has $p(y,\psi'(y))\neq 0$ and $\psi'(y)\neq 0$, where $p(y,\xi)= -\xi_0^2 + g^{jk}(x) \xi_j\xi_k$ is the principal symbol of $P$.
\\
Assume that there exist values $\psi_{min} < \psi_{max}$ and a connected nonempty set $\Upsilon \subset \Omega_0$ such that:
$\supp(u)\cap \Upsilon = \emptyset$;\quad
and $\emptyset \neq  \{y\in \Omega_0; \psi(y) >  \psi_{max}\} \subset \Upsilon$\quad
(which implies that $\Omega_0$ contains a subset $\Upsilon$ where $u$ vanishes, and that the value $\psi_{max}$
is obtained for points inside the domain $\Omega_0$).\\
Assume that $\psi_{min}$ is such that the open set \;
$\Omega_a = \{y \in \Omega_0 \backslash \overline{\Upsilon}: \psi_{min} < \psi(y) < \psi_{max} \} $ is nonempty, connected and satisfies
$\dist(\partial \Omega_0, \Omega_a) >0$.
\end{paragraph}
\\\\
See remark \ref{remarks} for comments about the construction.

\begin{paragraph}{\bf Assumption A2}
We define $A(D_0)$ to be a pseudo-differential operator with symbol $a(\xi_0)$, $0\le a\le 1$, where
$a \in C^\infty_0(\R)$ is a smooth localizer supported in $|\xi_0|\le 2$ , equal to one in $|\xi_0|\le 1$.
Furthermore let $a\in G^{1/\alpha}_0(\R)$ for a fixed $\alpha \in(0,1)$.
Here $G^{1/\alpha}_0$ is the set of Gevrey functions of class $1/\alpha$ with compact support, defined in \cite{H1, R}. We also define the smooth localizer $b(y)$, supported in $|y|\le 2$, $0\le b\le 1$ and equal to one in $|y|\le 1$.
\end{paragraph}
\\\\
The main results of the paper are the following Theorems \ref{global} and \ref{global2}, together with their application in Section \ref{sec3} Theorem  \ref{global2 application}.
\\
\begin{theorem}\label{global}
Under the conditions of Assumption A1-A2,
define the open set $\Omega_1 = \Omega_0 \backslash \overline{\Upsilon}$ containing $\Omega_a$.
Then for every $0<\theta < 1$ we have
\begin{eqnarray*}
 \|u\|_{L^{2}(\Omega_a)} \le c_{160} \frac{\|u\|_{H^{1}(\Omega_1)}}{\Big(\ln\Big(1+\frac{\|u\|_{H^{1}(\Omega_1)}}{\|Pu\|_{L^{2}(\Omega_1)}}\Big)\Big)^{\theta}}.
\end{eqnarray*}
Moreover, for any $m \in (0,1]$ we get
\begin{eqnarray*}
\|u\|_{H^{1-m}(\Omega_a)} \le c_{160}^m \frac{\|u\|_{H^1(\Omega_1)}}{\Big(\ln \Big(1 + \frac{\|u\|_{H^1(\Omega_1)}}{\|Pu\|_{L^2(\Omega_1)}}\Big)\Big)^{m \theta}}\,.
\end{eqnarray*}
The constant $c_{160}$ is calculated in the proof.
\end{theorem}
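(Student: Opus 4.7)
The plan is to iterate the local stability estimate of \cite{B} along a finite chain of balls whose union covers $\Omega_a$, combined with a frequency cutoff in the time variable (via the Gevrey-class localizer $A(D_0)$) that prevents the usual cascade of iterated logarithms that a naive iteration would produce.

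\textbf{Step 1: geometric chain.} Using the uniform gap $\dist(\partial\Omega_0,\Omega_a)>0$, the non-characteristic condition $p(y,\psi'(y))\neq 0$ and compactness, I would build a finite sequence of points $y_0,y_1,\dots,y_N\in\Omega_1$ with radii $r_k\le R_k$ uniformly bounded from below, such that $B(y_0,r_0)\subset \Upsilon$ (so $u\equiv 0$ there), consecutive balls $B(y_k,r_k)\cap B(y_{k+1},r_{k+1})\neq \emptyset$, each outer ball $B(y_k,2R_k)\subset \Omega_1$, and $\bigcup_k B(y_k,r_k)\supset \Omega_a$. At each $y_k$ the local estimate is applied to the shifted function $\psi-c_k$ for a descending sequence $\psi_{max}=c_0>c_1>\cdots>c_N=\psi_{min}$. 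The authors stress that the uniform positive lower bound on the $r_k$ is critical: otherwise $N$ could be infinite and the propagation would not reach $\Omega_a$.

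\textbf{Step 2: frequency splitting.} Since the coefficients of $P$ are independent of $t$, the Fourier multiplier $A_\tau:=A(\tau^{-1}D_0)$ from Assumption A2 commutes with $P$, so $P(A_\tau u)=A_\tau Pu$ and likewise for $I-A_\tau$. Writing
\begin{equation*}
\|u\|_{L^2(\Omega_a)}\le \|A_\tau u\|_{L^2(\Omega_a)}+\|(I-A_\tau)u\|_{L^2(\Omega_a)},
\end{equation*}
the high-frequency piece is bounded by $\tau^{-1}\|u\|_{H^1(\Omega_1)}$ by Plancherel, while the low-frequency piece $A_\tau u$ will be propagated through the chain by the local estimate. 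Since $a\in G^{1/\alpha}_0$, Tataru's pseudo-differential Carleman weight applied to $A_\tau u$ produces constants growing only like $e^{C\tau^\alpha}$ per step rather than stacking a new logarithm, and after $N$ iterations one obtains
\begin{equation*}
\|A_\tau u\|_{L^2(\Omega_a)}\le C\, e^{C\tau^\alpha}\,\|u\|_{H^1(\Omega_1)}^{1-\sigma}\,\|Pu\|_{L^2(\Omega_1)}^{\sigma}
\end{equation*}
for some explicit $\sigma\in(0,1)$ depending only on $N$ and the chain.

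\textbf{Step 3: optimization and interpolation.} Balancing $\tau^{-1}$ against $e^{C\tau^\alpha}$ by taking $\tau$ comparable to a suitable power of $\ln(1+\|u\|_{H^1(\Omega_1)}/\|Pu\|_{L^2(\Omega_1)})$, and exploiting that $\alpha\in(0,1)$ is at our disposal, yields the desired $(\ln(\cdots))^{-\theta}$ inequality for any $\theta<1$. The $H^{1-m}$ statement then follows from the Sobolev interpolation $\|v\|_{H^{1-m}}\le \|v\|_{L^2}^m\|v\|_{H^1}^{1-m}$ applied to the $L^2$ bound just proved and the a priori estimate $\|u\|_{H^1(\Omega_a)}\le \|u\|_{H^1(\Omega_1)}$. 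Bookkeeping all the constants along the chain produces the explicit value of $c_{160}$.

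\textbf{Main obstacle.} The delicate step is the low-frequency iteration in Step 2: each application of a logarithmic stability estimate would ordinarily stack another $\ln$, and the Gevrey-class frequency splitting is precisely the device that collapses this cascade into a single logarithm at the price of the exponent $\theta<1$. Making this work requires both (i) uniform lower bounds on the radii $r_k,R_k$ along the entire chain — which in turn demands uniform control on $|\psi'|$ and on the non-characteristic defect of $\psi$ along the descending level sets, precisely the content of Proposition \ref{prop} and Appendix A — and (ii) careful tracking of how the constants from successive applications of Tataru's inequality on $A_\tau u$ combine into the single bound $e^{C\tau^\alpha}$ independently of $N$.
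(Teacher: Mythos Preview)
Your high-level strategy—split into low and high temporal frequencies and iterate the local estimate along a finite chain of balls—is indeed the paper's approach. But Step~2 has a genuine gap, both in the support-condition bookkeeping and, more importantly, in the form of the iterated low-frequency bound.

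The assertion that $A_\tau$ commutes with $P$ is correct but not directly usable: the local estimate (Lemma~\ref{lm_expo1}) requires the support hypothesis $\supp(v)\cap B_{2R}(y_j)\subset\{\psi\le\psi(y_j)\}$, which $A_\tau u$ never satisfies since it is not compactly supported. The paper instead defines $u_j=\prod_{k<j}(1-b_k)u$ and chooses $y_j$ as an argmax of $\psi$ on the remaining region (Assumption~A4), so that $u_j$ itself meets the support condition; the frequency cutoff is applied \emph{after} the spatial cutoff $b((y-y_j)/r)$, and the resulting commutator errors are absorbed via Lemma~\ref{lm_util}, which gives $e^{-c\mu^\alpha}$ tails thanks to the Gevrey regularity.

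More seriously, your displayed H\"older-type bound $\|A_\tau u\|\le C e^{C\tau^\alpha}\|u\|^{1-\sigma}\|Pu\|^\sigma$ is not what the iteration produces, and would in fact be too strong: optimizing it would give exponent $1/\alpha>1$, better than the theorem claims. The actual local step (Lemma~\ref{lm_expo1}) reads: if $\|A(D_0/\mu)\,b((y-y_j)/R)\, Pu_j\|\le c_A e^{-\mu^\alpha}$, then $\|A(D_0/\omega)\,b((y-y_j)/r)\, u_j\|\le c\, e^{-c\mu^{\alpha^2}}$ for all $\omega\le\mu^\alpha/(3c_{131})$. Each step therefore forces the frequency parameter to drop from $\mu$ to $\sim\mu^\alpha$ and costs one power of $\alpha$ in the decay exponent. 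After $N$ steps (Theorem~\ref{iter}) with $\mu_j=c_{156}\mu_{j-1}^\alpha$ one has $\mu_N\ge c_{156}^{1/(1-\alpha)}\mu^{\alpha^{N-1}}$; setting $\|Pu\|=e^{-\mu}$ and $\omega=\mu_N^\alpha/(3c_{131})$ then gives
\[
\|u\|_{L^2(\Omega_a)}\le H_1+H_2\le N c\, e^{-c\mu_N^{\alpha^2}}+\frac{C}{\mu_N^\alpha}\le \frac{c_{158}}{\mu^{\alpha^N}},
\]
after which one picks $\alpha=\theta^{1/N}$ so that $\alpha^N=\theta$. The deterioration of the exponent with $N$ is intrinsic to the method and is precisely why only $\theta<1$ is reached; your scheme hides this loss, so the balancing in Step~3 as written does not yield the stated inequality.
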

The dependency of the constant $c_{160}$ from the geometric parameters of the problems and from $\psi$ and $\theta$ is described in Proposition \ref{prop}.

\begin{paragraph}{\bf Assumption A3}
Let $\Omega$ be a connected open subset of $\R\times\R^n$.
Let $P(y,D)$ be the wave operator \eqref{wave_op}, with $g^{jk}(x) \in C^1(\Omega),$ $h^j, q \in C^{0}(\Omega)$.
Let $u \in H^1(\Omega)$ and  $P(y,D)u \in L^2(\Omega)$.\\
In $\Omega$ we assume the existence of open connected subsets $\Lambda_k, \,\Omega_{0,k}$, a connected set $\Upsilon$ and functions $\psi_k$ for $k=1,2,\dots,K$ defined in this way:\\
1. $\psi_k  \in C^{2,\rho}(\Omega)$
for some $\rho\in(0,1)$,  such that $p(y,\psi_k'(y))\neq 0$ and $\psi_k'(y)\neq 0 $ in $\Omega_{0,k}$,  where $p(y,\xi)= -\xi_0^2 + g^{jk}(x) \xi_j\xi_k$ is the principal symbol of $P$.
\\
2.
Assume that:
$\supp(u)\cap \Upsilon = \emptyset$.
Define $\Upsilon_1 = \Omega_{0,1}\cap \Upsilon$ and for $k\ge 2$ set $\Upsilon_k =  \Omega_{0,k} \cap \Big(\bigcup_{j<k} \Lambda_j \cup \Upsilon \Big)$.
\\
Assume that there exist values $\psi_{min,k} < \psi_{max,k}$
such that:\quad
$\Big((\supp(u)\cap \Omega_{0,k})\setminus \bigcup_{j<k} \Lambda_j\Big) \subset \{y; \psi_k(y) \le  \psi_{max,k}\}$;\quad
and
$\emptyset \neq  \{y\in \Omega_{0,k}; \psi_k(y) >  \psi_{max,k}\} \subset \Upsilon_k$.
\\
3.
Assume that $\psi_{min,k}$ is such that\;
$\Lambda_k = \{y \in \Big(\Omega_{0,k}\setminus \overline{\Upsilon_k} \Big);\; \psi_{min,k} < \psi_k(y) < \psi_{max,k} \}$ is nonempty, connected and satisfies
$\dist(\partial \Omega_{0,k}, \Lambda_k) >0$.\\
4. Assume that $\Lambda=\cup_{k=1}^K \Lambda_k$ is a connected set.
\\
\end{paragraph}

\begin{theorem}\label{global2}
Under the conditions of Assumptions A2-A3,
define the open set
$\Omega_1 = \bigcup_{k=1}^K \Omega_{0,k} \backslash \overline{\Upsilon}$ containing $\Lambda$.
Then for every $0<\theta < 1$ we have
\begin{eqnarray*}
 \|u\|_{L^{2}(\Lambda)} \le c_{161} \frac{\|u\|_{H^{1}(\Omega_1)}}{\Big(\ln\Big(1+\frac{\|u\|_{H^{1}(\Omega_1)}}{\|Pu\|_{L^{2}(\Omega_1)}}\Big)\Big)^{\theta}}.
\end{eqnarray*}
Moreover, for any $m \in (0,1]$ we get
\begin{eqnarray*}
\|u\|_{H^{1-m}(\Lambda)} \le c_{161}^m \frac{\|u\|_{H^1(\Omega_1)}}{\Big(\ln \Big(1 + \frac{\|u\|_{H^1(\Omega_1)}}{\|Pu\|_{L^2(\Omega_1)}}\Big)\Big)^{m \theta}}\,.
\end{eqnarray*}
The constant $c_{161}$ is calculated in the proof.
\end{theorem}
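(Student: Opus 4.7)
The plan is to prove Theorem~\ref{global2} by induction on $k=1,\ldots,K$, applying Theorem~\ref{global} at each step with the triple $(\Omega_{0,k},\psi_k,\Upsilon_k)$ supplied by Assumption~A3, and then summing the $K$ resulting patchwise estimates to obtain the bound on $\Lambda=\bigcup_{k=1}^K\Lambda_k$. The inductive hypothesis at step $k$ is that both the claimed $L^2$- and $H^{1-m}$-logarithmic estimates already hold on $\bigcup_{j<k}\Lambda_j$.

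For $k=1$ the reduction to Theorem~\ref{global} is immediate: $\Upsilon_1=\Omega_{0,1}\cap\Upsilon$, and since $\supp(u)\cap\Upsilon=\emptyset$, the hypotheses (1)--(3) of Assumption~A3 restricted to $k=1$ coincide with Assumption~A1 for $(\Omega_{0,1},\psi_1,\Upsilon_1)$; Theorem~\ref{global} then delivers the claim on $\Lambda_1$.

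For the inductive step $k\ge 2$ the complication is that $\Upsilon_k$ contains the previously treated patches $\bigcup_{j<k}\Lambda_j$, on which $u$ does not vanish but is only small by the inductive hypothesis. I would reduce to the hypotheses of Theorem~\ref{global} by a cutoff: choose $\chi_k\in C^\infty_0(\Omega_{0,k})$ equal to $1$ on a neighborhood of $\Lambda_k$ and equal to $0$ on a neighborhood of $(\overline{\Upsilon_k}\setminus\Upsilon)\cap\Omega_{0,k}$, which is possible because $\Lambda_k\subset\Omega_{0,k}\setminus\overline{\Upsilon_k}$ and $\dist(\partial\Omega_{0,k},\Lambda_k)>0$. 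Set $v_k=\chi_k u$. Then $v_k$ vanishes on a neighborhood of $\Upsilon_k$ (combining $\chi_k=0$ near $\bigcup_{j<k}\Lambda_j\cap\Omega_{0,k}$ with $u=0$ on $\Upsilon$), so $v_k$ satisfies Assumption~A1 with $(\Omega_{0,k},\psi_k,\Upsilon_k)$. Applying Theorem~\ref{global} to $v_k$, together with
\[
Pv_k=\chi_k\,Pu+[P,\chi_k]u,
\]
and the identity $v_k\equiv u$ on $\Lambda_k$, gives
\[
\|u\|_{L^2(\Lambda_k)}\;\le\; c\,\frac{\|u\|_{H^1(\Omega_1)}}{\bigl(\ln\bigl(1+\|u\|_{H^1(\Omega_1)}/(\|Pu\|_{L^2(\Omega_1)}+E_k)\bigr)\bigr)^{\theta'}},
\]
where $E_k$ bounds $\|[P,\chi_k]u\|_{L^2}$ by a first-order expression in $u$ supported in the transition annulus $W_k\subset\bigcup_{j<k}\Lambda_j$, and $\theta'<1$ is freely prescribed.

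The main obstacle is controlling the error $E_k$: the inductive hypothesis supplies $L^2$- and $H^{1-m}$-smallness on $\bigcup_{j<k}\Lambda_j$, but $E_k$ involves the full $H^1$-norm of $u$ on the transition region. I would handle this by arranging the cutoffs in nested layers so that $W_k$ lies strictly inside a region on which a finer $H^{1-m}$-bound from step $k-1$ is already available with room to spare, and then combining that with the ambient bound $\|u\|_{H^1(\Omega_1)}\le M$ via an interpolation/mollification argument on scale equal to the layer separation, converting the $H^{1-m}$-smallness into quantitative $H^1$-smallness at the cost of worsening the logarithmic exponent by a factor $\alpha\in(0,1)$ depending only on $m$. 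Since $K$ is finite and Theorem~\ref{global} allows any exponent strictly below $1$, one starts the induction with $\theta_0$ close enough to $1$ so that after $K$ such degradations the final exponent still exceeds the prescribed $\theta$; the corresponding $H^{1-m}$-statement then follows from the $L^2$-statement by interpolation with the a priori $H^1$-bound, exactly as in Theorem~\ref{global}. The delicate point is precisely this bookkeeping of nested cutoffs, interpolation exponents, and cumulative constants, the latter being accumulated into the final $c_{161}$.
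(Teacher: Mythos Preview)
Your approach has a genuine gap at the step where you propose to control the commutator error $E_k=\|[P,\chi_k]u\|_{L^2}$. This term requires the full $H^1$-norm of $u$ on the transition region $W_k$, and the inductive hypothesis only supplies $H^{1-m}$-smallness with $m>0$. Your proposed fix---to upgrade $H^{1-m}$-smallness to $H^1$-smallness by interpolation or mollification against the ambient bound $\|u\|_{H^1}\le M$---does not work: interpolation between $H^{1-m}$ and $H^1$ lands strictly below $H^1$, and one cannot reach $H^1$ without control at some $H^{1+s}$, which is unavailable. (A quick test: $u=\sin(Nx)$ has $\|u\|_{H^{1-m}}\sim N^{1-m}$ and $\|u\|_{H^1}\sim N$; no combination of these yields $\|u\|_{H^1(W')}=o(N)$.) Without this, plugging $E_k$ back into the logarithmic estimate produces an iterated logarithm $(\ln\ln\cdots)^{-\theta}$ after $K$ steps, which is exactly the loss the theorem is designed to avoid.

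The paper circumvents this by \emph{not} iterating Theorem~\ref{global} as a black box. Instead it descends to the level of Theorem~\ref{iter}: one fixes common radii $r,R$ over all $\Omega_{0,k}$, builds a single sequence of points $y_j$ running through all the patches $\Lambda_1,\ldots,\Lambda_K$ in order, and defines $u_j=\prod_{i<j}(1-b_i)u$ as in \eqref{uj}. The crucial point is that Lemma~\ref{lm_expo1} delivers an $H^1$ (not merely $L^2$) exponential bound on the \emph{low-temporal-frequency} localization $A(D_0/\omega)b((y-y_{j-1})/r)u_{j-1}$, and it is precisely this $H^1$ control that absorbs the commutator $[P,b_{j-1}]u_{j-1}$ at the next step (see the treatment of $I_1,I_2,I_3$ in the proof of Theorem~\ref{iter}). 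The high-temporal-frequency remainder is handled separately by the Gevrey localizer estimate of Lemma~\ref{lm_util}. Only after all $N=\sum_k N_k$ steps are completed does one recombine low and high frequencies to obtain the single logarithm with exponent $\alpha^N$, and then choose $\alpha=\theta^{1/N}$. In short: the iteration must be carried out on the frequency-localized exponential estimates, not on the already-assembled logarithmic estimate, because only the former retains the $H^1$ information needed to chain across patches.
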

The dependency of the constant $c_{161}$ from the geometric parameters of the problems and from $\psi$ and $\theta$ is described in Proposition \ref{prop}.

\section{Global Stability}\label{sec2}

\noindent {\bf Notations.} We start by introducing some notations and definitions used in the rest of the article:
first we consider $y=(t,x) \in \R \times \R^{n}$ the time-space variable and call $\xi=(\xi_0,\tilde\xi)$ its Fourier dual variable.
We remind that the exponential
pseudodifferential operator in Theorem \ref{th_carleman} is defined as
$e^{-\epsilon |D_0|^2/2\tau} v= \mathcal{F}^{-1}_{\xi_0 \to t}e^{-\epsilon \xi_0^2/2\tau}\mathcal{F}_{t'\to\xi_0}v$, with $\mathcal{F}$ and $\mathcal{F}^{-1}$ representing respectively the Fourier transform and its inverse. Then $e^{-\epsilon |D_0|^2/2\tau}$ is an integral operator with
kernel \quad $(\frac{\tau}{2\pi\epsilon})^{1/2}e^{-\tau|t'-t|^2/2 \epsilon }$.
We consider a pseudo-differential operator $A(D_0)$  with symbol $a(\xi_0) \in C^\infty_0(\R)$, $0\le a\le 1$,
supported in $|\xi_0|\le 2$ and equal to one in $|\xi_0|\le 1$.
Hence we can write $A(\beta |D_0|/\omega)v = \mathcal{F}^{-1}_{\xi_0 \to t}a(\beta |\xi_0|/\omega)\mathcal{F}_{t'\to\xi_0}v$ and the integral kernel is
 $(\frac{\omega}{2\pi\beta})^{1/2}{\hat a}(\frac{\omega|t'-t|}{\beta})$.
We will often work under the Assumption A2, where the symbol $a$ is of Gevrey class. The smooth localizer $b(y)$ is always supported in $|y|\le 2$ and equal to one in $|y|\le 1$.\\
The norm of the Sobolev space $H_{\tau}^s$ is defined as $\|u\|_{s,\tau} = \|(|\xi|^2+\tau^2)^{s/2}\mathcal{F}_{y \to \xi}u\|_{L^2}$, and the space $H^s$ corresponds to the case $\tau=1$.\\
According to our notations the positive coefficients denoted by $c_x$ with $x\ge 100$ are defined just once, independently on the variables $\mu, \tau$, and they are calculated explicitly in terms of the coefficients of the operator \eqref{wave_op} and the geometric parameters.  This is essential to finally recover the value of $c_{160}$ and the radii $R, r$ in Table \ref{tab1}.
\\
\\
\noindent
A first step is the following lemma, proven in \cite{B}, introducing a property often used in this section.
\begin{lemma}\label{lm_util}
Let $A(D_0)$ be a pseudo-differential operator with symbol $a(\xi_0)$, where
$a \in C^\infty_0(\R)$ is a smooth localizer supported in $|\xi_0|\le 2$ and equal one in $|\xi_0|\le 1$. Assume that
$ f(y) \in C^{\infty}_0(\R^{n}_x, G^{1/\alpha}_0(\R^1_t))$, where $0<\alpha<1$.
Then, for every $\mu > 0$, $\beta_1 > 2$, $v \in L^2_{loc}(\R^{n+1})$ there are two constants $c_{106},\, c_{107}$ independent of $\mu$ such that
\begin{eqnarray*}
a)&\|A(\beta_1 D_0/\mu)f(y)(1-A(D_0/\mu)) v\|_0 \le c_{107} e^{-c_{106} \mu^\alpha}\|v\|_0\,.
\end{eqnarray*}
Moreover, if $v \in L^2(\R^{n+1})$ and $h\in C^\infty_0(\R^{n+1})$ is a function such that $h \equiv 1$ on supp$(f)$, then
\begin{eqnarray*}
b)&\|A(\beta_1 D_0/\mu)f h v\|_0 \le \|f\|_\infty \|A(D_0/\mu)h(y) v\|_0 + c_{107} e^{-c_{106} \mu^\alpha}\|h v\|_0\, .
\end{eqnarray*}
If $v \in H^m_{loc}(\R^{n+1})$, $m\ge 1$,  then the estimate above holds also in $H^m(\R^{n+1})$,
\begin{eqnarray*}
c)&
\|A(\beta_1 D_0/\mu)f (1-A(D_0/\mu))v\|_m \le c_{108} e^{-c_{106} \mu^\alpha}\|v\|_m\,.
\end{eqnarray*}
\end{lemma}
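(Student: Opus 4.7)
The plan is to exploit the frequency separation built into the composition together with the exponential Fourier decay of Gevrey-class functions. For part (a), I would pass to the Fourier side in the time variable: the symbol $a(\beta_1\xi_0/\mu)$ of $A(\beta_1 D_0/\mu)$ is supported in $\{|\xi_0|\le 2\mu/\beta_1\}$ while $1-a(\eta_0/\mu)$ is supported in $\{|\eta_0|\ge \mu\}$. Since $\beta_1>2$, these two supports are separated with gap at least $\mu(1-2/\beta_1)>0$. Writing multiplication by $f$ as convolution by $\hat f$ in Fourier, the composition is an operator whose Fourier-side kernel $K(\xi,\eta)=a(\beta_1\xi_0/\mu)\,\hat f(\xi-\eta)\,(1-a(\eta_0/\mu))$ is supported on $\{|\xi_0-\eta_0|\ge \mu(1-2/\beta_1)\}$.

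The second step uses the Gevrey hypothesis: for $f\in C^\infty_0(\R^n_x;G^{1/\alpha}_0(\R_t))$ one has the standard Gevrey Fourier estimate $|\hat f(\zeta_0,\tilde\zeta)|\le C_N(1+|\tilde\zeta|)^{-N}\,e^{-c|\zeta_0|^\alpha}$ for every $N$, with $c>0$ determined by the Gevrey seminorms of $f$. On the support of $K$ this yields a pointwise bound $C_N(1+|\tilde\xi-\tilde\eta|)^{-N}\,e^{-c(1-2/\beta_1)^\alpha\mu^\alpha/2}\,e^{-c|\xi_0-\eta_0|^\alpha/2}$, where I split a half-factor off to preserve integrability in $\xi_0-\eta_0$. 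Applying Schur's lemma to the remaining convolution-type kernel then bounds the $L^2\to L^2$ operator norm by $c_{107}\,e^{-c_{106}\mu^\alpha}$, with $c_{106},c_{107}$ depending on $\beta_1,\alpha$ and the Gevrey constants of $a$ and $f$ but not on $\mu$; this is precisely (a).

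For (b), I would decompose $hv=A(D_0/\mu)hv+(1-A(D_0/\mu))hv$. Since $h\equiv 1$ on $\supp f$ one has $fh=f$, so we are free to insert $h$ between $f$ and $v$. The low-frequency piece gives $\|A(\beta_1 D_0/\mu)\,f\,A(D_0/\mu)hv\|_0\le \|f\|_\infty\|A(D_0/\mu)hv\|_0$, using Plancherel and $0\le a\le 1$ to bound $A(\beta_1 D_0/\mu)$ by one in $L^2$ and the trivial pointwise bound for multiplication by $f$. The high-frequency piece is controlled by (a) applied to $hv$ in place of $v$, yielding the exponential remainder $c_{107}e^{-c_{106}\mu^\alpha}\|hv\|_0$. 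For (c), the same decomposition and kernel estimate go through in $H^m$ after noting that Fourier multipliers in $D_0$ commute with $(1+|D|^2)^{m/2}$ and multiplication by $f$ is bounded on $H^m$ with norm controlled by $C_m\|f\|_{C^m}$; the constant $c_{108}$ absorbs the $C^m$-norms of $f$.

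The main obstacle is the quantitative Gevrey Fourier-decay estimate: one must justify $|\hat f(\zeta_0,\cdot)|\le C\,e^{-c|\zeta_0|^\alpha}$ with an explicit $c>0$ determined by the seminorms $\sup_k\|\partial_t^k f\|_\infty/(k!)^{1/\alpha}$, which comes from integrating by parts $k$ times, minimizing over $k$, and using Stirling to extract the $|\zeta_0|^\alpha$ exponent. Once this bound is in place, the remaining analytic content is bookkeeping: tracking the gap factor $(1-2/\beta_1)^\alpha$ (which deteriorates as $\beta_1\searrow 2$) and keeping the Schur integrals finite via the half-factor splitting.
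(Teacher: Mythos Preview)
Your proof is correct and follows essentially the same approach as the paper: frequency separation on the support of $a(\beta_1\xi_0/\mu)(1-a(\eta_0/\mu))$, the Gevrey Fourier-decay bound $|\hat f(\zeta_0,\cdot)|\le Ce^{-c|\zeta_0|^\alpha}$, and the half-factor splitting to keep an integrable remainder. The only cosmetic differences are that the paper works with the partial Fourier transform in $t$ (keeping $x$ as a parameter, which is why $\hbox{Vol}(\supp f)$ appears in their $c_3$) rather than the full transform plus Schur, and that the constants do not involve any Gevrey data of $a$---only $\|a\|_\infty\le 1$ is used.
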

\begin{proof}
See \cite{B} for the entire proof. Here we remind how to obtain the coefficients.\\
$a)$
On the set supp$[(1-a( \xi_0/\mu)) a(\beta_1 \xi^{1}_0/\mu)]$ one obtains $|\xi^{1}_0- \xi_0|^\alpha \ge (\mu - 2\mu/\beta_1)^{\alpha}$ and  the assumption $f(t,.) \in G^{1/\alpha}_0(\R_t)$ implies, uniformly in $x$ on a compact set $K\subset \R^n$ and
for some $c_3=c_3(\alpha, K)$, $c_{117}=c_{117}(\alpha,K)$ and $c_{106}=c_{117}(1 - 2/\beta_1)^{\alpha}/4$,
\begin{eqnarray}\label{fourier}
|\mathcal{F}_{t' \to (\xi^{1}_0- \xi_0)}[ f(t',x)]| \le c_3 e^{-c_{117}|\xi^{1}_0- \xi_0|^\alpha}
\le c_3 e^{-2c_{106} \mu^\alpha}e^{-c_{117}|\xi^{1}_0- \xi_0|^\alpha/2}.
\end{eqnarray}
The coefficient $c_3=c_3(\alpha, K)$ is proportional to $c_{1,f}$, the Gevrey parameter of $f$, that is \cite{H1,RD}
$$|D^\kappa (f(s))| \le c_{1,f}^{|\kappa|+1} (|\kappa|+1)^{|\kappa|/\alpha}, \; s \in \mbox{supp}(f).$$
We have $c_3 = c_{1,f} \mbox{Vol}(\supp(f))$ and $c_{117} = 1/(ec_3)^{\alpha}$.
We then estimate in the Fourier space the operator $A(\beta_1 D_0/\mu)f(\cdot) (1-A(D_0/\mu))$,
with $c_{107}=\big(c_3\frac{8}{\beta_1}\Gamma\Big(\frac{1}{\alpha}\Big)\frac{1}{\alpha (c_{117})^{1/\alpha}}\frac{1}{(\alpha c_{106})^{\frac{1}{\alpha-1}}}
)\big)^{1/2}$.
\begin{eqnarray*}
\|a(\beta_1 \xi^1_0/\mu)\mathcal{F}_{t'\to\xi_0^1}\Big(f(t',x) \big(\mathcal{F}^{-1}_{\xi_0\to t'}(1-a(\xi_0/\mu))\mathcal{F}_{t\to\xi_0}[v]\big)\Big)\|^2_0
\le c_{107}^2 e^{ -2c_{106}\mu^{\alpha} }\|v\|_0^2 \,.
\end{eqnarray*}
\end{proof}

\noindent
According to
the splitting $y=(t,x)$, the conormal bundle in $\R^{n+1}$ with respect to the foliation $x=$const is
defined as: \\
$N^*F := \{ (y,\xi) \in T^* \R^{n+1}; \mbox{ with } \xi=(\xi_0,\tilde \xi) \mbox{ and }\xi_0=0\}$.\\
Its reduction to a subset $K \subset \R^{n+1}$ is
\quad $\Gamma_{K} := \{ (y,\xi) \in T^*K, \, \xi_0=0 \}, $\\
and its fibre in $y_0$ is
\quad $\Gamma_{y_0} := \{ (y_0,\xi) \in N^*F \}.$
\\\\
We then recall the concept of  conormally strongly pseudoconvex function, alias  strongly pseudoconvex function with respect to $P$ on $\Gamma_{y_0}$ (\cite{T1,T2}).
\begin{definition}\label{def_pseudo_func}
A  $C^2$ real valued function  $\phi$  is {\it conormally strongly pseudoconvex}  with respect to $P$ at $y_0$ if
\begin{eqnarray}
\hspace{1mm}&&
Re\{\overline{p},\{p,\phi\}\}(y_0, \xi) > 0 
\\
\nonumber
\hspace{1mm}&&
\quad \mbox{on} \quad p(y_0,\xi)=0, \quad 0\neq \xi \in \Gamma_{y_0};
\\
\hspace{1mm}&&
\{\overline{p(y,\xi+i\tau\phi'(y))},p(y,\xi+i\tau\phi'(y))\}/(2i\tau) > 0 
\\
\nonumber
\hspace{1mm}&&
\quad \mbox{for} \; y=y_0,\; \quad 0\neq \xi \in \Gamma_{y_0}, \mbox{such that} \; p(y_0,\xi+i\tau\phi'(y_0))=0,  \, \tau>0.
\end{eqnarray}
\end{definition}

In particular, for the wave operator \eqref{wave_op} the conditions are void for non-characteristic surfaces $\phi=const$.
As consequence one can state the following Theorem (Theorem 2.1 in \cite{B}), where the Carleman-type estimate by Tataru is recalled.

\begin{theorem}\label{th_carleman} Let $\Omega$ be an open subset of $\R\times\R^n$.
Let $P(y,D)$ be the wave operator \eqref{wave_op}, with $g^{jk}(x) \in C^1(\Omega)$, $h^j, q \in C^{0}(\Omega)$.
Let $y_0 \in \Omega$ and $\psi \in C^{2,\rho}(\Omega)$ be real valued, for some fixed $\rho\in(0,1)$, such that $\psi'(y_0)\neq 0$ and $S=\{y;\psi(y)=0\}$ being an  oriented hypersurface non-characteristic
in $y_0$. \\
Consequently there is $\lambda>1$ such that $\phi(y)=\exp(\lambda \psi)$ is a conormally strongly pseudoconvex  function with respect to $P$ at $y_0$.
\\
Then there is a real valued quadratic polynomial $f$ defined in \eqref{def_f} with proper $\sigma > 0$,
and a ball $B_{R_2}(y_0)$ such that $f(y) < \phi(y)$ when $y \in B_{R_2}-\{y_0\}$ and $f(y_0)=\phi(y_0)$; and $f$ being a conormally strongly pseudoconvex function with respect to $P$ in $B_{R_2}$.
This implies that there exist $ \,\epsilon_0,\, \tau_0,\,c_{1,T},\,c_{2,T}, R$, such that, for each small enough  $\epsilon<\epsilon_0$ and large enough $\tau > \tau_0$, we have
\begin{eqnarray*}
 \|e^{-\epsilon |D_0|^2/2\tau} e^{\tau f} u\|_{1,\tau} \le c_{1,T}\, \tau^{-1/2}\|e^{-\epsilon |D_0|^2/2\tau} e^{\tau f} P(y,D)u\|_{0} + c_{2,T} e^{-\tau R_2^2/4\epsilon} \|e^{\tau f} u\|_{1,\tau}.
\end{eqnarray*}
Here $u \in H^1_{loc}(\Omega),$ with $P(y,D)u \in L^2(\Omega)$ and supp$(u) \subset B_{R}(y_0)$.
\end{theorem}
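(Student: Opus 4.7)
The plan is to first construct a quadratic polynomial $f$ that approximates $\phi = e^{\lambda \psi}$ from below near $y_0$ while inheriting conormal strong pseudoconvexity, and then to prove the weighted inequality by conjugating $P$ with $e^{\tau f}$ and regularizing the resulting operator by $e^{-\epsilon|D_0|^2/2\tau}$, in order to gain control over the directions close to the conormal bundle $N^*F$ where the hyperbolic symbol $p$ is far from being characteristic and the standard pseudoconvex Carleman argument breaks down.

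For the first step, one checks that for $\lambda$ large enough $\phi = e^{\lambda\psi}$ is conormally strongly pseudoconvex at $y_0$: exponentiation adds a positive rank-one term proportional to $\lambda\phi\,\psi'\otimes\psi'$ to the Hessian, which eventually dominates the indefinite remainder entering Definition \ref{def_pseudo_func}, since $\psi$ is non-characteristic at $y_0$. Then one takes
\begin{eqnarray*}
f(y) = \phi(y_0) + \phi'(y_0)\cdot(y-y_0) + \tfrac{1}{2}\langle \phi''(y_0)(y-y_0),\,(y-y_0)\rangle - \sigma|y-y_0|^2.
\end{eqnarray*}
Since $\phi\in C^{2,\rho}$, Taylor's remainder yields $\phi(y)-f(y) = \sigma|y-y_0|^2 + O(|y-y_0|^{2+\rho})$, strictly positive on $B_{R_2}(y_0)\setminus\{y_0\}$ for $R_2$ small. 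Both inequalities in Definition \ref{def_pseudo_func} are open in the $C^2$-topology of the weight, so for $\sigma$ and $R_2$ small enough $f$ is conormally strongly pseudoconvex in the whole of $B_{R_2}$.

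For the Carleman inequality, I would set $v = e^{\tau f}u$ and $w = e^{-\epsilon|D_0|^2/2\tau}v$ and introduce the conjugated-regularized operator $P_{\tau,\epsilon} = e^{-\epsilon|D_0|^2/2\tau}\,e^{\tau f}\,P\,e^{-\tau f}\,e^{\epsilon|D_0|^2/2\tau}$, whose principal symbol is $p(y,\xi+i\tau f'(y))$ modulo lower-order terms. Splitting $P_{\tau,\epsilon} = A + iB$ into its formally self-adjoint and anti-self-adjoint parts and squaring gives
\begin{eqnarray*}
\|P_{\tau,\epsilon} w\|_0^2 = \|A w\|_0^2 + \|B w\|_0^2 + \frac{1}{i}([A,B]w,w).
\end{eqnarray*}
The principal symbol of $i^{-1}[A,B]$ is, up to a factor $2\tau$, the Poisson bracket $\{\overline{p(y,\xi+i\tau f')},\,p(y,\xi+i\tau f')\}/(2i\tau)$. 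On the characteristic set of $P_{\tau,\epsilon}$ this is positive by the second inequality of Definition \ref{def_pseudo_func}; on the conormal directions $\xi_0=0$ where the conjugated symbol need not be characteristic, the first inequality provides positivity of $\{\overline{p},\{p,f\}\}$. A sharp G\aa rding inequality, applied to the symbol class weighted by the Gaussian $e^{-\epsilon\xi_0^2/2\tau}$, then yields $(i^{-1}[A,B]w,w)\ge c\tau\|w\|_{1,\tau}^2$ modulo terms absorbed by $\|Aw\|_0^2+\|Bw\|_0^2$, which after dividing by $\tau^{1/2}$ produces the main term of the estimate.

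The hard part is the pseudodifferential calculus with the non-classical weight $e^{-\epsilon|\xi_0|^2/2\tau}$: the composition $e^{-\epsilon|D_0|^2/2\tau}e^{\tau f}$ is not a standard operator. The decisive point is that because $f$ is quadratic, derivatives of $f$ are affine and the symbolic expansion of this composition closes up after finitely many steps, with remainders estimable explicitly in $\tau$ and $\epsilon$; this is the structural reason why a quadratic approximation of $\phi$ is used rather than $\phi$ itself. Finally, the residual term $c_{2,T} e^{-\tau R_2^2/4\epsilon}\|e^{\tau f}u\|_{1,\tau}$ comes from truncating the convolution kernel $(\tau/2\pi\epsilon)^{1/2}e^{-\tau|t'-t|^2/2\epsilon}$ outside a neighbourhood of radius $R_2/2$ of $\supp(u)\subset B_R(y_0)$: on that neighbourhood the preceding calculus yields the main inequality for $w$, while outside the Gaussian tail is exponentially small of the claimed order, producing the error. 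Since $w$ is by definition $e^{-\epsilon|D_0|^2/2\tau}e^{\tau f}u$, this gives the stated bound.
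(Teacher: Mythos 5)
The paper does not prove Theorem \ref{th_carleman}: it is recalled verbatim from \cite{B} (where it is Theorem 2.1) and is ultimately due to Tataru \cite{T1,T2}; the present text only uses it as a black box. So there is no ``paper's own proof'' to compare against, and I will evaluate your sketch as a reconstruction of Tataru's argument.

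At that level your outline captures the right strategy: the quadratic Taylor polynomial of $\phi$ at $y_0$ with a $-\sigma|y-y_0|^2$ defect makes $f(y_0)=\phi(y_0)$ and $f<\phi$ on a small punctured ball (since $\phi\in C^{2,\rho}$ gives the remainder $O(|y-y_0|^{2+\rho})$, dominated by $\sigma|y-y_0|^2$ for $R_2<(\sigma/C)^{1/\rho}$); conormal strong pseudoconvexity is an open condition, so for $\sigma,R_2$ small $f$ inherits it on $B_{R_2}$; then one conjugates by $e^{\tau f}$, regularizes by $e^{-\epsilon|D_0|^2/2\tau}$, splits into Hermitian and anti-Hermitian parts, squares, and applies a G\aa rding-type lower bound to the commutator; the residual $c_{2,T}e^{-\tau R_2^2/4\epsilon}\|e^{\tau f}u\|_{1,\tau}$ is the tail of the Gaussian kernel.

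Two points you gloss over, though, are where the real work lies, and your sketch as written does not close them. First, you introduce the conjugated-regularized operator $P_{\tau,\epsilon}=e^{-\epsilon|D_0|^2/2\tau}e^{\tau f}Pe^{-\tau f}e^{\epsilon|D_0|^2/2\tau}$, but $e^{\epsilon|D_0|^2/2\tau}$ is unbounded and this factorization cannot be taken at face value; Tataru avoids inverting the Gaussian mollifier and instead works directly with the composition $e^{-\epsilon|D_0|^2/2\tau}e^{\tau f}$ on compactly supported $u$, using the exact Gaussian/quadratic calculus (your ``closes up after finitely many steps'' remark points at this, but does not carry it out). Second, and more importantly, nowhere in your argument do you use the hypothesis that $g^{jk},h^j,q$ are independent of $t$. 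This hypothesis is not cosmetic: $e^{-\epsilon|D_0|^2/2\tau}$ is a convolution in $t$, so it commutes with the variable-coefficient part of $P$ precisely because those coefficients are $t$-independent, and it commutes with $D_0^2$ trivially; the only nontrivial composition is with $e^{\tau f}$, which is tractable because $f$ is quadratic. Without time-independence (or, more generally, Tataru's partial analyticity in $t$) the squaring/G\aa rding step you invoke produces uncontrollable commutators and the estimate fails. A correct proof must make this commutativity explicit; as it stands your sketch would appear to ``prove'' the estimate for arbitrary $C^1$ time-dependent coefficients, which is false.
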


This last estimate was used in \cite{B} to prove local stability of the unique continuation with explicit coefficients.
We recall these results in the following proposition.

\begin{proposition}\label{prop}
Let $P$ the wave operator \eqref{wave_op}. Then, under the Assumption A1-A2, and using the result of Theorem \ref{th_carleman},
there exist two positive radii $R$ and $r$ such that the local stability results (i.e. Lemma \ref{lm_expo1} and Theorems 1.2 in \cite{B}) hold true in every point of $\Omega_0$, with the same parameters.\\
Moreover,  starting by the Assumptions A1, we are able to calculate all the constants involved in the local stability in a uniform way over $\Omega_0$.
The geometric parameters are constructed in Table \eqref{tab1} while the derived constants are  in the proof of  Theorems 1.1 and 1.2 of \cite{B}.\\
All the constants depend on :\\
- the coefficients in \eqref{wave_op} and their bounds:
\begin{eqnarray*}
|g^{jk}|_{C^1(\Omega_0)}, |h|_{C^0{(\Omega_0)}}, |q|_{C^0{(\Omega_0)}}, a_1, b_1,
\end{eqnarray*}
- the assumptions on the domains:
$$\mbox{dist}_{\mathbb{\R}^{n+1}}\{\partial \Omega_0, \Omega_a \} > 0,$$
- the non-characteristic condition and the non-vanishing condition upon $\psi'$:
\begin{eqnarray*}
p_1 = \min_{y \in \overline{\Omega}_0} p(y,\psi'), \,C_l= \min_{y \in \overline{\Omega}_0}{|\psi'(y)|},
\end{eqnarray*}
- the norms of $\psi$ (see (\ref{Appendix formula1}) for notations)
\begin{eqnarray*}
|\psi'|^2_{C^0(\Omega_0)}, |\psi''|^2_{C^0(\Omega_0)}, |\lambda  \psi|_{max, \Omega_0},
\end{eqnarray*}
- the norms of the smooth localizers, in time-space and frequency, together with their Gevrey parameters:
\begin{eqnarray*}
|\chi_1(s)|_{C^2(\R)}, |b(s)|_{C^2(\R)}, |a(\xi_0)|_{C^0(\R)}.
\end{eqnarray*}
\end{proposition}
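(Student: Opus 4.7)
The plan is to show that the local Carleman estimate of Theorem \ref{th_carleman} together with the derived local stability of \cite{B} can be applied at every point $y_0\in \Omega_0$ with parameters drawn from bounds that are uniform over $\overline{\Omega_0}$. The first step is to verify that all ingredients entering Theorem \ref{th_carleman} admit $y_0$-independent estimates. Under Assumption A1, the non-characteristic condition $p(y,\psi'(y))\neq 0$ together with continuity of $y\mapsto p(y,\psi'(y))$ and the separation $\dist(\partial\Omega_0,\Omega_a)>0$ yield $p_1=\min_{y\in\overline{\Omega_0}}p(y,\psi'(y))>0$; analogously, $|\psi'(y)|\ge C_l>0$ uniformly. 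Combined with the $C^1$ bounds on $g^{jk}$ and the $C^0$ bounds on $h^j, q$, these provide a common threshold $\lambda>1$ such that $\phi(y)=\exp(\lambda\psi(y))$ is conormally strongly pseudoconvex in the sense of Definition \ref{def_pseudo_func} at every $y_0\in\overline{\Omega_0}$.

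The second step is to construct, uniformly in $y_0$, the quadratic polynomial $f$ of Theorem \ref{th_carleman} that locally minorizes $\phi$ and remains strongly pseudoconvex in a ball $B_{R_2}(y_0)$. The standard construction takes $f$ to be a second-order Taylor expansion of $\phi$ at $y_0$ modified by a small concave correction of size $\sigma>0$; the pseudoconvexity is preserved provided $R_2$ is small compared to moduli of continuity of the Hessian of $\phi$ and to the gap in the defining inequalities of Definition \ref{def_pseudo_func}. Because $\psi\in C^{2,\rho}(\Omega_0)$, this Hessian modulus is bounded by $|\psi''|_{C^0(\Omega_0)}$ and $|\lambda\psi|_{\max,\Omega_0}$, while the size of the pseudoconvexity gap is controlled from below by $p_1$, $C_l$, and the $C^1$ bounds on the coefficients. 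Consequently $\sigma$, $R_2$, and then $\epsilon_0,\tau_0,c_{1,T},c_{2,T}$ can all be chosen independently of $y_0$, so that Theorem \ref{th_carleman} holds at every point of $\Omega_0$ with the same parameters.

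The third step is to propagate these uniform parameters through the argument leading to the local stability of \cite{B}. The radii $R$ and $r$ appearing in Lemma \ref{lm_expo1} and in the local stability theorem are chosen in \cite{B} as explicit functions of $R_2$, of the pseudoconvexity constants, of the $C^1$ norms of the coefficients, and of the norms $|\chi_1|_{C^2}, |b|_{C^2}, |a|_{C^0}$ of the smooth cut-offs, together with the Gevrey parameter of $a$ (via $c_{106},c_{107},c_{108}$ of Lemma \ref{lm_util}). Each of these inputs has already been bounded uniformly on $\overline{\Omega_0}$ in the previous two steps. Hence a single pair $(R,r)$ and a single sequence of constants $c_x$ works at every $y_0\in\Omega_0$, and the tabulation in Appendix A records them explicitly in terms of the data listed in the statement.

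The main obstacle is the second step: verifying that the construction of $f$ and the choice of $R_2$, which Theorem \ref{th_carleman} states pointwise, can indeed be performed in a locally uniform fashion over $\overline{\Omega_0}$. This requires quantifying how the pseudoconvexity inequalities degrade under perturbation of the base point $y_0$ and extracting explicit lower bounds on the resulting gap; the $C^{2,\rho}$ regularity of $\psi$ and the $C^1$ regularity of $g^{jk}$ are exactly what is needed to turn this qualitative perturbation argument into the quantitative uniform estimates required for the table of constants.
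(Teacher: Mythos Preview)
Your proposal is correct and follows essentially the same route as the paper. In fact, the paper does not give a self-contained proof of this proposition in the main text; it treats the statement as a summary of the three-step procedure detailed in Appendix~A (Steps~1--3 for $\psi$, $\phi=e^{\lambda\psi}$, and the quadratic $f$) together with the explicit Table~\eqref{tab1}, and refers back to \cite{B} for the derived local-stability constants. Your outline---uniform lower bounds $p_1,C_l$, then a uniform choice of $\lambda,\sigma,R_2$ via the $C^{2,\rho}$ modulus of $\psi''$, then propagation through the cut-off constants---matches this structure, and your identification of the second step (uniformizing the construction of $f$ and $R_2$) as the main obstacle is exactly where the Appendix concentrates its effort in computing $M_R$ and the string of bounds defining $R_2$.
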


\noindent We then need to reformulate Lemma 2.6
of \cite{B} in the case of more general assumptions.


\begin{lemma}\label{lm_expo}
Under the assumption A1,
let $y_0\in \Omega_0$ and $\varphi$ be the quadratic polynomial $\varphi(y)=f(y) -f(y_0)$, with $f$ defined in \eqref{def_f}.
Let $0<\alpha< 1$ and $\chi(s) \in G_0^{1/\alpha}(\R)$ be a localizer supported in $[-8\delta,\delta]$ and equal 1 in $[-7\delta,\delta/2]$. Let  $\mu >0,\, \delta>0, $ be given constants, $b \in C_0^{\infty}(\R^{n+1})$ and
$a \in C_0^\infty(\R)$.
Let $A(D_0)$ be a pseudodifferential operator with symbol $a$. If
\begin{eqnarray*}
u\in H^1(B_{2R}),\quad Pu \in L^2(B_{2R}), \quad
\|A(D_0/\mu)b((y-y_0)/R)Pu\|_0 \le c_A e^{-\mu^\alpha} \,,
\end{eqnarray*}
then for each $\tau \ge 0$, there are constants $c_{110}, c_{109}$ such that
\begin{eqnarray*}
\|e^{-\epsilon |D_0|^2/2\tau} e^{\tau \varphi} \chi(\varphi)b\big(\frac{y-y_0}{R}\big)P(y,D)u\|_0 \le \max\{c_{110},c_A\} e^{2\tau \delta-c_{109}\mu^{\alpha}}\,\max\{1,\, \|Pu\|_{L^2(B_{2R})}\}.
\end{eqnarray*}
\end{lemma}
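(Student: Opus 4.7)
\medskip

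The plan is to split $v := b\bigl(\tfrac{y-y_0}{R}\bigr)Pu$ into low- and high-frequency parts via the cutoff $A(D_0/\mu)$, bound each separately, and exploit that the symbol $e^{-\epsilon\xi_0^2/2\tau}$ has modulus at most $1$ so that $e^{-\epsilon|D_0|^2/2\tau}$ is a contraction on $L^2$. Writing
\begin{eqnarray*}
e^{\tau\varphi}\chi(\varphi)\,v = e^{\tau\varphi}\chi(\varphi)A(D_0/\mu)v + e^{\tau\varphi}\chi(\varphi)\bigl(I-A(D_0/\mu)\bigr)v,
\end{eqnarray*}
I would handle the two pieces by quite different arguments.

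\medskip

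For the low-frequency piece, since $\chi(\varphi)\neq 0$ forces $\varphi\le \delta$, multiplication by $e^{\tau\varphi}\chi(\varphi)$ is a bounded operator on $L^2$ with norm $\le e^{\tau\delta}\|\chi\|_{\infty}$. Combined with $\|e^{-\epsilon|D_0|^2/2\tau}\|_{L^2\to L^2}\le 1$ and the hypothesis $\|A(D_0/\mu)v\|_0 \le c_A e^{-\mu^\alpha}$, this gives directly
\begin{eqnarray*}
\|e^{-\epsilon|D_0|^2/2\tau}e^{\tau\varphi}\chi(\varphi)A(D_0/\mu)v\|_0 \le \|\chi\|_{\infty}\,c_A\,e^{\tau\delta-\mu^\alpha}.
\end{eqnarray*}

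\medskip

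For the high-frequency piece I would apply Lemma \ref{lm_util}(a) (together with the contraction $\|e^{-\epsilon|D_0|^2/2\tau}\|\le 1$) with $f(y)=e^{\tau\varphi(y)}\chi(\varphi(y))$. Because $\chi\in G^{1/\alpha}_0(\R)$ and $\varphi$ is a quadratic polynomial in $y$, the composition $\chi(\varphi(\cdot))$ lies in $C^\infty_0(\R^n_x,G^{1/\alpha}_0(\R_t))$, with Gevrey constants controlled by those of $\chi$ and by $|\varphi'|, |\varphi''|$. Since $1/\alpha>1$, the analytic multiplier $e^{\tau\varphi}$ belongs to the same Gevrey class, so $f$ satisfies the hypothesis of Lemma \ref{lm_util}(a) with a Gevrey constant $c_{1,f}$ that depends on $\tau$. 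The outcome will be an estimate of the form
\begin{eqnarray*}
\|e^{\tau\varphi}\chi(\varphi)(I-A(D_0/\mu))v\|_0 \le c\,e^{2\tau\delta-c_{109}\mu^\alpha}\|v\|_0 .
\end{eqnarray*}
Summing the two bounds, using $\|v\|_0\le \|b\|_{\infty}\|Pu\|_{L^2(B_{2R})}$, and absorbing the low-frequency term in the factor $\max\{1,\|Pu\|_{L^2(B_{2R})}\}$ via a suitable choice of $c_{110}\ge \max(\|\chi\|_\infty c_A, c)$ and $c_{109}\le \min(1,c_{106})$, yields the claim.

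\medskip

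The main obstacle is the $\tau$-dependence of the Gevrey constant of $f=e^{\tau\varphi}\chi(\varphi)$: iterating $\partial_t$ on $e^{\tau\varphi}$ produces factors of $\tau$, and one must show these combine with the $(k!)^{1/\alpha}$-growth coming from $\chi$ in such a way that the only net loss in Lemma \ref{lm_util}(a) is the multiplicative $e^{\tau\delta}$ (giving, together with the $e^{\tau\delta}$ from $\|f\|_\infty$, the factor $e^{2\tau\delta}$ that appears in the statement) without degrading the gain $e^{-c_{109}\mu^\alpha}$. Concretely I would carry this out by a Cauchy estimate for $e^{\tau\varphi}$ on a polydisc of radius $r_0$ small enough that $|\varphi(y+\eta)-\varphi(y)|\le \delta$ for $|\eta|\le r_0$, so that $|\partial^k e^{\tau\varphi}(y)|\le k!\, r_0^{-k} e^{\tau(\varphi(y)+\delta)}$, and then use $(k!)^{1-1/\alpha}\to 0$ (since $\alpha<1$) to absorb the $r_0^{-k}$ into a $\tau$-independent Gevrey constant. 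Once this is controlled, the rest is a routine combination of Lemma \ref{lm_util}(a) with the contractivity of $e^{-\epsilon|D_0|^2/2\tau}$ on $L^2$.
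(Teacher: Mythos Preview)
Your decomposition into low/high temporal frequency via $A(D_0/\mu)$ and your treatment of the low-frequency piece are correct and match the argument in the companion paper. Your Cauchy-estimate observation is also the right idea: on $\supp \chi(\varphi)$ one has $|\partial_t^k e^{\tau\varphi}|\le k!\,r_0^{-k}e^{2\tau\delta}$ with $r_0$ depending only on $|\varphi'|,|\varphi''|,\delta$, so $e^{\tau\varphi}\chi(\varphi)$ satisfies a Gevrey bound $|\partial_t^k f|\le e^{2\tau\delta}C^{k+1}(k!)^{1/\alpha}$ with $C$ independent of $\tau$. This is exactly how the factor $e^{2\tau\delta}$ enters the estimate without spoiling the exponent $c_{106}$ in the Gevrey Fourier decay.

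The gap is in how you use Lemma~\ref{lm_util}(a). That lemma estimates $\|A(\beta_1D_0/\mu)\,f\,(1-A(D_0/\mu))v\|_0$; the outer frequency cutoff $A(\beta_1D_0/\mu)$ is essential, because it forces $|\xi_0^1-\xi_0|\gtrsim\mu$ in the convolution and hence puts you on the tail of $\widehat f$. If you first discard $e^{-\epsilon|D_0|^2/2\tau}$ via the contraction bound, you are left with $\|f(1-A(D_0/\mu))v\|_0$, which has no outer localization and admits only the trivial bound $\|f\|_\infty\|v\|_0$---no gain of $e^{-c\mu^\alpha}$ is possible. The Gaussian cannot be thrown away; it must serve as the outer localization. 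Concretely, one inserts $A(3D_0/\mu)+(1-A(3D_0/\mu))$ between $e^{-\epsilon|D_0|^2/2\tau}$ and $f$: the first piece is handled by Lemma~\ref{lm_util}(a) with $\beta_1=3$ (using your $\tau$-independent Gevrey constant and picking up the prefactor $e^{2\tau\delta}$), while on the second piece $|\xi_0^1|\ge\mu/3$ gives $e^{-\epsilon(\xi_0^1)^2/2\tau}\le e^{-\epsilon\mu^2/(18\tau)}$, and then
\[
e^{\tau\delta-\epsilon\mu^2/(18\tau)}\le e^{2\tau\delta-c\mu}
\le e^{2\tau\delta-c\mu^\alpha}
\quad\text{with } c=\sqrt{\epsilon\delta/36}\text{ by AM--GM (for }\mu\ge1\text{).}
\]
This is precisely the origin of the term $\sqrt{\epsilon\delta/36}$ in $c_{109}=\min(\sqrt{\epsilon\delta/36},\,c_{128}/2,\,1)$; the other term $c_{128}/2$ comes from the Lemma~\ref{lm_util}(a) branch. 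Your write-up misses this splitting and therefore cannot produce the stated constant $c_{109}$.
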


Using Lemma \ref{lm_expo} we now reformulate Theorem 1.1. of \cite{B} with more general assumptions.

\begin{lemma}\label{lm_expo1}
Under the assumptions A1-A2, let $y_0 \in S=\{y;\psi(y)=0\}$
be an  $C^{2,\rho}$-oriented hypersurface, which is non-characteristic
in $y_0$ and
with $\psi'(y_0) \neq 0$. \\
We also assume that $u \in H^1(\Omega)$ is such that $\supp(u) \cap B_{2R}(y_0) \subset \{y; \psi(y) \le  0\}$.\\
Let
$b\in G^{1/\alpha}_0(\R^{n+1})$ be Gevrey functions with compact support, with $0<\alpha<1$.
Then,  for $\mu \ge 1$, if for some positive coefficients $c_U,\,c_P,\,c_A$
\begin{eqnarray*}
\|u\|_{H^1(B_{2R(y_0)})}\le c_U,\quad \|Pu\|_{L^2(B_{2R}(y_0))} < c_P, \quad \|A(D_0/\mu)b((y-y_0)/R)Pu\|_0 \le c_A e^{-\mu^\alpha},
\end{eqnarray*}
then,
there are constants $c_{150}, c_{131}, c_{132}$ independent of $\mu$, such that
\begin{eqnarray*}
\|A(D_0/\omega)b((y-y_0)/r)u\|_{H^1} \le c_{150} e^{-c_{132}\mu^{\alpha^2}},\quad \forall\, \omega \le \mu^\alpha/(3c_{131}).
\end{eqnarray*}
Moreover $c_{131}$ and $c_{132}$ are independent of $c_U,c_P,c_A$, while $c_{150}$ depends on them.\\
The dependency of all the constants is as described in Proposition \ref{prop}.
\end{lemma}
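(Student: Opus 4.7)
The plan is to adapt the local stability proof of Theorem~1.1 in \cite{B} to an arbitrary point $y_0$ of the non-characteristic hypersurface $S=\{\psi=0\}$, with every constant uniform over $\Omega_0$. The two workhorses are Tataru's Carleman estimate (Theorem~\ref{th_carleman}) and the Gevrey cut-off lemma (Lemma~\ref{lm_expo}).

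First I would set $\varphi(y)=f(y)-f(y_0)$, with $f$ the quadratic polynomial supplied by Theorem~\ref{th_carleman}, and introduce the test function
\[
v \;=\; \chi(\varphi)\,b\!\Big(\frac{y-y_0}{R}\Big)\,u,
\]
with $\chi\in G_0^{1/\alpha}(\R)$ supported in $[-8\delta,\delta]$ and equal to $1$ on $[-7\delta,\delta/2]$, as in Lemma~\ref{lm_expo}. Since $f\le \phi=e^{\lambda\psi}$ with equality only at $y_0$, and $\supp(u)\cap B_{2R}(y_0)\subset\{\psi\le 0\}$, one has $\varphi\le 0$ on $\supp(v)$, which is compact in $B_{2R}(y_0)$; hence Theorem~\ref{th_carleman} applies to $v$. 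Decomposing $Pv=\chi(\varphi)\,b\,Pu+[P,\chi(\varphi)b]u$, the first summand is controlled by Lemma~\ref{lm_expo} and yields a bound of order $\max\{c_{110},c_A\}\,e^{2\tau\delta-c_{109}\mu^\alpha}$. For the commutator, $\supp(\nabla(\chi(\varphi)b))\cap\supp(u)$ is contained in $\{\varphi\in[-8\delta,-7\delta]\}\cup\{R\le|y-y_0|\le 2R\}$ (the upper branch $\varphi\in[\delta/2,\delta]$ of $\supp(\chi'(\varphi))$ cannot intersect $\{\varphi\le 0\}$); on the first piece the weight $e^{\tau\varphi}$ produces a factor $e^{-7\tau\delta}$, and on the second $\varphi\le-c_\star$ for an explicit $c_\star>0$ coming from $f<\phi$ on $B_{2R}\setminus\{y_0\}$, contributing $e^{-c_\star\tau}$. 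Choosing $\tau=c_{131}\mu^\alpha$ with $c_{131}$ small enough that $2c_{131}\delta<c_{109}/2$, and absorbing the term $c_{2,T}e^{-\tau R_2^2/4\epsilon}\|e^{\tau\varphi}v\|_{1,\tau}$ on the left-hand side of the Carleman inequality, yields the intermediate estimate
\[
\|e^{-\epsilon|D_0|^2/2\tau}e^{\tau\varphi}v\|_{1,\tau}\le C_1\,e^{-c_0\mu^\alpha}.
\]

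Finally I would pick $r$ so small that $B_{2r}(y_0)\subset B_R(y_0)$ and $|\varphi|\le\eta$ on $B_{2r}(y_0)$, so that $v=u$ there. For any $\omega\le\mu^\alpha/(3c_{131})$ the symbol $a(\xi_0/\omega)\,e^{\epsilon\xi_0^2/2\tau}$ is pointwise bounded on its support by $e^{2\epsilon\omega^2/\tau}$, so $A(D_0/\omega)$ can be recovered from $e^{-\epsilon|D_0|^2/2\tau}$ up to this multiplicative factor; the exponent $2\epsilon\omega^2/\tau$ is at most $2\epsilon\mu^\alpha/(9c_{131}^3)$, so provided $c_{131}$ is also chosen large enough that $2\epsilon/(9c_{131}^3)<c_0$, the amplifier does not swallow the Carleman decay $e^{-c_0\mu^\alpha}$. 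Commuting $A(D_0/\omega)$ through the Gevrey cut-off $b((y-y_0)/r)$ by Lemma~\ref{lm_util} contributes an additional error of the form $e^{-c\omega^\alpha}$; since $\omega\le\mu^\alpha/(3c_{131})$ gives $\omega^\alpha\le(3c_{131})^{-\alpha}\mu^{\alpha^2}$, this is precisely the mechanism producing the weaker rate $e^{-c_{132}\mu^{\alpha^2}}$ of the conclusion, which for large $\mu$ dominates the stronger $e^{-c'\mu^\alpha}$ contribution. The main technical obstacle I expect is exactly this simultaneous tuning: $c_{131}$ must satisfy both $2c_{131}\delta<c_{109}/2$ from the Carleman step and $c_{131}^3>2\epsilon/(9c_0)$ from the spectral inversion step, and one must verify that every constant so produced depends only on the quantities listed in Proposition~\ref{prop}, ensuring the uniformity of $c_{131}$, $c_{132}$ and $c_{150}$ in $y_0\in\Omega_0$.
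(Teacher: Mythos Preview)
Your skeleton is the same as the paper's (which simply says the argument is identical to Theorem~1.1 of \cite{B}): localize by $\chi(\varphi)\,b((y-y_0)/R)$, apply the Carleman estimate of Theorem~\ref{th_carleman}, feed the $Pu$ part into Lemma~\ref{lm_expo}, handle the commutator via the weight, and then recover $A(D_0/\omega)$ from the Gaussian. Two points deserve correction.

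\textbf{The annulus piece is absent.} By the specific relation between $\delta$ and $R$ in Table~\eqref{tab1}, the set $\{-8\delta\le\varphi\}\cap\supp(u)\cap B_{2R}(y_0)$ already lies in $B_R(y_0)$, where $b\equiv 1$. Hence $D(\chi(\varphi)\,b)\,u=b\,D(\chi(\varphi))\,u$ and $[P,\chi(\varphi)b]u=[P,\chi(\varphi)]\,b\,u$; the commutator lives only on $\{\varphi\in[-8\delta,-7\delta]\}$, and the region $\{R\le|y-y_0|\le 2R\}$ never enters. Your proposed bound $\varphi\le -c_\star$ there is also not quite justified: $f<\phi$ only yields $\varphi<\phi-\phi(y_0)\le 0$ on $\supp(u)$, with no immediate uniform gap.

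\textbf{The tension you flag is an artifact of overloading one constant.} In the actual proof, $\tau$ and the $\omega$-constraint carry different constants: one sets $\tau\sim c_{130}\,\mu^\alpha$ with $c_{130}=\tfrac{3c_{109}}{4\delta}\bigl(\tfrac{1}{16}\bigr)^{5}$ small enough that $2\tau\delta<\tfrac12 c_{109}\mu^\alpha$, while $c_{131}>1$ is a \emph{separate} large constant governing $\omega\le\mu^\alpha/(3c_{131})$. The Gaussian amplifier then contributes an exponent of order $\epsilon\,\omega^2/\tau\lesssim \mu^\alpha/(c_{131}^2 c_{130})$, and one chooses $c_{131}$ large relative to $c_{130}$. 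With two constants the constraints decouple; the ``simultaneous tuning'' obstacle you anticipated disappears.
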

\begin{proof} The proof is identical to the one of Theorem 1.1. in \cite{B}.
Th. \ref{th_carleman} is used for the function $\chi(\varphi)b\big(\frac{y-y_0}{R}\big)u$ that is supported in $B_R(y_0) \cap \{y; \phi(y)\le \phi(y_0)\} \cap \{y; -8 \delta< \varphi(y) < \delta\}$. Moreover  $[P,\chi(\varphi)b\big(\frac{y-y_0}{R}\big)]u=[P,\chi(\varphi)\big)]b\big(\frac{y-y_0}{R}\big)u$ (since $D(\chi b) u = b D(\chi) u$), while in $B_R(y_0)$ one has $\chi(\varphi)b\big(\frac{y-y_0}{R}\big)u = \chi(\varphi)u$.
\\
Here we have just to recalculate the related coefficients, distinguishing the ones dependent upon the parameters $c_U, c_P, c_A$ from the ones independent of them.\\
We first list of coefficients independent of $c_U, c_P, c_A$, but dependent on the Gevrey parameters of the localizers and from the geometric constants $r,R,\delta$ (see Table \eqref{tab1}):
\begin{eqnarray*}
c_{1X}=c_{1X}(1/\alpha),\;
c_{2X} = 1/(eNc_{1X})^{\alpha},\;
c_{101}=c_{101}(\alpha),\;
c_{102}=c_{102}(\alpha,c_{101}),\\
c_{119}=\delta c_{1X}(\alpha),\;
B=\delta^{\alpha} c_{1X}(\alpha),\;
|\partial^k_t \varphi(y)|_{C^0(B_R)} \le c_{118}(R) (>1),\\
c_{114}=c_{1,T}^2 |g|^2_{C^1}|\chi_1|^2_{C^2}(1+|\varphi'|^4_{C^0}/\delta^4+|\varphi''|^2_{C^0}/\delta^2),\\
c_{115}=c_{2,T}^2(|\varphi'|^2_{C^0} +1)(3^3 e^{-3}/\delta^3)(1+|\chi_1'|^2_{C^0}/\delta^2),\\
c_{120}=\big(\frac{(1-\alpha)}{\alpha \ln B}\big)^{\frac{(1-\alpha)}{\alpha}}B^{-\frac{(1-\alpha)}{\alpha \ln B}+1},\;
c_{121}= \frac{2 \pi c_{119}}{\alpha} \Gamma(2)\big(\frac{2}{\alpha}\big)^{\alpha}c_{120},\\
c_{122}=\max(1,4c_{118}(R)c_{121},c_{1X}/R) \ge 1,\;
c_{123}=(e c_{122})^{\alpha} < 1,\;
c_{128}=\frac {1}{3^{\alpha}2}c_{123} < 1,\\
c_{110} = \big(c_{122} (8/3) \Gamma(1/\alpha)/[\alpha c_{123}^{1/\alpha}(\alpha c_{128})^{1/(\alpha-1)}]\big)^{1/2},\\
c_{109}=\min(\sqrt{\epsilon\,\delta/36}, c_{128}/2, 1) \le 1,\;
c_{130} = \displaystyle \frac{3 c_{109}}{4 \delta}\Big(\frac{1}{16 }\Big)^{5},\\
c_{131}=\max\{\sqrt{2}(16)^6,\, (\sqrt{2}(16)^6 3^{(\alpha-1)}\sqrt{\epsilon_0 \delta})/c_{123},\, ((16)^6 \sqrt{\epsilon_0 \delta})/(3\sqrt{2}) \} > 1, \\
c_{135}=r^{\alpha} c_{2X}\frac{1}{2 3^{\alpha^1}},\;
c_{132}=\min(c_{135}(r),c_{137}),\\
c_{137}=\min(\frac{1}{2}\big(c_{102}\delta^{\alpha}\frac{(c_{130})^{\alpha}}{(\sqrt{2})^{\alpha}} ,\delta \frac{c_{130}}{2 \sqrt{2}}\big) , \frac{1}{2} c_{102}\delta^{\alpha}(\frac{1}{2\sqrt{2}}c_{130})^{\alpha}).
\end{eqnarray*}
Then the coefficients dependent on $c_U, c_P, c_A$ are:
\begin{eqnarray*}
c_{116}=3\max(c^2_{1,T}\max\{c_{110},c_A\}^2\max\{ 1,\, c_P\}^2, c_{114}c_U^2,c_{115} c_U^2),\\
c_{113}=\max(c_{116}, c_U^2c_{112}(1+\tau^3_0)(1+|\chi'_1|^2_{C^0}/\delta^2)e^{12\delta\tau_0}), \\
c_{134}= c_U\Big((r c_{1X}) \frac{8}{3}\Gamma\Big(\frac{1}{\alpha}\Big)\frac{1}{\alpha (r^{\alpha} c_{2X})^{1/\alpha}(\alpha c_{135})^{1/(\alpha-1)}}\Big)^{1/2}\\
c_{136} =
2  c_{101} \sqrt{c_{113}} \int_{\R} \sqrt{(s/\delta)^2+1}\,  e^{ -c_{102}s^{\alpha}/2} ds +
\\+ c_{101} \Big(\frac{2c_{113}(1+ c_{130}^2)}{\min\{1,\,c_{130}^2/2\}}\Big)^{\frac{1}{2}} \Big(2 \int_{0}^{+\infty}  e^{- y' /2} dy'
+  \int_{\R} e^{-c_{102}|x'|^{\alpha}} dx' \Big)\\
c_{129}=\max(c_{134}, c_{136}).
\end{eqnarray*}
We rename $c_{129}$ with $c_{150}$ to underline its new dependencies.
\end{proof}

\noindent We now introduce the main assumptions to prove the global stability result.\\
We recall that the support condition in Lemma \ref{lm_expo1} is not fulfilled everywhere for $u \in H^1(\Omega)$.
The idea is that at each step one applies the local stability result of Lemma \ref{lm_expo1} in a ball centred in the point $y_j$ and then one removes from $\supp(u)$ (in a smooth way) a part of the ball $B_r(y_j)$ already calculated, for example by subtracting by $b(2(y-y_j)/r)u_j$, which is supported in $B_r(y_j)$. Then $u_{j+1}$ fulfills the support condition in Lemma \ref{lm_expo1} in the ball
$B_{2R}(y_{j+1})$, also due to our Assumption A1 or A3.

\begin{paragraph}{\bf Assumption A4}
Let $\Omega_0$ and $\psi$ be as in Assumption A1.
Then consider $r$ and $R$ the uniform radii defined in Proposition \ref{prop}. \\
We define the set of points ${\cal E}=\{y_j\in \Omega_0, \, j=1,..,N\}$,
such that $\overline{\Omega_a} \subset \bigcup_{j=1}^N B_{r}(y_j) \subset \Omega_0$,
in the following way:
\\
1. Let $y_1 \in \Omega_0$ be the maximum point for $\psi$ in $\overline \Omega_a$.\\
Set $u_1=u$ and
$u_2(y) = \Big(1-b\big(\frac{2(y-y_1)}{r}\big)\Big)u_1$,
\\
2. Let $y_2 \in \Omega_0$ be the maximum for $\psi$ in $\overline \Omega_a\backslash B_{r/2}(y_1)$,\\
3. In general, let $y_j \in \Omega_0$ be the maximum for $\psi$ in $\overline \Omega_a\backslash \bigcup_{k=1}^{j-1} B_{r/2}(y_k)$, i.e.
$y_j \in \mbox{argmax}\{ \psi(y), \, y \in (\overline \Omega_a \backslash \bigcup_{k=1}^{j-1} {B_{r/2}(y_k)}) \}$.
\\
Then we define:
\begin{eqnarray}\label{uj}
u_j = \prod_{k=1..j-1}\big(1-b_k\big)u, \quad b_{k}:=b\Big(\frac{2(y-y_k)}{r}\Big).
\end{eqnarray}
Each $y_j$ lies on the surface $S_{j}=\{y;\, \psi(y)=\psi(y_j)\}$.
Notice that, since $|y_j-y_k|\geq r/2$ for $j\neq k$,
\begin{eqnarray}\label{170}
N \le c_{170}=\frac{Vol(\Omega_0)}{\omega_{n+1} (\frac{r}{4\sqrt{\max(b_1,1)}})^{n+1}},
\end{eqnarray}
where $\omega_{n+1}$ is the volume of the ball of radius one in $\R^{n+1}$, where
we consider the following bound for the coefficients $a_1 \delta^{jk} \le g^{jk}(x) \le b_1 \delta^{jk}$.\\
We finally define $l(y) \in C^{\infty}_0(\R^{n+1})$ a localizer such that $l = 1$ on ${\cal L}=\{y\in \Omega_0;\, \dist(y,\p\Omega_0)\ge \frac{R_1}{4}\}$, $0\le l \le 1$ and supp$(l) \subset \Omega_0$.
Observe that $\cup_{k=1}^{N}B_{2R}(y_k) \subseteq {\cal L}$.
\end{paragraph}
\\\\
We now can formulate a stability estimate of inverse exponential type for the low temporal frequencies of $u_j$.

\begin{theorem}\label{iter}
Under the Assumptions A1-A2-A4,
let $y_k \in {\cal E}$
and let
$b\in G^{1/\alpha}_0(\R^{n+1})$ be a Gevrey functions of class $1/\alpha$ with compact support, such that $0<\alpha<1$.
\\
Then, there exist constants $R,r$ with $R \ge 2r >0$, and $c_{159}>1$
such that
 for $\mu > c_{159}$ there are coefficients $c_{151}, c_{152}, c_{154},c_{155},c_{156}, \beta, N$ for which,\\
if
\begin{eqnarray}\label{ipo1}
\|u\|_{H^1(\Omega_1)}= 1,\quad \|Pu\|_{L^2(\Omega_1)}
< 1
,\quad \|A\big(\frac{D_0}{\beta \mu}\big)l(y)Pu\|_{L^2} \le  \exp(-\mu^{\alpha}),
\end{eqnarray}
then calling  $\mu_1=\mu$ and $\mu_j = c_{156} \mu_{j-1}^{\alpha}$ for $2 \le j\le N$, we have $\mu_j \ge 1$ and
\begin{eqnarray}\label{ipo2}
\|u_j\|_{H^1(B_{2R}(y_j))}\le c_{152},\quad \|Pu_j\|_{L^2(B_{2R}(y_j))} \le c_{153},
\end{eqnarray}
\begin{eqnarray}\label{ipo3}
\|A(D_0/\mu_j)b((y-y_j)/R)Pu_j\|_0 \le c_{154,j}\exp({-\mu_j^\alpha}),
\end{eqnarray}
and consequently
\begin{eqnarray}\label{ipo4}
\|A(D_0/\omega)b((y-y_j)/r)u_j\|_{H^1} \le c_{155,j}\exp({-c_{132}\mu_j^{\alpha^2}}),\quad \forall\, \omega \le \mu_j^\alpha/(3c_{131}).
\end{eqnarray}
The radii $r$ and $R$ are defined in Table \eqref{tab1}, while the coefficients $c_{k}$ are calculated in the proof of the Theorem.
\end{theorem}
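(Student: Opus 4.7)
The plan is to prove \eqref{ipo2}, \eqref{ipo3}, \eqref{ipo4} by simultaneous induction on $j$, using Lemma \ref{lm_expo1} as the engine that passes from a low-frequency estimate on $Pu_j$ to one on $u_j$, and then propagating this estimate to step $j+1$ through the recursion $u_{j+1}=(1-b_j)u_j$. The constants $c_{156}$, $c_{159}$, and $\beta$ will be chosen at the end so that the frequency chain $\mu_{j+1}=c_{156}\mu_j^\alpha$ satisfies the two key compatibility conditions $c_{156}\leq 1/(3c_{131})$ and $c_{156}^\alpha\leq c_{132}$, guaranteeing respectively that $\mu_{j+1}\leq \mu_j^\alpha/(3c_{131})$ (which makes \eqref{ipo4} at step $j$ usable to bound objects at frequency $\mu_{j+1}$) and $\mu_{j+1}^\alpha\leq c_{132}\mu_j^{\alpha^2}$ (so the exponential decay from step $j$ dominates the target bound at step $j+1$). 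Requiring $\mu\geq c_{159}$ just enforces $\mu_N\geq 1$ along the chain, which follows from $c_{156}\mu^{\alpha^{N-1}}\geq 1$ once $c_{159}$ is large enough.

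For the base case $j=1$, \eqref{ipo2} is immediate from $u_1=u$ and the normalizations in \eqref{ipo1}. To derive \eqref{ipo3} for $j=1$, I apply Lemma \ref{lm_util}\,b) to the product $b((y-y_1)/R)\,l(y)\,Pu$: since $l\equiv 1$ on $B_{2R}(y_1)\subseteq {\cal L}$, multiplication by $l$ on the support of $b((\cdot-y_1)/R)$ is the identity, and taking $\beta_1=\beta$ with $\beta>2$ exchanges the localizer $A(D_0/\mu)$ for the hypothesis's localizer $A(D_0/(\beta\mu))$ modulo an exponentially small tail controlled by $\|l Pu\|_0\leq 1$. Before running the inductive step I verify the geometric support condition of Lemma \ref{lm_expo1}: by the argmax construction, $u_j$ vanishes on $\bigcup_{k<j}B_{r/2}(y_k)$, and on the complement within $\overline{\Omega}_a$ one has $\psi\leq \psi(y_j)$; the conditions on $\Upsilon$ in Assumption A1 (resp.\ A3) and the inequality $R\geq 2r$ ensure this stays true on all of $B_{2R}(y_j)\cap \supp(u_j)$ after translating $\psi$ by $\psi(y_j)$.

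In the inductive step, I assume \eqref{ipo2}-\eqref{ipo4} hold at level $j$ and verify them at $j+1$. The bound \eqref{ipo2} for $u_{j+1}=(1-b_j)u_j$ follows by the Leibniz rule together with $Pu_{j+1}=(1-b_j)Pu_j-[P,b_j]u_j$, since $[P,b_j]$ is a first-order differential operator with $C^1$-bounded coefficients and $\|u_j\|_{H^1}\leq c_{152}$. For \eqref{ipo3} at step $j+1$, I split
\begin{equation*}
A\!\left(\tfrac{D_0}{\mu_{j+1}}\right)b\!\left(\tfrac{y-y_{j+1}}{R}\right)Pu_{j+1}
= A\!\left(\tfrac{D_0}{\mu_{j+1}}\right)b\!\left(\tfrac{y-y_{j+1}}{R}\right)(1-b_j)Pu_j
- A\!\left(\tfrac{D_0}{\mu_{j+1}}\right)b\!\left(\tfrac{y-y_{j+1}}{R}\right)[P,b_j]u_j.
\end{equation*}
The commutator piece is supported in $\{r/2\leq|y-y_j|\leq r\}$, where $b((y-y_j)/r)\equiv 1$, so I may insert that factor at no cost and then apply Lemma \ref{lm_util}\,b) to transfer the multipliers outside $A(D_0/\mu_{j+1})$, reducing the estimate to $\|A(D_0/(\beta_1\mu_{j+1}))\,b((y-y_j)/r)\,u_j\|_{H^1}$ plus an exponentially small tail in $\mu_{j+1}$; choosing $\beta_1$ so that $\beta_1\mu_{j+1}\leq \mu_j^\alpha/(3c_{131})$ lets me invoke the inductive \eqref{ipo4}, and compatibility of the two frequency conditions above yields the desired bound $\exp(-\mu_{j+1}^\alpha)$ (up to harmless polynomial factors absorbed into $c_{154,j+1}$).

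For the bulk term $(1-b_j)Pu_j$, I unroll the recursion once to obtain $Pu_j=\prod_{k<j}(1-b_k)Pu+\text{(commutators)}$, where each commutator is again localized in some $B_r(y_k)$ for $k<j$ and handled by the already-proved \eqref{ipo4} at step $k$ exactly as above. The remaining genuine-source contribution $\prod_{k\leq j}(1-b_k)Pu$ is bounded by multiplying and dividing by $l(y)$ (which equals $1$ on $B_{2R}(y_{j+1})\subseteq{\cal L}$) and applying Lemma \ref{lm_util}\,b) with $\beta_1\mu_{j+1}=\beta\mu$: the remainder is $\leq\exp(-c_{106}(\beta\mu)^\alpha)$ and the principal term is $\leq\|\prod(1-b_k)b((y-y_{j+1})/R)\|_\infty\exp(-\mu^\alpha)\leq\exp(-\mu_{j+1}^\alpha)$. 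The main obstacle is bookkeeping: ensuring that the prefactors $c_{154,j}$, $c_{155,j}$ grow only polynomially in $j$ (which is automatic because the number of commutator terms is at most $N\leq c_{170}$ and each one contributes a factor depending only on $\|b\|_{C^2}$, $|g|_{C^1}$, $|h|_{C^0}$, $|q|_{C^0}$), and that all constants ultimately depend only on the geometric data listed in Proposition \ref{prop}, not on $\mu$. The induction closes once $\mu\geq c_{159}$ is chosen so that $\mu_N\geq 1$ along the chain.
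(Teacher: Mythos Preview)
Your proposal is correct and follows the same overall architecture as the paper: induction on $j$, with Lemma~\ref{lm_expo1} converting \eqref{ipo3} into \eqref{ipo4} at each step, and Lemma~\ref{lm_util} used to pass Gevrey multipliers through $A(D_0/\cdot)$ when propagating the low-frequency bound on $Pu_j$.

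There is one organizational difference worth noting. For the ``bulk'' part of \eqref{ipo3} you unroll $Pu_{j+1}$ all the way back to $\prod_{k\le j}(1-b_k)Pu$ and invoke the original hypothesis on $A(D_0/(\beta\mu))l\,Pu$ directly, handling each commutator $[P,b_k]u_k$ via the already-established \eqref{ipo4} at step $k$. The paper instead keeps a running one-step recursive estimate on $\|A(D_0/(\beta\mu_{j-1}))l(y)Pu_{j-1}\|_0$ (their $c_{162,j-1}$) and only peels off $[P,b_{j-1}]u_{j-1}$ at each stage. Both routes work; yours is slightly cleaner conceptually but requires tracking Gevrey constants for products of up to $N$ cut-offs, while the paper's gives a simpler recursion for the prefactors.

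Two places where your write-up understates the work. First, your constraints $c_{156}\le 1/(3c_{131})$ and $c_{156}^\alpha\le c_{132}$ are not quite enough: the high-frequency tails from Lemma~\ref{lm_util} produce factors $e^{-c_{106}\nu^\alpha}$ and (in the paper's bookkeeping) $e^{-c_{165}\mu_{j-1}^\alpha}$, which force the additional conditions $c_{156}\le c_{106}^{1/\alpha}/(3c_{131})$ and $c_{156}\le c_{165}^{1/\alpha}$; the paper's $c_{156}$ is the minimum of all four and also carries an extra factor $1/(6\beta)$ since the intermediate frequency $\nu$ sits between $\beta\mu_j$ and $\mu_{j-1}^\alpha/(3c_{131})$. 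Second, ``apply Lemma~\ref{lm_util}\,b) to transfer the multipliers'' for the commutator piece is too quick: $[P,b_j]u_j$ is first order in $u_j$, and you need the $H^1$ bound in \eqref{ipo4}. The paper handles this (their terms $I_2$, $I_3$) by moving the derivative $D_0$ or $D_{x_r}$ to the front of the integrand, which produces an extra zeroth-order term; then the low-frequency part is bounded by $|f|_{C^1}\|A(D_0/\nu)b((\cdot-y_j)/r)u_j\|_{H^1}$ and the high-frequency remainder by Lemma~\ref{lm_util}\,c) (the $H^m$ version), not just b). With these details filled in, your argument closes exactly as the paper's does.
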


\begin{proof}
Let  $b\in G_0^{1/\alpha}(\R^{n+1})$ be a localizer with support as in Assumption 2.
\\
Observe that according to our definitions we have
:\\
\begin{equation*}
B_r(y_j) \subset \mbox{supp}\,b((y-y_j)/r) \subseteq B_{2r}(y_j) \subset B_R(y_j) \subset \mbox{supp}\,b((y-y_j)/R)\subseteq B_{2R}(y_j)\,,
\end{equation*}
We now proceed step by step.\\
Step 1. We consider $y_1 \in \cal{E}$ defined in Assumption A4. From the hypotheses \eqref{ipo1} the following inequalities hold true for $u_1=u$:
\begin{eqnarray*}
\|u\|_{H^1(B_{2R}(y_1))} \le 1, \quad
\|Pu\|_{L^2(B_{2R}(y_1))} \le 1.
\end{eqnarray*}
From the definition of $l$ in Assumption A4
and applying Lemma \ref{lm_util}.(b) with $f = b\big(\frac{y-y_1}{R}\big)$, $h=l$, $\beta_1=\beta$, $\mu = \beta \mu$ we get
\begin{eqnarray}\label{bj-0}
\|A\big(\frac{D_0}{\mu}\big)b\Big(\frac{y-y_1}{R}\Big) l(y)Pu\|_{L^2}&\le& \|A\big(\frac{D_0}{\beta\mu}\big)l(y)Pu\|_{L^2} + \tilde c_{107} \exp({-\tilde c_{106}\beta^{\alpha} \mu^{\alpha}}) \|l(y)Pu\|_{L^2}
\nonumber
\\&\le& \exp({-\mu^{\alpha}})
+ \tilde c_{107} \exp({-\mu^{\alpha}}) \le c_{154,1}\exp({-\mu^{\alpha}})
\end{eqnarray}
with $c_{154,1}=1 + \tilde c_{107}$ and where $ \beta >2$ is a parameter chosen as:
\begin{eqnarray}\label{beta}
\beta= 2 + \big(\frac{4}{\tilde c_{117}}\big)^{1/\alpha}
\end{eqnarray}
in order to have $\tilde c_{106}\beta^{\alpha}=1$.
Indeed, applying Lemma \ref{lm_util}.(b), one gets
$\tilde c_{106}=\frac{\tilde c_{117}}{4} \big( 1-\frac{2}{\beta}\big)^{\alpha}$
where $\tilde c_{117}=1/(2 \tilde c_3)^{\alpha}$ and $\tilde c_3=c_{1,b}(R)\cdot\mbox{Vol}(\supp(b\big(\frac{y-y_1}{R}\big)))$, with $c_{1,b}(R)$ the Gevrey parameter associated with $b\big(\frac{y}{R}\big)$. For the calculation of $\tilde c_{107}$ see lemma \ref{lm_util}.
Notice that $\tilde c_{106}$ and $\tilde c_{107}$ are independent of $y_1$, since the calculation is invariant up to translations.
\\
Calling $\tilde \psi(y) = \psi(y) - \psi(y_1)$ we notice that
$u$ fulfils $\supp(u) \cap B_{2R}(y) \subset \{y; \psi(y)\le \psi(y_1)\}$
We are then allowed to apply Lemma \ref{lm_expo1}, with $y_0=y_1$, $\psi=\tilde \psi$, $c_U=1, c_P=1, c_A=c_{154,1}$ and calling $c_{155,1}=c_{150}$,
\begin{eqnarray*}
\|A\big(\frac{D_0}{\omega}\big)b\big(\frac{y-y_1}{r}\big)u\|_{H^1} \le c_{155,1} \exp({-c_{132}\mu^{\alpha^2}}),\quad
\forall \omega \le \frac{\mu^\alpha}{3 c_{131}}.
\end{eqnarray*}
Step $j >1$.\\
Here we consider $y_j \in \cal{E}$ and $u_j$ defined in \eqref{uj}
and notice that $\mbox{supp}(u_j) \subseteq \mbox{supp}(u)\backslash \cup_{k=1}^{j-1}B_{r/2}(y_k)$ and that
$u_j = u$ on $\mbox{supp}(u)\backslash  \cup_{k=1}^{j-1}B_{r}(y_k)$.\\
Calling $\tilde \psi(y) = \psi(y) - \psi(y_j)$ we notice that
by construction $u_j$ is such that $\supp(u_j) \cap B_{2R}(y_j) \subset \{y; \psi(y)\le \psi(y_j)\}$.
We then will apply Lemma \ref{lm_expo1}, with $\psi=\tilde \psi$ and $y_0=y_j$.\\
We start by calculating the first estimate in \eqref{ipo2}:
\begin{eqnarray}\label{ipo5}
\|u_j\|_{H^1(B_{2R}(y_j))} \le \|u\|_{H^1(B_{2R}(y_j))} + |\nabla\prod_{k=0..j-1} \Big(1-b\big(\frac{2(y-y_k)}{r}\big)\Big)|_{C^0} \|u\|_{L^2(B_{2R}(y_j))}\\
\le 2\big(1+ j \frac{|b'|_{C^0}}{r}\big) \|u\|_{H^1(B_{2R}(y_j))}.\nonumber
\end{eqnarray}
Since $j\le N$ we get a uniform bound for all $j$
\begin{eqnarray}\label{c152}
\|u_j\|_{H^1(B_{2R}(y_j))} \le c_{152},\quad c_{152}=2\big(1+ N \frac{|b'|_{C^0}}{r}\big).
\end{eqnarray}
Then we consider the second estimate in \eqref{ipo2}
\begin{eqnarray}\label{ipo6}
\|Pu_j\|_{L^2(B_{2R}(y_j))} \le \|Pu\|_{L^2(B_{2R}(y_j))} + \|\big[P,\prod_{k=0..j-1} \Big(1-b\big(\frac{2(y-y_k)}{r}\big)\Big)\big] u\|_{L^2(B_{2R}(y_j))}
\\ \nonumber
\le 1+2j\big(1+ n^2|g^{kr}|_{C^0} + |h^s|_{C^0}\big)\big(\frac{|b'|_{C^0}}{r} + \frac{|b''|_{C^0}}{r^2}+ (j-1)\frac{|b'|^2_{C^0}}{r^2}\big)\|u\|_{H^1(B_{2R}(y_j))} \le c_{153},
\end{eqnarray}
where the commutator is, for $b_k=b(2(y-y_{k})/r)$:
\begin{eqnarray*}
[b_k,P]u = (-P_2 b_k)u + 2 D_0 b_k D_0 u - 2 g^{hr}(x)D_{x_h} b_k D_{x_r} u + i h^s(x) D_{x_s} b_k u
\end{eqnarray*}
with $P_2 = -D_0^2 + g^{hr}(x)D_hD_r$  and, for all $j\le N$,
\begin{eqnarray}\label{c153}
c_{153} = 1+2N\big(1+ n^2|g^{kr}|_{C^0} + |h^s|_{C^0}\big)\big(\frac{|b'|_{C^0}}{r} + \frac{|b''|_{C^0}}{r^2}+ (N-1)\frac{|b'|^2_{C^0}}{r^2}\big).
\end{eqnarray}
The third estimate \eqref{ipo3} requires information of Step $j-1$. \\
Like in \eqref{bj-0}, from the definition of $l$
and applying Lemma \ref{lm_util}.(b) with $f = b\big(\frac{y-y_j}{R}\big)$, $h=l$, $\beta_1=\beta$, $\mu = \beta \mu_{j}$ we get
\begin{eqnarray}\label{bj}
\|A\big(\frac{D_0}{\mu_j}\big)b\Big(\frac{y-y_j}{R}\Big) l(y)Pu_{j}\|_{L^2}\le \|A\big(\frac{D_0}{\beta\mu_{j}}\big)l(y)Pu_{j}\|_{L^2} + c_{153}\tilde c_{107} \exp({- \mu_{j}^{\alpha}}).
\end{eqnarray}
where $ \beta >2$ is the parameter \eqref{beta}.
\\
The first term on the right hand side of \eqref{bj} becomes
\begin{eqnarray}\label{lj}
\|A\big(\frac{D_0}{\beta\mu_j}\big)l(y)Pu_j\|_{0} \le
\|A\big(\frac{D_0}{\beta \mu_{j}}\big)l(y)Pu_{j-1}\|_{0}+\|A\big(\frac{D_0}{\beta \mu_{j}}\big)l(y)b_{j-1}Pu_{j-1}\|_{0}
\nonumber \\
+\|A\big(\frac{D_0}{\beta \mu_j}\big)[b_{j-1},P]u_{j-1}\|_{0}.
\end{eqnarray}
One can find recursively the estimate above  for $j=1$ by using \eqref{ipo1}
with $c_{162,1}=1$, and stating for $j-1$
\begin{eqnarray}\label{lj-1}
\|A\big(\frac{D_0}{\beta \mu_{j-1}}\big)l(y)Pu_{j-1}\|_{0} \le c_{162,j-1} e^{-\mu_{j-1}^\alpha},
\end{eqnarray}
with $c_{162,j-1}$ a positive parameter.\\
By the inductive hypothesis and in analogy with \eqref{bj-0},
\begin{eqnarray}\label{bj-1}
\|A\big(\frac{D_0}{\mu_{j-1}}\big)b\Big(\frac{y-y_{j-1}}{R}\Big) l(y)Pu_{j-1}\|_{0}
\le c_{154,j-1}e^{-\mu_{j-1}^\alpha},
\end{eqnarray}
where $c_{154,j-1}=c_{162,j-1} + c_{153}\tilde c_{107}$.\\
The first term on the right hand side of \eqref{lj} becomes, for
$\mu_j \le \mu_{j-1}/2$,
$$\|A\big(\frac{D_0}{\beta \mu_j}\big) l(y)Pu_{j-1}\|_0 \le \|A\big(\frac{ D_0}{\beta \mu_{j-1}}\big)l(y)Pu_{j-1}\|_0.$$
The second term on the right hand side of \eqref{lj} becomes, for $2\mu_j \le \mu_{j-1}/3$,
\begin{eqnarray*}
\|A\big(\frac{D_0}{\beta \mu_{j}}\big)l(y)b_{j-1}Pu_{j-1}\|_{0} \le
\|A\big(\frac{D_0}{\beta \mu_{j-1}}\big)l(y)Pu_{j-1}\|_{0} + c_{153} c_{164}\exp(- c_{165}  \mu_{j-1}^\alpha )
\end{eqnarray*}
where by lemma \ref{lm_util}.b)  with $f=b_{j-1}$, $h=l$, $\beta_1=3$, $\mu=\beta \mu_{j-1}$, we have $c_{164} = c_{107}$, $c_{165} = c_{106}\beta^\alpha=c_{117}\beta^\alpha/(3^\alpha 4)$. Notice that $c_{165}$ and $c_{164}$ are independent of $y_j$, since the calculation is invariant up to translations.
\\
The term with the commutator in \eqref{lj} can be split in the following way:
\begin{eqnarray*}
I_1 + I_2 + I_3 &=& \|A\big(\frac{D_0}{\beta\mu_j}\big)\Big((-P_2 b_{j-1}) + ih^s(x) D_{x_s} b_{j-1} \Big)u_{j-1}\|_{L^2}\\
&&+\|A\big(\frac{D_0}{\beta \mu_j}\big)\Big(2 D_0 b_{j-1} D_0 u_{j-1}  \Big)\|_{L^2}\\
&&+\|A\big(\frac{D_0}{\beta \mu_j}\big)\Big(2 g^{kr}(x)D_{x_k} b_{j-1} D_{x_r} u_{j-1} \Big)\|_{L^2}.
\end{eqnarray*}
We notice that the localizer $\displaystyle b\Big(\frac{(y-y_{j-1})}{r}\Big)=1$ on supp$[b_{j-1},P]u$, then we multiply $u_{j-1}$ in $I_1$ with it to keep its support in $B_{2r}(y_{j-1})$ in order to use the estimates of Step $j-1$. For $\displaystyle \nu \le \frac{\mu_{j-1}^{\alpha}}{3 c_{131}}$ a positive parameter, one has
\\
\begin{eqnarray*}
I_1 \le \|A\big(\frac{D_0}{\beta \mu_j}\big)\Big((-P_2 b_{j-1}) + ih^s(x) D_{x_s} b_{j-1} \Big)A\big(\frac{D_0}{\nu}\big)b\big(\frac{(y-y_{j-1})}{r}\big)u_{j-1}\|_{L^2}\\
+\|A\big(\frac{D_0}{\beta \mu_j}\big)\Big((-P_2 b_{j-1}) + ih^s(x) D_{x_s} b_{j-1} \Big)(1-A\big(\frac{D_0}{\nu})\big)b\big(\frac{(y-y_{j-1})}{r}\big)u_{j-1}\|_{L^2}
\\
\le c_{155,j-1} |-P_2 b_{j-1} + h^s(x)D_{x_s}b_{j-1}|_{C^0} \exp(-c_{132} \mu_{j-1}^{\alpha^2})\\
+c_{107} c_{152}\big(1+ n^2|g^{kr}|_{C^0}  + |h^s|_{C^0} \big)\exp(-c_{106}\nu^{\alpha})
\end{eqnarray*}
Notice that the first estimate on the right hand side is done by using the inductive hypothesis and by applying to the term $\|A\big(\frac{D_0}{\nu}\big)b\big(\frac{(y-y_{j-1})}{r}\big)u_{j-1}\|_{L^2}$ Lemma \ref{lm_expo1}   with coefficients
$c_U=c_{152}$, $c_P=c_{153}$, $c_A = c_{154,j-1}$ defined in \eqref{bj-1} and then
calling $c_{155,j-1}$ the resulting coefficient $c_{150}=c_{150}(c_{152},c_{153},c_{154,j-1})$.\\
For the second term on the right hand side we assume that $2\beta \mu_j \le \nu/3$ in order to write, both with
$s=0$ (i.e. $L^2$) and $s=1$ (i.e $H^1$):
\begin{eqnarray}\label{ineq}
\|A\big(\frac{D_0}{\beta \mu_j}\big) v\|_s \le \|A\big(\frac{3 D_0}{\nu}\big)v\|_s.
\end{eqnarray}
Then we apply Lemma \ref{lm_util}.(a) with $\beta_1=3$, $\mu = \nu$ and $f$ of this form (after moving out of the norm $g^{kh}, h^s$ and the complex variable)
\begin{eqnarray}\label{f1}
f_1(t,x) = \partial^2_t b_{j-1} + \partial_{x_r}\partial_{x_h}b_{j-1}+ \partial_{x_s} b_{j-1},
\end{eqnarray}
involving just  derivatives of smooth functions in $C^\infty_0(\R^n, G_0^{1/\alpha}(\R_t))$.\\
To recover an expressions for the coefficients
we recall that
the $\kappa_2-$derivative of $h\in G^{1/\alpha}_0$ (with Gevrey constant $c_{1,h}$) is:
\begin{eqnarray}\label{def_der}
|D^{\kappa_1} (D^{\kappa_2} h(s))| &\le& c_{1,h}^{|\kappa_1|+|\kappa_2|+1} (|\kappa_1|+|\kappa_2|+1)^{(|\kappa_1|+|\kappa_2|)/\alpha} \\ \nonumber
&\le& c_{1,h}^{|\kappa_2|+|\kappa_1|+1} 
2^{|\kappa_2|(|\kappa_1|+|\kappa_2|)/\alpha}
e^{|\kappa_2| |\kappa_1|/\alpha}(|\kappa_1|+1)^{|\kappa_1|/\alpha}, \quad s \in \supp(h).
\end{eqnarray}
In our case we must just consider time derivatives, in order to estimate \eqref{fourier}.
Since the translations play no role for the Fourier transform, we can calculate coefficients independently upon $j$.
Call
$c_{1,b},c_{1,b'},c_{1,b''}$
the Gevrey coefficients of the functions
$b(y),\,D_xb(y),\,D^2_xb(y)$.
Then, define
\begin{eqnarray*}
c_{f_1} =
2^{2/\alpha}e^{2/\alpha}c^2_{1,b}(r)+ c_{1,b''}(r) + c_{1,b'}(r).
\end{eqnarray*}
Analogously we can get the values for the functions associated with $I_1, \,I_2$ (see below for definition of $f_2, \,f_3$):
\begin{eqnarray*}
c_{f_2} =
2^{1/\alpha+1}e^{1/\alpha}c_{1,b}(r)+ 2^{2/\alpha+1}e^{2/\alpha}c^2_{1,b}(r),
\, \quad
c_{f_3} =
4 c_{1,b'}(r)+2 c_{1,b''}(r).
\end{eqnarray*}
In analogy to the computations above we can calculate
 $c_{Df_2}, c_{Df_3}$ (the Gevrey parameters of $Df_2= \partial_t(2 \partial_t b_{j-1})+\nabla_x (2 \partial_t b_{j-1})$ , $Df_3=
 \partial_t(2 \partial_{x_k} b_{j-1}))+\nabla_x (2 \partial_{x_k} b_{j-1})$), in order to apply Lemma \ref{lm_util}.(c) with $H^1-$norms.
\\
Now call $c_{comm} = c_{f_1} + c_{f_2} + c_{f_3} + c_{Df_2} + c_{Df_3}$ the biggest Gevrey parameter, common to all the functions $f_1,f_2,f_3$ inside the commutator, set $c_3 = c_{comm}\cdot \max_i \mbox{Vol}(\supp(f_i))$,
 and $c_{117}=1/(ec_3)^{\alpha}$.
Then, define the following coefficients in Lemma \ref{lm_util}, that are independent of the center point $y_j$:
\begin{eqnarray}\label{c106}
c_{106}=\frac{1}{(3^{\alpha}4)(ec_3)^{\alpha}},\quad
c_{107}=c_{108}=\big(c_3\frac{8}{3}\Gamma\Big(\frac{1}{\alpha}\Big)\frac{1}{\alpha (c_{117})^{1/\alpha}}\frac{1}{(\alpha c_{106})^{\frac{1}{\alpha-1}}} \big)^{1/2}.
\end{eqnarray}
Next we estimate $I_2$ moving the derivative $D_0$ of $u_{j-1}$ in front of the integrand, then multiplying $u_{j-1}$ with $b\big(\frac{y-y_{j-1}}{r}\big)$, and finally adding and subtracting operators $A(D_0/\nu)$ with $\nu\le \frac{\mu_{j-1}^{\alpha}}{3 c_{132}}$,
\begin{eqnarray*}
I_2 &\le& \|A\big(\frac{D_0}{\beta\mu_j}\big)2 D_0 b_{j-1} \Big[A\big(\frac{D_0}{\nu}\big)+ (1-A\big(\frac{D_0}{\nu}))\Big]  b\big(\frac{(y-y_{j-1})}{r}\big) u_{j-1}\|_{H^1}
\\
&&+\|A\big(\frac{D_0}{\beta\mu_j}\big)D_0(2 D_0 b_{j-1})\Big[A\big(\frac{D_0}{\nu}\big)+ (1-A\big(\frac{D_0}{\nu})\big)\Big] b\big(\frac{(y-y_{j-1})}{r}\big) \Big]u_{j-1}\|_{L^2}
\\
&\le&
 c_{155,{j-1}} |2  D_0 b_{j-1}|_{C^1}\exp(-c_{132} \mu_{j-1}^{\alpha^2})
 + c_{152} c_{108}  \exp(-c_{106}\nu^{\alpha})\\
&&+ c_{155,j-1}  |D_0(2  D_0 b_{j-1})|_{C^0} \exp(-c_{132} \mu_{j-1}^{\alpha^2})
+ c_{152}c_{107}  \exp(-c_{106}\nu^{\alpha})
%
\end{eqnarray*}
To get the estimate above we apply twice Lemma \ref{lm_expo1} with the same parameters as in $I_1$. Next using \eqref{ineq} we estimate the terms $\|A\big(\frac{3 D_0}{\nu}\big) f (1-A\big(\frac{D_0}{\nu})) v \|_s$ using Lemma \ref{lm_util} c) and a).\\
Proceeding like with $I_1$, we have to calculate the time-Fourier transform of:
\begin{eqnarray*}
f_2(y)= 2 \partial_t b_{j-1} + 2 \partial_t^2 b_{j-1},
\end{eqnarray*}
and the associated coefficients are \eqref{c106}.
Finally, moving the derivative $D_{x_r}$ of $u_{j-1}$ in front of the integrand,  multiplying $u_{j-1}$ with $b\big(\frac{y-y_{j-1}}{r}\big)$,  adding and subtracting operators $A(D_0/\nu)$ with $\nu\le \frac{\mu_{j-1}^{\alpha}}{3 c_{132}}$,
\begin{eqnarray*}
I_3&\le& \|A\big(\frac{D_0}{\beta\mu_j}\big) 2 g^{kr}(x) D_{x_k} b_{j-1} \Big[A\big(\frac{D_0}{\nu}\big)+ (1-A\big(\frac{D_0}{\nu}))\Big]  b\big(\frac{(y-y_{j-1})}{r}\big) u_{j-1}\|_{H^1}
\\&&+\|A\big(\frac{D_0}{\beta\mu_j}\big)D_{x_r}(2 g^{kr}(x)  D_{x_k} b_{j-1})\Big[A\big(\frac{D_0}{\nu}\big)+ (1-A\big(\frac{D_0}{\nu})\big)\Big]  b\big(\frac{(y-y_{j-1})}{r}\big) \Big] u_{j-1}\|_{L^2} \\
&\le&
  c_{155,j-1} |2 n g^{kr}  D_k b_{j-1}|_{C^1} \exp(-c_{132} \mu_{j-1}^{\alpha^2})
+c_{152} c_{108} n^2|g^{kr}|_{C^1}\exp(-c_{106}\nu^{\alpha})\\
&&+ c_{155,j-1} |D_r(2 g^{kr} D_k b_{j-1})|_{C^0} \exp(-c_{132} \mu_{j-1}^{\alpha^2})
+c_{107} c_{152}n^2 |g^{kr}|_{C^1}\exp(-c_{106}\nu^{\alpha}).
\end{eqnarray*}
Proceeding like with $I_1$ we have to calculate the time-Fourier transform of
\begin{eqnarray*}
f_3(y)= 2 \partial_{x_k} b_{j-1} + 2\partial_{x_k} b_{j-1} + 2  \partial^2_{x_k} b_{j-1}.
\end{eqnarray*}
and the associated coefficients are \eqref{c106}.
%
%
By collecting all the terms of the estimate for \eqref{lj}, the bound for \eqref{bj} becomes
\begin{eqnarray}\label{mixed}
\|A\big(\frac{D_0}{\mu_j}\big)b\big(\frac{y-y_j}{R}\big)Pu_{j}\|_{0}\le
\|A\big(\frac{D_0}{\beta \mu_j}\big)l(y) Pu_{j} \|_{L^2} + c_{153} \tilde c_{107} \exp(-\mu_j^\alpha) \nonumber
\\
\le c_{162,j}  \Big(\max\big(\exp(-\mu_{j-1}^{\alpha}),\exp(-c_{165} \mu_{j-1}^{\alpha}),\exp(-c_{132} \mu_{j-1}^{\alpha^2}),\exp(-c_{106}\nu^{\alpha})\big)\Big) \nonumber
\\+ c_{153} \tilde c_{107} \exp(-\mu_j^\alpha)\qquad\qquad\qquad\qquad\qquad\qquad\qquad\qquad\qquad\qquad\qquad
\end{eqnarray}
where, for all $j\ge 2,$
\begin {eqnarray*}
c_{162,j} &=& 2 c_{162,j-1}+ c_{153}c_{164}+c_{155,j-1} |-P_2 b_{j-1} + h^s(x)D_{x_s}b_{j-1}|_{C^0} \\
&&+c_{107} c_{152}\big(1+ n^2|g^{kr}|_{C^0}  + |h^s|_{C^0} \big)
\\
&& + c_{155,{j-1}} |2  D_0 b_{j-1}|_{C^1}
 + c_{152} c_{108}  \\
&&+ c_{155,j-1}  |D_0(2  D_0 b_{j-1})|_{C^0}
+ c_{152}c_{107}
\\
&&+  c_{155,j-1} |2n g^{kr}  D_k b_{j-1}|_{C^1}
+c_{152} c_{108} n^2|g^{kr}|_{C^1} \\
&&+ c_{155,j-1} |D_r(2  g^{kr} D_k b_{j-1})|_{C^0}
+c_{107} c_{152}n^2|g^{kr}|_{C^1}.
\end{eqnarray*}
In order to write \eqref{mixed} in the form
\begin{eqnarray*}
\|A\big(\frac{D_0}{\mu_j}\big)b\big(\frac{y-y_j}{R}\big)Pu_{j}\|_{0} \le c_{154,j}e^{-\mu_j^\alpha},
\end{eqnarray*}
we set in \eqref{ipo3}
\begin{eqnarray*}
c_{154,j} &=& c_{162,j} + c_{153} \tilde c_{107}
\end{eqnarray*}
and we look for $\mu_j$ of the form
$\mu_j =  c_{156,j}\mu_{j-1}^{\alpha}$ such that, for $\beta$ as in \eqref{beta}, and collecting all the constraints on $\mu_j$ used in the proof,
\begin{eqnarray}\label{156}
\mu_j \le \frac{1}{6\beta} \nu = \frac{1}{6\beta} \frac{\mu_{j-1}^{\alpha}}{3 c_{131}}, \quad \mbox{and consider} \quad \nonumber\\
c_{156}=c_{156,j} =\min \Big( \frac{1}{18\beta c_{131}},c_{132}^{1/\alpha},c_{165}^{1/\alpha}, \frac{c_{106}^{1/\alpha}}{3c_{131}}\Big).
\end{eqnarray}
The right hand side of \eqref{156} is independent upon $j$ due to the  definition of $c_{165}$, $c_{106}$ in \eqref{c106}, and the fact that $c_{132}$ and $c_{131}$
do not change during the iteration (see proof of Lemma \ref{lm_expo1}).
Therefore we define a parameter $c_{156}( < 1)$ independent of $j$ by the formula \eqref{156}.
We then estimate $\mu_j$ from below
\begin{eqnarray*}
\mu_j = c_{156} \mu_{j-1}^{\alpha} = c_{156} (c_{156} \mu_{j-2}^{\alpha})^{\alpha} =c_{156} (c_{156} (c_{156} \mu_{j-3}^{\alpha})^{\alpha})^{\alpha} \\
= c_{156}^{1+\alpha+ \alpha^2 +...+\alpha^{j-2}} \mu^{\alpha^{j-1}}\ge c_{156}^{1/(1-\alpha)} \mu^{\alpha^{j-1}}
\end{eqnarray*}
where we apply $1< \sum_{m=0}^{j-2} \alpha^m \le 1/(1-\alpha)$. Finally, in order to obtain the requested condition $\mu_j \ge 1$, we set
\begin{eqnarray*}
c_{156}^{1/(1-\alpha)} \mu^{\alpha^{j-1}} \ge 1, \; \forall j\in [1,N],
\mbox{  which implies } \mu \ge  c_{156}^{-\frac{1}{\alpha^{N-1}(1-\alpha)}}
\end{eqnarray*}
and we finally find $c_{159} = c_{156}^{-\frac{1}{\alpha^{N-1}(1-\alpha)}}$ in the line before \eqref{ipo1}.\\
By applying Lemma \ref{lm_expo1} with $c_U=c_{152}$, $c_P=c_{153}$, $c_A=c_{154,j}$, \\
$c_{155,j}=c_{150}(c_{152},c_{153},c_{154,j})$, one obtains the last inequality \eqref{ipo4} of the Theorem.
\end{proof}

\begin{remark}\label{remarks}
1. In order to work with the pseudodifferential operators $e^{-\epsilon |D_0|^2/2\tau}$ and $A(\beta |D_0|/\omega)$ one needs smooth functions in the time variable. Hence one should first operate a proper regularization in the time variable. We proceed in the same way as done in
\cite{EINT} or \cite{KKL}. Observe that the functions $u, Pu$ and $u_j, Pu_j$ are always multiplied by a smooth localizer when $A(D_0)$ and $e^{-D_0}$ are applied to them.
\\\\
2. About the construction.\\
a) Assumption A1 (and analogously A3) implies that:
$(\supp(u)\cap \Omega_0) \subset \{y; \psi(y) \le  \psi_{max}\}$; and that the level sets $\{y\in  \Omega_0; \psi(y) = c\}$, with $c  \in  [\psi_{min},\psi_{max}]$, are contained in $\overline{\Upsilon \cup \Omega_a}$.
 An example of this construction is in section \ref{sec3}.
\\
b) Assumption A1 and A3 can be relaxed in this way.
Instead of defining $\psi_{min}, \Omega_a$ (or $\Lambda_k$),
we just observe that the assumptions on $\psi, \Omega_0$ together with  $(\supp(u)\cap \Omega_0) \subset \{y; \psi(y) \le  \psi_{max}\}$ imply the existence of a non empty set $\Omega_a \subset \Omega_0$ for which Theorem \ref{global} holds.
$\Omega_a$ can be defined as $\cup_{j} B_{r}(y_j)$, with $y_j \in {\cal E}$ (see Assumption 4) such that the support condition $\supp(u_j)\cap B_{2R}(y_j) \subset \{y \in \Omega_0; \psi \le \psi(y_j)\}$ is fulfilled for every $j$. This construction requires to follow step by step the local iteration and sometimes this is difficult. That is why the a-priori knowledge that the level set $\{y\in \Omega_0\setminus \overline{\Upsilon} ;\psi(y)=\psi_{min}\}$ is strictly contained in $\Omega_0$ is useful, even if it excludes for example the case where the level sets of $\psi$ are parallel hyperplanes and $\supp(u)$ is on one side of one of them.
\\\\
3. In Theorem \ref{iter} we have worked under the assumptions
$$\|u\|_{H^1(\Omega_1)}= 1,\quad \|Pu\|_{L^2(\Omega_1)}  < 1, \quad \|A(D_0/(\beta\mu))l(y)Pu\|_{L^2} \le \exp(-\mu^{\alpha}),$$
 in order to apply Theorem \ref{global} easily. One can generalize the assumptions by setting
$$\|u\|_{H^1(\Omega_1)} \le c_{U,g},\quad \|Pu\|_{L^2(\Omega_1)}  \le c_{P,g},\quad
\|A(D_0/(\beta\mu))l(y)Pu\|_{L^2} \le c_{A,g}\exp(-\mu^{\alpha}),$$
and by changing the coefficients $c_{152},c_{153},c_{154,j},c_{155,j}$ accordingly. \\
This gives a statement of global stability of the unique continuation for low temporal frequencies.
\\\\
4.
Notice that Lemma \ref{lm_expo}, Lemma \ref{lm_expo1},  and Theorems \ref{uj}, \ref{global},\ref{global2}
can be reformulated for localizers supported on cylinders (instead of on balls), defined on cylinders
$$C_s(y_0)=\{(t,x):\, |t-t_0|\le s,\; |x-x_0|\le s\},$$
by observing that:
\begin{eqnarray*}
C_{r/\sqrt{2}}(y_0) \subset B_r(y_0) \subset B_R(y_0) \subset C_{\sqrt{2}R}(y_0).
\end{eqnarray*}
The advantage is to be able to reduce the assumptions on the regularity of the $x-$localizers.
Namely, one can replace $b(y)$ in $G_0^{1/\alpha}(\R^{n+1})$ with the product $b_{ti}(t)b_{sp}(x)$, where
 $b_{ti} \in G_0^{1/\alpha}(\R_t)$ and $b_{sp} \in C^{2}_0(\R^n)$ are  supported in $B_{2}$, equal to one in $B_1$ and $0\le b_{ti},b_{sp} \le 1$.
 \\
One can also replace the global localizer $l$ in the proof of Theorems \ref{uj} with $l(y)=\sum_{m=1}^M l_{ti,m}(t)l_{sp,m}(x)$,
with $l = 1$ in $\{y\in \Omega_0;\, \dist(y,\p\Omega_0)\ge \frac{R_1}{4}\}$ , which contains $\cup_{k=1}^N C_{\sqrt{2}R}(y_k)$, and
supp$(l) \subset \Omega_0$, and $l_{ti,m} \in C_0^{\infty}(\R)$ and $l_{sp,m} \in C_0^{2}(\R^n)$.
\end{remark}

\subsection{Proof of Theorem \ref{global}}
\begin{proof}
We consider two cases:\\
Case A. Assume $\|Pu\|_{L^2(\Omega_1)} \ge \exp({-c_{159}})\|u\|_{H^1(\Omega_1)}$, where
 $c_{159}:= {c_{156}^{-\frac{1}{(1-\alpha)}\frac{1}{\alpha^{N-1}}}} > 1$ has been defined in \eqref{156}.
 Then the estimate is trivial
\begin{eqnarray*}
\|u\|_{L^{2}(\Omega_a)} \le \|u\|_{H^1(\Omega_1)} \le \big(\ln (1+ \exp({c_{159}}))\big)^{\theta} \frac{\|u\|_{H^1(\Omega_1)}}{\ln \Big(1 + \frac{\|u\|_{H^1(\Omega_1)}}{\|Pu\|_{L^2(\Omega_1)}}\Big)^{\theta}}\,.
\end{eqnarray*}
Case B.\;
Assume now $\|Pu\|_{L^2(\Omega_1)} < \exp({-c_{159}})\|u\|_{H^1(\Omega_1)}$ and without restriction of generality take $\|u\|_{H^1(\Omega_1)}=1$. Our aim is to consider separately estimates for low and high temporal frequencies.
Let $A(D_0)$ be a pseudo-differential operator with symbol $a(\xi_0)\in G_0^{1/\alpha}$,
defined in Assumption A2.
Let  $b\in G_0^{1/\alpha}(\R^{n+1})$ be another localizer with support like in Assumption A2. \\
The parameter $\alpha \in (0,1)$ is then common to all the localizers in time, space and temporal frequency.
Let $y_j\in {\cal E}$ be the set of the points defined in Assumption A4 and consider the balls $B_r(y_j)$ centred in those points.
Observe that according to our definitions we have
:\\
\begin{equation*}
B_r(y_j) \subset \mbox{supp}\,b((y-y_j)/r) \subseteq B_{2r}(y_j) \subset B_R(y_j) \subset \mbox{supp}\,b((y-y_j)/R)\subseteq B_{2R}(y_j)\,,
\end{equation*}
Recall that $b\big(\frac{y-y_j}{r}\big) = 1$ in $B_r(y_j)$ and observe that
$u_j = u$ in $B_r(y_j) \backslash  {\cup}_{s=1}^{j-1} B_r(y_s)$, with $u_j$ defined in \eqref{uj}.
\\
We then cover $\Omega_a$ by the disjoint sets $B_r(y_j) \backslash  {\cup}_{s=1}^{j-1} B_r(y_s)$
and operate the initial estimate as follows:
\begin{eqnarray}\label{formulaA1}
\|u\|_{L^2(\Omega_a)} \le \|u\|_{L^2(B_r(y_1))}+\|u\|_{L^2(B_r(y_2) \backslash  B_r(y_1))}+\|u\|_{L^2(B_r(y_3)\backslash {\cup}_{s=1}^2 B_r(y_s))}
\nonumber
\\
+...+\|u\|_{L^2(B_r(y_N) \backslash {\cup}_{s=1}^{N-1} B_r(y_s))}
\\ \nonumber
=\|b\big(\frac{y-y_1}{r}\big)u\|_{L^2(B_r(y_1))}+
\|b\big(\frac{y-y_2}{r}\big)u_2\|_{L^2(B_r(y_2) \backslash B_r(y_1))}
\\ \nonumber
+
...+\|b\big(\frac{y-y_N}{r}\big)u_N\|_{L^2(B_r(y_N) \backslash {\cup}_{s=1}^{N-1} B_r(y_s))}
\\ \nonumber
\le \sum_{j=1}^N \|A\big(\frac{D_0}{\omega}\big)b\big(\frac{y-y_j}{r}\big)u_j\|_{L^2}
+  \sum_{j=1}^N \|\Big(1-A\big(\frac{D_0}{\omega}\big)\Big)b\big(\frac{y-y_j}{r}\big)u_j\|_{L^2}:= H_1 + H_2.
\end{eqnarray}
In the last estimate we have chosen $\omega >0$ and split all the terms in their low and high temporal component,
i.e.
$$\|b\big(\frac{y-y_j}{r}\big)u_j\|_{L^2(B_r(y_j))} \le \|A\big(\frac{D_0}{\omega}\big)b\big(\frac{y-y_j}{r}\big)u_j\|_{L^2}
+ \|\Big(1-A\big(\frac{D_0}{\omega}\big)\Big)b\big(\frac{y-y_j}{r}\big)u_j\|_{L^2}.$$
%
To estimate $H_2$ we have
\begin{eqnarray}\label{formulaA2}
\|\Big(1-A\big(\frac{D_0}{\omega}\big)\Big)b\big(\frac{y-y_j}{r}\big)u_j\|_{L^2}
\le \|(1-a(\xi_0/\omega))\mathcal{F}_{t \to \xi_0} \big(b((y-y_j)/r) u_j(y)\big) \|^2_{L^2}\quad
\\ \nonumber
\le \frac{1}{{\omega}^2}\int_{|\xi_0|>\omega}\int_{\R^{n}} |\xi_0\mathcal{F}_{t \to \xi_0} (b((y-y_j)/r)u_j(t,x)) |^2 dx d\xi_0
\le \frac{1}{{\omega}^2}\|b((y-y_j)/r)u_j(y)\|^2_{H^1}.
\end{eqnarray}
To estimate $H_1$ we first consider $\mu > c_{159}$ and we set $\|Pu\|_{L^2(\Omega_1)}= e^{-\mu}$,
that implies $\|A\big(\frac{D_0}{\zeta \mu}\big)l(y)Pu\|_{0}\le e^{-\mu^{\alpha}}$, for all $\zeta >0$. Then we choose
$\omega = \mu_N^\alpha/(3 c_{131})$ and apply Theorem \ref{iter} to each term of the sum:
\begin{eqnarray}\label{formulaA3}
\|A\big(\frac{D_0}{\omega}\big)b\big(\frac{y-y_j}{r}\big)u_j\|_{L^2} \le c_{155,N} \exp(- c_{132}\mu_N^{\alpha^2}).
\end{eqnarray}
This is possible since $\mu_N>1$ is the smallest time frequency of the set $\mu_j$, while $c_{155,N}$ is the biggest coefficient $c_{155,j}$,  j=1,..,N.\\
Collecting the two bounds and reminding that $\mu_N \ge c_{156}^{1/(1-\alpha)}\mu^{\alpha^{N-1}} > 1$, we have:
\begin{eqnarray*}
\|u\|_{L^2(\Omega_a)} &\le& N c_{155,N} \exp(- c_{132}\mu_N^{\alpha^2})
+  \frac{3 c_{131}}{\mu_N^\alpha}  \sum_{j=1}^N\|b\big(\frac{y-y_j}{r}\big)u_j\|_{H^1(\Omega_1)}\\
&\le& N c_{155,N} \exp(- c_{132}\big(c_{156}^{1/(1-\alpha)}\mu^{\alpha^{N-1}}\big)^{\alpha^2})
\\&&+  \frac{3 N c_{131}}{\big(c_{156}^{1/(1-\alpha)}\mu^{\alpha^{N-1}}\big)^\alpha}   \Big(1+ \frac{|b'|_{C^0}}{r}\Big)c_{152} \\
&\le& \frac{c_{158}}{\mu^{\alpha^{N}}}
=\frac{c_{158}}{(-\ln(\|Pu\|_{L^2(\Omega_1)}))^{\alpha^{N}}}
\le \frac{2^{\alpha^{N}} c_{158} \|u\|_{H^1(\Omega_1)}}{\Big(\ln \Big(1 + \frac{\|u\|_{H^1(\Omega_1)}}{\|Pu\|_{L^2(\Omega_1)}}\Big)\Big)^{\alpha^{N}}},
\end{eqnarray*}
where $c_{156}$ is defined in \eqref{156}
and
$$c_{158}= N c_{155,N}+ 3N c_{131} c_{152} \Big(1+ \frac{|b'|_{C^0}}{r}\Big)c_{156}^{-\alpha/(1-\alpha)}.$$
In the last step
we have applied $\ln(y) \ge \ln(1+y)/2$ for $y=\|u\|_{H^1(\Omega_1)}/ \|Pu\|_{L^2(\Omega_1)}>e$, and then we have returned to the original notation.
\\
Now we  choose $\alpha$ such that $\alpha = (\theta)^{1/N}$ and which belongs to $(0,1)$ so that,
defining $c_{160}=\big(\ln (1+ e^{c_{159}(\theta)})\big)^{\theta}+2^{\theta} c_{158}(\theta)$, we obtain the result.
\end{proof}
\\
In the previous theorem the dependency of $c_{160}$ upon $\theta$ is very bad.\\
For some applications it is better to keep $\alpha$ and $N$ independent and formulate the following consequence:
\begin{corollary}\label{global3}
Consider the assumptions of Theorem \ref{global}.
Then for every $0<\alpha < 1$ we have
\begin{eqnarray*}
 \|u\|_{L^{2}(\Omega_a)} \le c_{160} \frac{\|u\|_{H^{1}(\Omega_1)}}{\Big(\ln\Big(1+\frac{\|u\|_{H^{1}(\Omega_1)}}{\|Pu\|_{L^{2}(\Omega_1)}}\Big)\Big)^{\alpha^{N}}}
\end{eqnarray*}
with $c_{160}=\big(\ln (1+ e^{c_{159}})\big)^{\alpha^{N}}+2^{\alpha^{N}} c_{158}$. Here $N \le c_{170}$ given by \eqref{170}.
\end{corollary}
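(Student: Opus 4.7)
The plan is to redo the argument of Theorem \ref{global} verbatim, but stop one step short, before the optimization that couples $\alpha$ to $\theta$. Both $\alpha \in (0,1)$ and $N \le c_{170}$ remain free parameters throughout, and the corollary records the intermediate estimate that the iteration in Theorem \ref{iter} produces naturally, whose exponent is $\alpha^N$.

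First, split into two cases exactly as in the proof of Theorem \ref{global} according to whether $\|Pu\|_{L^2(\Omega_1)} \ge e^{-c_{159}}\|u\|_{H^1(\Omega_1)}$ or not. In the trivial case the quantity $\ln\bigl(1+\|u\|_{H^1(\Omega_1)}/\|Pu\|_{L^2(\Omega_1)}\bigr)$ is bounded above by $\ln(1+e^{c_{159}})$, and since $\alpha^N \in (0,1)$, raising to this power immediately yields
$$\|u\|_{L^2(\Omega_a)} \le \|u\|_{H^1(\Omega_1)} \le \bigl(\ln(1+e^{c_{159}})\bigr)^{\alpha^N}\,\frac{\|u\|_{H^1(\Omega_1)}}{\bigl(\ln\bigl(1+\|u\|_{H^1(\Omega_1)}/\|Pu\|_{L^2(\Omega_1)}\bigr)\bigr)^{\alpha^N}}.$$

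In the non-trivial case, normalize $\|u\|_{H^1(\Omega_1)}=1$, set $\mu := -\ln\|Pu\|_{L^2(\Omega_1)} > c_{159}$, and follow (\ref{formulaA1})--(\ref{formulaA3}) line by line: cover $\Omega_a$ by the disjoint pieces $B_r(y_j)\setminus\bigcup_{s<j}B_r(y_s)$ associated to the centers of Assumption A4, localize each term by $b\bigl((y-y_j)/r\bigr)u_j$, and split into low and high temporal frequencies via $A(D_0/\omega)$ with $\omega=\mu_N^\alpha/(3c_{131})$. The high-frequency remainder is controlled by Plancherel as in (\ref{formulaA2}), producing a term of order $1/\mu_N^\alpha$; the low-frequency pieces are controlled by Theorem \ref{iter}, producing a term of order $\exp(-c_{132}\mu_N^{\alpha^2})$ at each center. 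Using the lower bound $\mu_N \ge c_{156}^{1/(1-\alpha)}\mu^{\alpha^{N-1}}$ proved inside Theorem \ref{iter}, summing the $N \le c_{170}$ contributions, and replacing $\ln y$ by $\ln(1+y)/2$ for $y>e$ yields the bound $2^{\alpha^N} c_{158}/\bigl(\ln(1+\|u\|_{H^1(\Omega_1)}/\|Pu\|_{L^2(\Omega_1)})\bigr)^{\alpha^N}$, with exactly the same constant $c_{158}$ appearing in the proof of Theorem \ref{global}.

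Adding the two cases produces the stated inequality with $c_{160} = \bigl(\ln(1+e^{c_{159}})\bigr)^{\alpha^N} + 2^{\alpha^N}c_{158}$. There is no real obstacle here, since the corollary is essentially a bookkeeping observation: the exponent $\alpha^N$ is what the iteration delivers before any substitution, and the choice $\alpha=\theta^{1/N}$ made in Theorem \ref{global} is just one convenient way to parametrize the result. Keeping $\alpha$ and $N$ decoupled is preferable in applications where the Gevrey class $1/\alpha$ of the localizers and the covering cardinality $N$ (controlled geometrically via $c_{170}$) must be tracked separately.
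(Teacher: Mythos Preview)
Your proposal is correct and matches the paper's approach exactly: the corollary is stated in the paper without a separate proof, as it simply records the estimate obtained in the proof of Theorem~\ref{global} just before the substitution $\alpha=\theta^{1/N}$. Your write-up faithfully reproduces that argument (Cases~A and~B, the covering and frequency splitting (\ref{formulaA1})--(\ref{formulaA3}), and the final bookkeeping), so there is nothing to add.
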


\subsection{Proof of Theorem \ref{global2}}

\begin{center}
\psfrag{1}{$\Upsilon$}
\psfrag{2}{\hspace{-1mm}$\Lambda_1$}
\psfrag{3}{\hspace{-1mm}$\Omega_{0,1}$}
\psfrag{4}{\hspace{-1.5mm}$\Lambda_2$}
\psfrag{5}{\hspace{-1mm}$\Omega_{0,2}$}
\psfrag{6}{\hspace{-2mm}$\Lambda_3$}
\psfrag{7}{\hspace{-2mm}$\Omega_{0,3}$}
\includegraphics[height=6cm]{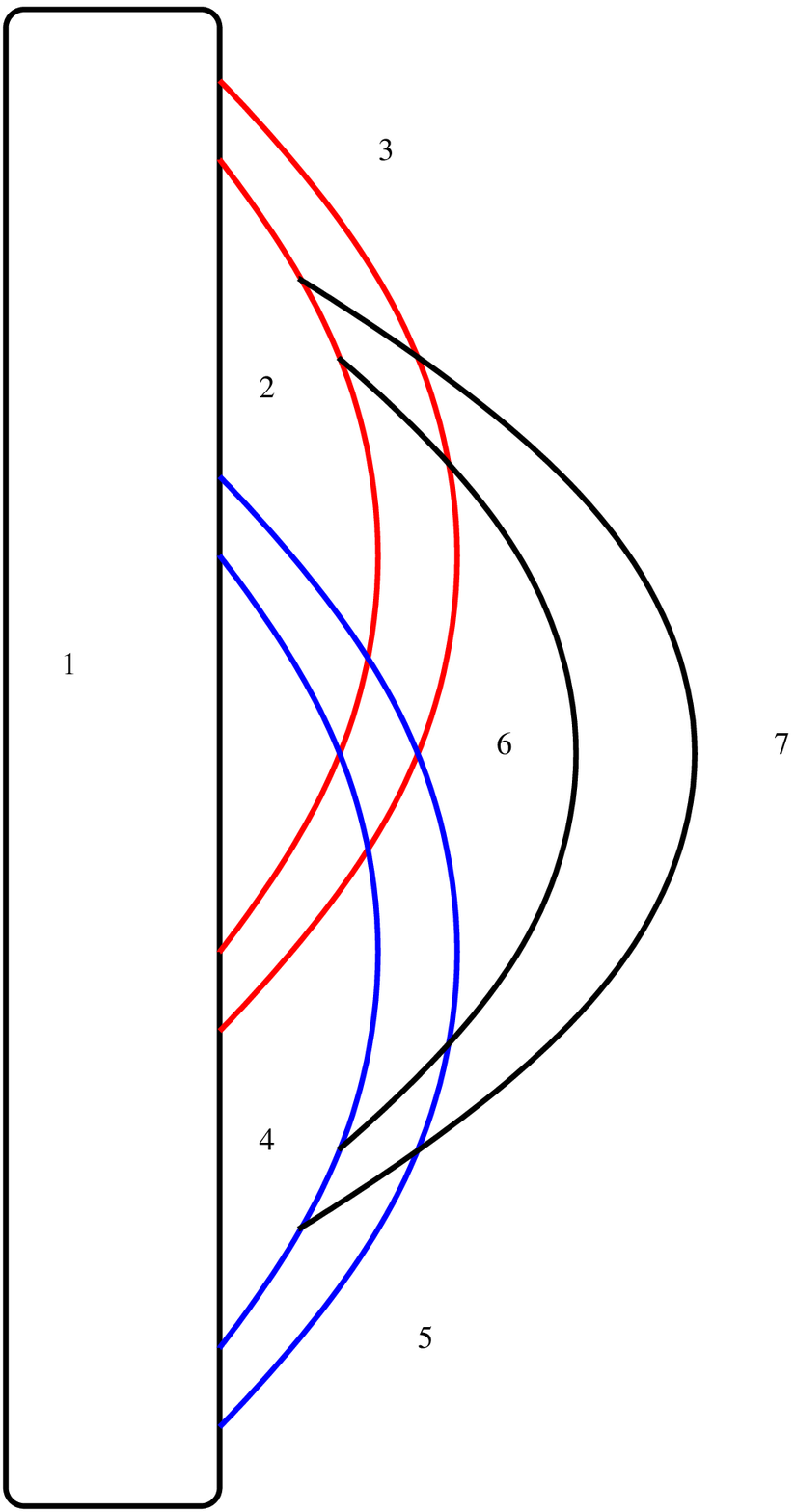}
\end{center}
{\it FIGURE 1. A possible construction of the domains $\Upsilon, \Omega_{0,k}, \Lambda_k$ }

\begin{proof}
{\it Initialization of the radii and the localizers}:
Let $A(D_0)$ be a pseudo-differential operator with symbol $a(\xi_0)\in G_0^{1/\alpha}(\R)$,
defined in Assumption A2.
We define the localizer $b(y) \in G_0^{1/\alpha}(\R^{n+1})$ with support like in Assumption A2.
\\
Using Assumption A3, in each domain $\Omega_{0,k}$ we can calculate a table like \eqref{tab1}, with $\Omega_{0,k}$ in place of $\Omega_0$, and $\Lambda_k$ in place of $\Omega_a$, and where all the constants dependency is described in Proposition \ref{prop}.
By comparing the tables of the several $\Omega_{0,k}$ we can
consider $R_2=\min_k R_{2,k}$ and find $R = \frac{1}{4}\Big(16 + \frac{1}{16}\Big)^{-1/2} R_2$ the common radius for the local stability in $\Lambda_k$. After fixing $R$, we reduce
also the values $r_k$ so that $r=\min_k r_k$ is the common radius of the ball where the $L^2$ local estimate can be performed in $\Lambda_k$.
\\
{\it Construction of the set}  {\cal E} {\it and the functions $u_j$:} For $\Omega_0=\Omega_{0,1}$, $\Omega_a=\Lambda_1$ and $\psi=\psi_1$, we define $y_j\in {\cal E}_1$ the set of the maximal points for $\psi_1$, according to the procedure in Assumption A4.
Call $N_1$ the number of points of the covering of $\Lambda_1$, i.e. $\Lambda_1 \subset \cup_{j=1}^{N_1} B_r(y_j)$.
Then we remove $\Lambda_1$ from $\Omega$ and we restart the procedure with the set $\Lambda_2$. \\
Namely,
for $\Omega_0=\Omega_{0,2}\backslash \Lambda_1$, $\psi=\psi_2$ and $\Omega_a = \Lambda_2 $, we define $y_j\in {\cal E}_2$ according to the procedure of Assumption A4, where we use the indexing $j=N_1+1,..,N_2$.\\
When also $\Lambda_2$ is covered, one skips to $\Lambda_3$ and so on.\\
At the end one can define the set of points ${\cal E}= \cup_{k=1}^K{\cal E}_k$ of cardinality $N=\sum_{k=1}^K N_k$.\\
Consider the balls $B_r(y_j)$ centred in those points and define $u_j$ as in \eqref{uj}.\\
Observe that according to our definitions we have
:
\begin{equation*}
B_r(y_j) \subset \mbox{supp}\,b((y-y_j)/r) \subseteq B_{2r}(y_j) \subset B_R(y_j) \subset \mbox{supp}\,b((y-y_j)/R)\subseteq B_{2R}(y_j)\,,
\end{equation*}
Moreover $b\big(\frac{y-y_j}{r}\big) = 1$ in $B_r(y_j)$ and observe that
$u_j = u$ in $B_r(y_j) \backslash  {\cup}_{s=1}^{j-1} B_r(y_s)$, with $u_j$ defined in \eqref{uj}.
\\
{\it Uniform parameters:}
With the previous assumptions Lemma \ref{lm_expo1} can then be applied in each ball $B_{2R}(y_j)$, with $y_j \in {\cal E}_k$ in place of $y_0$, and we call $c_{131,k}, c_{132,k}, c_{165,k}, c_{150,k}$ the related parameters, that are constant for every ball in the region $\Omega_{0,k}$, but in principle change from region to region.\\
Therefore the following uniform constants are introduced:
\begin{eqnarray}\label{156-1}
c_{131,*}= \max_{k=1..K}c_{131,k}, \;
c_{132,*}= \min_{k=1..K}c_{132,k},\;
c_{165,*}= \min_{k=1..K}c_{165,k},\;
\nonumber
\\
c_{156,*} =\min \Big( \frac{1}{18 \beta c_{131,*}},c_{132,*}^{1/\alpha}, c_{165,*}^{1/\alpha},\frac{c_{106}^{1/\alpha}}{3c_{131,*}}\Big).\;\;
\end{eqnarray}
We define $l(y) \in C^{\infty}_0(\R^{n+1})$ a localizer such that $0\le l \le 1$,  $l = 1$ on the set $\{y\in \bigcup_{k=1}^K \Omega_{0,k};\, \dist(y,\p\Omega_0)\ge \frac{R_1}{4}\}$, which contains  $\cup_{j=1}^{N}B_{2R}(y_j)$, and supp$(l) \subset \Omega_0$.
\\
In particular we consider $\beta$ as in \eqref{beta}, and the related $\tilde c_{106}$.\\
{\it Construction:}\\
We consider two cases:\\
Case A. Assume $\|Pu\|_{L^2(\Omega_1)} \ge \exp({-c_{159,*}})\|u\|_{H^1(\Omega_1)}$, where
 $c_{159,*}:= {c_{156,*}^{-\frac{1}{(1-\alpha)}\frac{1}{\alpha^{N-1}}}} > 1$.
 Then the estimate is trivial
\begin{eqnarray*}
\|u\|_{L^{2}(\Lambda)} \le \|u\|_{H^1(\Omega_1)} \le \big(\ln (1+ \exp({c_{159}}))\big)^{\theta} \frac{\|u\|_{H^1(\Omega_1)}}{\ln \Big(1 + \frac{\|u\|_{H^1(\Omega_1)}}{\|Pu\|_{L^2(\Omega_1)}}\Big)^{\theta}}\,.
\end{eqnarray*}
Case B.\;
Assume now $\|Pu\|_{L^2(\Omega_1)} < \exp({-c_{159,*}})\|u\|_{H^1(\Omega_1)}$ and without restriction of generality take $\|u\|_{H^1(\Omega_1)}=1$. Our aim is to consider separately estimates for low and high temporal frequencies.
We cover $\Lambda$ by the disjoint sets $B_r(y_j) \backslash  {\cup}_{s=1}^{j-1} B_r(y_s)$
and operate the initial estimate as follows:
\begin{eqnarray*}
\|u\|_{L^2(\Lambda)} \le \|u\|_{L^2(B_r(y_1))}+\|u\|_{L^2(B_r(y_2) \backslash  B_r(y_1))}
+...+\|u\|_{L^2(B_r(y_N) \backslash {\cup}_{s=1}^{N-1} B_r(y_s))}
\\
=\|b\big(\frac{y-y_1}{r}\big)u\|_{L^2(B_r(y_1))}
+
...+\|b\big(\frac{y-y_N}{r}\big)u_N\|_{L^2(B_r(y_N) \backslash {\cup}_{s=1}^{N-1} B_r(y_s))}
\\
\le \sum_{j=1}^N \|A\big(\frac{D_0}{\omega}\big)b\big(\frac{y-y_j}{r}\big)u_j\|_{L^2}
+  \sum_{j=1}^N \|\Big(1-A\big(\frac{D_0}{\omega}\big)\Big)b\big(\frac{y-y_j}{r}\big)u_j\|_{L^2}:= H_1 + H_2.
\end{eqnarray*}
In the last estimate we took $\omega >0$ and split all the terms in their low and high temporal component,
i.e.
$$\|b\big(\frac{y-y_j}{r}\big)u_j\|_{L^2(B_r(y_j))} \le \|A\big(\frac{D_0}{\omega}\big)b\big(\frac{y-y_j}{r}\big)u_j\|_{L^2}
+ \|\Big(1-A\big(\frac{D_0}{\omega}\big)\Big)b\big(\frac{y-y_j}{r}\big)u_j\|_{L^2}.$$
%
To estimate $H_2$ we have
\begin{eqnarray*}
\|(1-A( D_0/\omega)) b((y-y_j)/r) u_j(y) \|^2_{L^2}
\le \frac{1}{{\omega}^2}\|b((y-y_j)/r)u_j(y)\|^2_{H^1}.
\end{eqnarray*}
To estimate $H_1$ we first observe that supp$(b\big(\frac{y-y_j}{r}\big)u_j)$ is in $\Omega_{0,1}$ for $j=1,.., N_1$, where $\psi_1$ is defined. Then
supp$(b\big(\frac{y-y_j}{r}\big)u_j)$ is in $\Omega_{0,2}$ for $j=N_1+1,.., N_2$ where $\psi_2$ is defined, and so on.
\\
Consider $\mu > c_{159,*}$ and we set $\|Pu\|_{L^2(\Omega_1)}= e^{-\mu}$,
that implies $\|A\big(\frac{D_0}{\zeta \mu}\big)l(y)Pu\|_{0}\le e^{-\mu^{\alpha}}$, for all $\zeta >0$.\\
We also set $c_{152}=2\big(1+ N \frac{|b'|_{C^0}}{r}\big)$, $c_{153} = 1+2N\big(1+ n^2|g^{kr}|_{C^0} + |h^s|_{C^0}\big)\big(\frac{|b'|_{C^0}}{r} + \frac{|b''|_{C^0}}{r^2}+ (N-1)\frac{|b'|^2_{C^0}}{r^2}\big)$, and $\mu_j = c_{156,*}\mu_{j-1}^{\alpha}$. Then we choose
$\omega = \mu_N^\alpha/(3 c_{131,*})$ and apply Theorem \ref{iter} to each term of the sum:
$$\|A\big(\frac{D_0}{\omega}\big)b\big(\frac{y-y_j}{r}\big)u_j\|_{L^2} \le c_{155,N} \exp(- c_{132,*}\mu_N^{\alpha^2}).$$
This is possible since $\mu_N>1$ is the smallest time frequency of the set $\mu_j$, while $c_{155,N}$ is the biggest coefficient $c_{155,j}$,  j=1,..,N.
\\
Collecting the two bounds and reminding that $\mu_N \ge c_{156,*}^{1/(1-\alpha)}\mu^{\alpha^{N-1}} > 1$, we have:
\begin{eqnarray*}
\|u\|_{L^2(\Lambda)} &\le& N c_{155,N} \exp(- c_{132,*}\mu_N^{\alpha^2})
+  \frac{3 c_{131,*}}{\mu_N^\alpha}  \sum_{j=1}^N\|b\big(\frac{y-y_j}{r}\big)u_j\|_{H^1(\Omega_1)}\\
&\le& \frac{c_{158,*}}{\mu^{\alpha^{N}}}
=\frac{c_{158,*}}{(-\ln(\|Pu\|_{L^2(\Omega_1)}))^{\alpha^{N}}}
\le \frac{2^{\alpha^{N}} c_{158,*} \|u\|_{H^1(\Omega_1)}}{\Big(\ln \Big(1 + \frac{\|u\|_{H^1(\Omega_1)}}{\|Pu\|_{L^2(\Omega_1)}}\Big)\Big)^{\alpha^{N}}},
\end{eqnarray*}
where
$$c_{158,*}= N c_{155,N}+ 3N c_{131,*} c_{152} \Big(1+ \frac{|b'|_{C^0}}{r}\Big)c_{156,*}^{-\alpha/(1-\alpha)}.$$
\\
%
Now we  choose $\alpha$ such that $\alpha = (\theta)^{1/N}$ and which belongs to $(0,1)$ so that,
defining $c_{161}=\big(\ln (1+ e^{c_{159,*}(\theta)})\big)^{\theta}+2^{\theta} c_{158,*}(\theta)$, we obtain the result.
\end{proof}


\section{Applications}\label{sec3}

\begin{paragraph}{\bf Assumption A5}
Assume that  $M=\R^n$ and on $M$ we have a metric tensor $g$ satisfying
\beq\label{q 4b summary}
a_0 I\leq [g_{jk}(x)]_{j,k=1}^n\leq b_0\, I,\quad \hbox{and}\quad \|g_{jk}\|_{C^4(M)}\leq b_3.
\eeq
Note that then
\beq\label{q 4b summary 2}
& & b_0^{-1} I\leq [g^{jk}(x)]_{j,k=1}^n\leq a_0^{-1} I,\\ \nonumber
& & \|g^{jk}\|_{C^1(M)}\leq b_3 a_0^{-2},\\ \ \nonumber
& &\sqrt{a_0}d_{\R^n}(x_1,x_2)\leq d_{g}(x_1,x_2)\leq \sqrt{b_0} d_{\R^n}(x_1,x_2),\\ \ \nonumber
& &\frac 1{\sqrt{b_0}}\|\xi\| _{\R^n}\leq \|\xi\| _{g}\leq \frac 1{\sqrt{a_0}} \|\xi\| _{\R^n},\quad \xi\in T_x^*M,\\
\nonumber
& &{\sqrt{a_0}}\|\eta\| _{\R^n}\leq \|\eta\| _{g}\leq {\sqrt{b_0}} \|\eta\| _{\R^n},\quad \eta\in T_xM.
\eeq
Here we assume that $a_0<1<b_0$.
In Appendix A we call $a_1=b_0^{-1}$ and $b_1=a_0^{-1}$.
Note that (\ref{q 4b summary}) implies
that
\ba
|\hbox{Sec}|<\Lambda_M=\Lambda_M(a_0,b_0,b_3),
\ea
where Sec is the sectional curvature of $(M,g)$.
Also assume that the injectivity radius of $(M,g)$
satisfies $\hbox{inj}(M,g)>i_0$ with $0<i_0<\min(1,\pi/\Lambda_M^{1/2})$.
\\
Consider the wave operator \eqref{wave_op}.
Assume that the lower order coefficients are such that
$$\|h^j\|_{C^0(M)}+\|q\|_{C^0(M)} \le b_3.$$
We fix the three positive parameters $\ell,T,\gamma$ as follows:
$$\ell \le i_0/4, \quad T>\ell, \quad 0< \gamma \le T-\ell.$$
\end{paragraph}
\medskip

\noindent
In this section, we use the following definitions.
\begin{definition}\label{def3}
$B_g(z,r_1)\subset M=\R^n$ is the ball with center $z$ and radius $r_1$, defined using the Riemannian
metric $g$. Also, $B_{\R^n}(x,r_1)$ is the Euclidean ball in $\R^n$.
For $y=(t,x)\in \R\times M$, let
\beq\label {def of C}
\cC_g(y,r_1)=(t-r_1,t+r_1)\times B_g(x,r_1)
\eeq
and $\cC(y,r_1)=\cC_{\R^{n+1}}(y,r_1)=(t-r_1,t+r_1)\times B_{\R^n}(x,r_1)$.
Also, denote
\ba
d_{\R\times (\R^n,g)}((t_1,x_1),(t_2,x_2))=\max(|t_1-t_2|,d_g(x_1,x_2))
\ea
and
\ba
d_{\R\times (\R^n,e)}((t_1,x_1),(t_2,x_2))=\max(|t_1-t_2|,d_{\R^n}(x_1,x_2)).
\ea
%
Let $z\in M$, and define the {\it hyperbolic function} as
and
\beq\label{def of psi}
\psi(t,x;T,z)=\big(T-d_g(x,z)\big)^2-t^2,\quad y=(t,x)\in \R\times  \R^n.
\eeq
Note that as inj$(M)>i_0,$ $(t,x)\mapsto \psi(t,x;T,z)$ is smooth in $\R\times (B_g(z,i_0)\setminus \{z\})$.\\
Define
\beq\label{def of S}
S_{\ell,\gamma}&=&S(z,\ell,T,\gamma)\\
&:=& \nonumber
\{(t,x)\in [-T+\ell,T-\ell]\times   \R^n;\
\psi(t,x;T,z)\ge \gamma^2,\ d_g(x,z)\leq T\}.
\eeq
Also let $z\in M$ and
 \beq\label{double cone cut}
& & \Sigma(z,\ell,T) = \{(t,x)\in \R\times   \R^n;\ |t|\le T-\ell ,\ |t| \leq  T - d_g(x,z)\}
 \eeq
 be the {\it domain of influence} of the cylinder
 \beq\label{def of W}
 W(z,\ell,T)=(-T+\ell,T-\ell)\times B_g(z,\ell).
 \eeq
\end{definition}

\subsubsection
{\bf Some geometric estimates for domains of influences}

\begin{center}
\includegraphics[height=5.5cm]{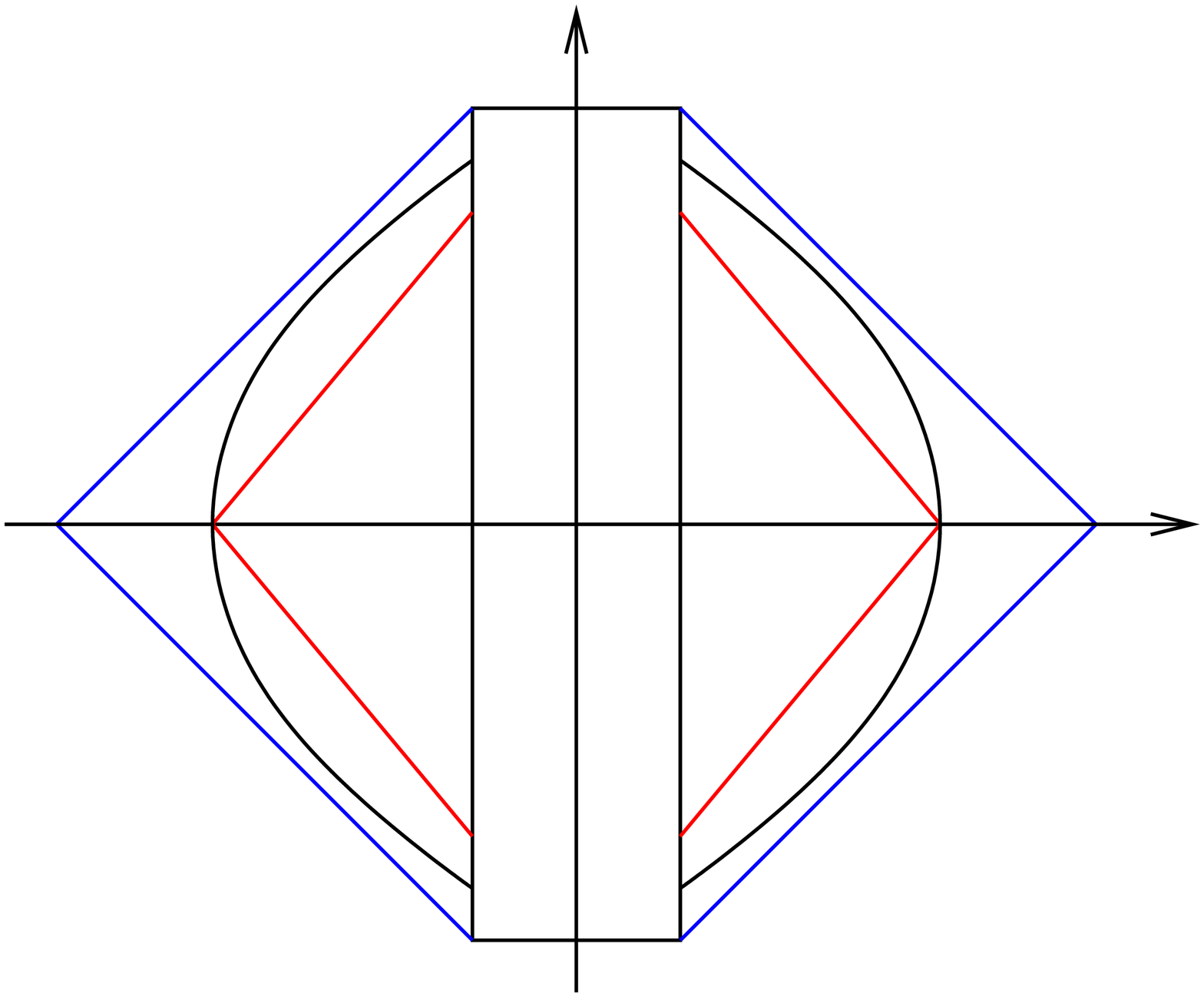}
\end{center}
{\it FIGURE 2. The hyperbolic surface between two domains of influence}

\begin{lemma}\label{lem: domains of influence}
Let $T,\ell,\gamma$ be as in Assumption A5.
Denote $T_{\ell,\gamma}=((T-\ell)^2-\gamma^2)^{\frac 12}+\ell$,
 Then
\ba
\Sigma(z,\ell,T-\gamma) \subset S(z,\ell,T,\gamma) \subset
\Sigma(z,\ell,T_{\ell,\gamma})\cup W(z,\ell,T)
\subset \Sigma(z,\ell,T).
\ea

\end{lemma}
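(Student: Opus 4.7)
The plan is to prove the three inclusions in order, relying on one central reformulation: since $d_g(x,z)\le T$ implies $T-d_g(x,z)\ge 0$, the inequality $\psi(t,x;T,z)\ge \gamma^2$ is equivalent to
\[
T-d_g(x,z)\ge \sqrt{t^2+\gamma^2}.
\]
All three inclusions reduce to algebraic manipulations of this, combined with the defining conditions for $\Sigma$, $S$, and $W$.

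For the first inclusion $\Sigma(z,\ell,T-\gamma)\subset S(z,\ell,T,\gamma)$, I would take $(t,x)\in\Sigma(z,\ell,T-\gamma)$, so $|t|\le T-\gamma-\ell\le T-\ell$ and $T-d_g(x,z)\ge |t|+\gamma$. Squaring the latter gives $(T-d_g)^2\ge (|t|+\gamma)^2\ge t^2+\gamma^2$, which is $\psi\ge \gamma^2$; the bound $d_g\le T-\gamma\le T$ is automatic. This step is essentially bookkeeping.

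The middle inclusion $S(z,\ell,T,\gamma)\subset \Sigma(z,\ell,T_{\ell,\gamma})\cup W(z,\ell,T)$ is the substantive step, and I would split on whether $d_g(x,z)\le \ell$ or $d_g(x,z)>\ell$. If $d_g\le \ell$, then either $|t|\le T_{\ell,\gamma}-\ell=\sqrt{(T-\ell)^2-\gamma^2}$ (in which case $|t|\le T_{\ell,\gamma}-d_g$ follows from $d_g\le \ell$, placing $(t,x)$ in $\Sigma(z,\ell,T_{\ell,\gamma})$), or $(t,x)$ lies in (the closure of) $W(z,\ell,T)$ since $|t|\le T-\ell$ and $d_g\le \ell$. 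If $d_g>\ell$, then $T-d_g<T-\ell$ together with $T-d_g\ge\sqrt{t^2+\gamma^2}$ yields $t^2+\gamma^2<(T-\ell)^2$, hence $|t|<\sqrt{(T-\ell)^2-\gamma^2}=T_{\ell,\gamma}-\ell$. For the remaining requirement $d_g+|t|\le T_{\ell,\gamma}=\ell+\sqrt{(T-\ell)^2-\gamma^2}$, I use $d_g\le T-\sqrt{t^2+\gamma^2}$ to reduce the claim, after a conjugate-radical manipulation, to
\[
|t|+\sqrt{t^2+\gamma^2}\le (T-\ell)+\sqrt{(T-\ell)^2-\gamma^2},
\]
which follows from monotonicity of $s\mapsto s+\sqrt{s^2+\gamma^2}$ applied at $s=|t|$, using the bound $|t|\le \sqrt{(T-\ell)^2-\gamma^2}$ just obtained (note that $\sqrt{c^2+\gamma^2}=T-\ell$ when $c=\sqrt{(T-\ell)^2-\gamma^2}$).

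The last inclusion is essentially cost-free: $T_{\ell,\gamma}\le T$ because $\sqrt{(T-\ell)^2-\gamma^2}\le T-\ell$, so $\Sigma(z,\ell,T_{\ell,\gamma})\subset \Sigma(z,\ell,T)$ directly from the defining inequalities; and $W(z,\ell,T)\subset \Sigma(z,\ell,T)$ because $|t|<T-\ell$ and $d_g<\ell$ give $|t|<T-\ell<T-d_g$. The main obstacle is the algebraic step in Case 2 of the middle inclusion, which I expect to handle by the conjugate-radical trick indicated above; a mild technical point is that the boundary $\{|t|=T-\ell\}\cap\{d_g<\ell\}$ is covered by $W$ only if one treats $W$ as essentially closed, but since this concerns a null set of points it is harmless for the intended integration estimates.
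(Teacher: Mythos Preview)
Your proposal is correct and follows essentially the same elementary route as the paper: both arguments reduce each inclusion to algebraic manipulations of the defining inequalities for $S$, $\Sigma$, and $W$. The only organizational difference is in the middle inclusion: the paper argues by contrapositive (assuming $d_g(x,z)\ge\ell$ and $|t|>T_{\ell,\gamma}-d_g(x,z)$, it expands $(T-s)^2-(T_{\ell,\gamma}-s)^2$ to obtain $\psi<\gamma^2$ directly), whereas you argue forward via the conjugate-radical identity and monotonicity of $s\mapsto s+\sqrt{s^2+\gamma^2}$. Both computations are equally short; the paper's expansion is perhaps marginally more direct. Your observation about the boundary set $\{|t|=T-\ell,\ d_g(x,z)<\ell\}$ is accurate---these points lie in $S$ but in neither the open cylinder $W$ nor in $\Sigma(z,\ell,T_{\ell,\gamma})$---and the paper's proof glosses over the same issue; as you note, it is immaterial for the subsequent $L^2$ estimates.
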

{\bf Proof.}
Assume that $x\in \R^n$ is such that
 $s=d_g(x,z)\in [\ell,T-\gamma]$ and that
$
|t|\le T-\gamma-s.
$
Then
\ba
& &\hspace{-5mm}\psi(t,x;T,z)=\big(T-s\big)^2-t^2\\
&\ge& (T-s)^2-(T-\gamma-s)^2\\
&\geq&\bigg((T-s)-(T-\gamma-s)\bigg) \bigg((T-s)+(T-\gamma-s)\bigg) \\
&\geq&\gamma(2(T-s)-\gamma)\\
 &\geq&\gamma^2.
\ea
We see that
\ba
S(z,\ell,T,\gamma)&\supset& \{(t,x)\in [-T+\ell,T-\ell]\times \R^n;\ d_g(x,z)\ge \ell,\
|T-\gamma-d_g(x,z)|\geq |t| \}.
\ea
This proves
\beq\label{first inclusion}
\Sigma(z,\ell,T-\gamma) \subset S(z,\ell,T,\gamma).
\eeq
%
Assume next that
 $s=d_g(x,z)\in [\ell,T_{\ell,\gamma}]$ and
$
|t|> T_{\ell,\gamma}-s.
$
Then $T_{\ell,\gamma}-T<0$ implies that
\ba
\psi(t,x;T,z)&=&\big(T-s\big)^2-t^2\\
&<& (T-\ell-(s-\ell))^2-((T_{\ell,\gamma}-\ell)-(s-\ell))^2\\
&\leq&( (T-\ell)^2-(T_{\ell,\gamma}-\ell)^2)
-2(T-\ell)(s-\ell)+2(T_{\ell,\gamma}-\ell)(s-\ell)\\
&\leq& \gamma^2. 
\ea
Thus, we see that the complement of $S_{\ell,\gamma}$ satisfies
\ba
 \{(t,x)\in [-T+\ell,T-\ell]\times \R^n ;\
|T_{\ell,\gamma}-d_g(x,z)|<|t| \}\subset S_{\ell,\gamma}^c\cup W(z,\ell,T).
\ea
Hence we see that
\ba
S_{\ell,\gamma}\setminus W(z,\ell,T)\subset
\Sigma(z,\ell,T_{\ell,\gamma}) \setminus W(z,\ell,T).
\ea
This  and (\ref{first inclusion}) yield the claim.

\hfill$\square$ \medskip

\subsection{Applications:
Stability on the domain of influence of a cylinder}

Here we consider the case when the solution of the wave equation \eqref{wave_op} vanishes in the cylinder $W(z_0,\ell,T)$
and $T$ may be so large that we have to consider also singular points for $d_g$.
We refer to Definition \ref{def3} for the definition of sets used.
\\
Our aim is to prove the following:

\begin{theorem}\label{global2 application} Under the conditions of Assumption A5,
let $z_0\in \R^n$, and define
\ba
\Omega=(-T,T)\times M, \;\Upsilon = W(z_0,T,\ell),\; \Lambda=S(z_0,\ell,T,\gamma)\setminus \Upsilon,\\
\Omega_0= S(z_0,\ell,T,\frac{\gamma}{\sqrt{2}}) \setminus \{(t,x): t\in \R,  d_g(z_0,x) \le \frac{\ell}{4}  \},
\; \Omega_1= \Omega_0 \setminus \Upsilon.
\ea
Assume that $u \in H^1(\Omega)$ satisfies
\ba
P(x,D) u(y)=f(y),\quad \hbox{for}\; y\in \Omega
\ea
and
\beq\label{eq: original support}
& &u|_{W(z_0,\ell,T)}=0.
\eeq
Then for every $0<\theta < 1$ we have
\begin{eqnarray*}
 \|u\|_{L^{2}(\Lambda)} \le c_{163} \frac{\|u\|_{H^{1}(\Omega_1)}}{\Big(\ln\Big(e+\frac{\|u\|_{H^{1}(\Omega_1 )}}{\|f\|_{L^{2}(\Omega_1 )}}\Big)\Big)^{\theta}}.
\end{eqnarray*}
Here, $c_{163}$ depends only on $a_0,b_0,b_3,T,
\gamma,\ell$, $i_0,$  and $\theta$.
\end{theorem}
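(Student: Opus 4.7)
The plan is to apply Theorem~\ref{global2} with a finite sequence of hyperbolic weights
$\psi_k(t,x)=(T^{(k)}-d_g(x,z^{(k)}))^2-t^2$
attached to carefully chosen centers $z^{(k)}\in M$, starting from $z^{(1)}=z_0$, $T^{(1)}=T$. First I would verify that each such $\psi$ is a conormally strongly pseudoconvex weight for $P$ on the region where $d_g(\cdot,z)$ is smooth. Using $|\nabla d_g|_g=1$ on that locus, a direct computation gives
\[ p(y,\psi'(y))=-(\partial_t\psi)^2+g^{jk}\partial_j\psi\,\partial_k\psi=-4t^2+4(T-d_g(x,z))^2=4\,\psi(y;T,z), \]
so the non-characteristic condition $p(y,\psi'(y))>0$ holds on the set $\{\psi>0\}$ containing $\Lambda$, and $\psi'(y)\neq 0$ whenever $(t,\,T-d_g(x,z))\neq(0,0)$. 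Each $\psi_k$ is $C^{2,\rho}$-smooth on $\R\times\bigl(B_g(z^{(k)},i_0)\setminus B_g(z^{(k)},\varepsilon)\bigr)$ for any $\varepsilon>0$, with bounds depending only on the geometric data $a_0,b_0,b_3,i_0$ via the uniform estimates on $d_g$ announced in Appendix~A.

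Since $T$ may exceed $i_0$, no single $\psi$ covers $\Lambda$: one must walk along geodesics emanating from $z_0$ and switch center. I would construct the $z^{(k)}$ inductively, with $d_g(z^{(k)},z^{(k-1)})\le i_0/8$ and $z^{(k)}$ chosen on the ``front'' between $\Lambda$ and $\bigcup_{j<k}\Lambda_j\cup\Upsilon$, and pick $T^{(k)}$ so that the target level set $\{\psi_k=\psi_{\min,k}\}$ stays just inside $\{\psi(\cdot;T,z_0)=\gamma^2\}$. For each $k$ set $\Omega_{0,k}$ to be a neighborhood of $\Lambda_k$ of Euclidean radius less than $i_0/2$ around $z^{(k)}$ that avoids $z^{(k)}$ itself, so that $\psi_k\in C^{2,\rho}(\overline{\Omega}_{0,k})$ and $\dist(\partial\Omega_{0,k},\Lambda_k)>0$. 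Choose $\psi_{\max,k}$ just above the value of $\psi_k$ on $\bigcup_{j<k}\Lambda_j\cup\Upsilon$, so that $\{\psi_k>\psi_{\max,k}\}\cap\Omega_{0,k}\subset\Upsilon_k:=\Omega_{0,k}\cap\bigl(\bigcup_{j<k}\Lambda_j\cup\Upsilon\bigr)$, and $\psi_{\min,k}$ just below, making $\Lambda_k$ a nonempty thin shell. This verifies Assumption~A3; Assumption~A2 is automatic from the choice of Gevrey localizers. By Lemma~\ref{lem: domains of influence} and the geometry of the hyperbolic surfaces, $K=K(T,\ell,\gamma,i_0,a_0,b_0)$ steps suffice to cover $S(z_0,\ell,T,\gamma)\setminus\Upsilon$, and the chain can be arranged so that $\Lambda=\bigcup_k\Lambda_k$ is connected. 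Theorem~\ref{global2} then produces
\[ \|u\|_{L^2(\Lambda)}\le c_{161}\,\frac{\|u\|_{H^1(\Omega_1)}}{\bigl(\ln(1+\|u\|_{H^1(\Omega_1)}/\|f\|_{L^2(\Omega_1)})\bigr)^{\theta}}, \]
and $\ln(1+s)\ge\tfrac12\ln(e+s)$ for $s$ in the relevant range gives the claim with $c_{163}:=2^{\theta}c_{161}$.

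The main obstacle is uniformity of $c_{161}$ across the $K$ pieces. Since $c_{161}$ is built from the $C^{2,\rho}$ norms of the $\psi_k$, the non-characteristic constants $p_{1,k}=\min_{\overline{\Omega}_{0,k}}p(y,\psi_k'(y))$, the lower bounds $C_{l,k}=\min|\psi_k'|$, the distances $\dist(\partial\Omega_{0,k},\Lambda_k)$, and the radii $r,R$ produced by Proposition~\ref{prop}, these quantities must admit bounds depending only on $a_0,b_0,b_3,T,\gamma,\ell,i_0$, and $\theta$. The identity $p(y,\psi_k'(y))=4\psi_k\ge 2\gamma^2$ on $\Omega_{0,k}$ and the lower bound $|\psi_k'|^2\ge c(\gamma,\ell)>0$ on $\Lambda_k$ (since either $|t|$ or $T^{(k)}-d_g$ is bounded below there) supply the required geometric positivity, while the curvature bound $|\hbox{Sec}|\le\Lambda_M(a_0,b_0,b_3)$ together with $\hbox{inj}(M,g)>i_0$ yields uniform $C^{2,\rho}$ control of $d_g(\cdot,z^{(k)})$ away from $z^{(k)}$ itself. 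These uniform estimates are the content of Appendix~A, and with them in hand one reads $c_{163}$ from the proof of Theorem~\ref{global2} with all dependences explicit in the parameters of the statement.
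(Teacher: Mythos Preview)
Your general plan---cover $\Lambda$ by finitely many hyperbolic weights $\psi_k=(T^{(k)}-d_g(\cdot,z^{(k)}))^2-t^2$ and feed them into Theorem~\ref{global2}---is the right idea, and the paper even mentions a variant of it in the remark following the proof. But your verification of Assumption~A3, item~2, has a real gap. You write ``choose $\psi_{\max,k}$ just above the value of $\psi_k$ on $\bigcup_{j<k}\Lambda_j\cup\Upsilon$'', but what is needed is that the \emph{not-yet-covered} part of $\supp(u)$ in $\Omega_{0,k}$ lies in the sublevel set $\{\psi_k\le\psi_{\max,k}\}$ while the superlevel set $\{\psi_k>\psi_{\max,k}\}\cap\Omega_{0,k}$ sits inside the already-covered region. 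There is no a~priori reason these two conditions are compatible for a center $z^{(k)}$ placed ``on the front'': the level sets of $\psi_k$ are hyperboloids about $z^{(k)}$ and need not respect the geometry of the level sets of the original $\psi(\cdot;T,z_0)$, so $\supp(u)$ may protrude above $\psi_{\max,k}$ on the wrong side.

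The paper closes this gap with a different construction and a geometric lemma you are missing. Rather than fixing the centers $z^{(k)}$ in advance, the paper first selects the points $y_j=(t_j,x_j)$ where local unique continuation is applied, as maximizers of the \emph{original} (possibly nonsmooth) function $\psi(\cdot;T,z_0)$ over $\overline\Lambda$ minus the already-covered cylinders. Only then, for each $y_j$ with $d_g(z_0,x_j)>\tfrac{7}{8}i_0$, does it pick an auxiliary center $z_j$ on a minimizing geodesic from $z_0$ to $x_j$, at distance $i_0/4$ from $x_j$, and set $T_j=T-d_g(z_0,z_j)$. The key step (Lemma~\ref{lem support condition}) is the triangle inequality $d_g(x,z_j)+d_g(z_j,z_0)\ge d_g(x,z_0)$, which gives
\[
\psi(t,x;T_j,z_j)=\bigl(T-d_g(z_0,z_j)-d_g(x,z_j)\bigr)^2-t^2\le\bigl(T-d_g(x,z_0)\bigr)^2-t^2=\psi(t,x;T,z_0)
\]
on $\cC(y_j,2R)$, with equality at $y_j$. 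Thus the support condition $\supp(u_j)\subset\{\psi(\cdot;T,z_0)\le\psi(y_j;T,z_0)\}$, which holds by the maximizer choice of $y_j$, transfers automatically to $\{\psi(\cdot;T_j,z_j)\le\psi(y_j;T_j,z_j)\}$. This is the missing mechanism; without it (or an equivalent argument comparing the sublevel sets of your $\psi_k$ to those of $\psi(\cdot;T,z_0)$), your inductive step does not close.
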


\begin{corollary}\label{cor2}
By Lemma \ref{lem: domains of influence} we observe that, after a reparametrization of the time,
$\Sigma(z_0,\ell,T) \subset S(z_0,\ell,T+\gamma,\gamma)$. Consider the wave equation formulated in Theorem \ref{global2 application}. Hence for each $\gamma$ such that $0<\gamma < T- \ell$, the optimal time of the control $T-\ell$ (with $T-\ell =\max_{x,y \in \Sigma(z_0,\ell,T)\backslash W(z_0,\ell,T)} d_g(y,x)$) can be approximated from above by $T-\ell+\gamma$, using a result of stability of the unique continuation.
\end{corollary}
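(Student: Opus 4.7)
The plan is to combine Lemma \ref{lem: domains of influence} with Theorem \ref{global2 application} applied at an \emph{enlarged} time horizon. Specifically, using observation on the cylinder $W(z_0,\ell,T+\gamma)$ (whose time half-length is $T-\ell+\gamma$), I will deduce a stability estimate on the full domain of influence $\Sigma(z_0,\ell,T)$. Letting $\gamma\downarrow 0$ then yields the approximation of the optimal time $T-\ell$ from above.

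First I record the geometric inclusion. Applying the first inclusion of Lemma \ref{lem: domains of influence} with the time parameter replaced by $T+\gamma$ (and $\ell,\gamma$ unchanged), I obtain $\Sigma(z_0,\ell,(T+\gamma)-\gamma)\subset S(z_0,\ell,T+\gamma,\gamma)$, that is
\[
\Sigma(z_0,\ell,T)\subset S(z_0,\ell,T+\gamma,\gamma).
\]
The constraint $0<\gamma<(T+\gamma)-\ell$ required by Assumption A5 for the enlarged time horizon is automatic from the given $0<\gamma<T-\ell$.

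Next I apply Theorem \ref{global2 application} with time horizon $T^\star:=T+\gamma$ in place of $T$ (keeping $z_0$, $\ell$, $\gamma$). Under the observation hypothesis $u|_{W(z_0,\ell,T+\gamma)}=0$, the theorem yields, for every $\theta\in(0,1)$,
\[
\|u\|_{L^2(\Lambda^\star)} \le c_{163}\,\frac{\|u\|_{H^1(\Omega_1^\star)}}{\left(\ln\!\left(e+\tfrac{\|u\|_{H^1(\Omega_1^\star)}}{\|f\|_{L^2(\Omega_1^\star)}}\right)\right)^\theta},
\]
where $\Lambda^\star=S(z_0,\ell,T+\gamma,\gamma)\setminus W(z_0,\ell,T+\gamma)$. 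Combined with the inclusion of the previous step, the estimate controls $u$ on $\Sigma(z_0,\ell,T)\setminus W(z_0,\ell,T+\gamma)$, and hence a fortiori on the target set $\Sigma(z_0,\ell,T)\setminus W(z_0,\ell,T)$ appearing in the definition of the optimal control time.

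Finally I identify the observation time and pass to the limit. The cylinder $W(z_0,\ell,T+\gamma)=(-T-\gamma+\ell,\,T+\gamma-\ell)\times B_g(z_0,\ell)$ on which data must be known has time half-length $(T+\gamma)-\ell=(T-\ell)+\gamma$. Since $\gamma\in(0,T-\ell)$ is arbitrary, letting $\gamma\downarrow 0$ shows that the observation time tends from above to the geometric quantity $T-\ell=\max_{x,y\in \Sigma(z_0,\ell,T)\setminus W(z_0,\ell,T)} d_g(y,x)$, which is the desired approximation. The content of the corollary is thus purely geometric bookkeeping on top of the log-stability result: no new Carleman or iteration argument is required, and the only mild obstacle is checking that the enlargement $T\mapsto T+\gamma$ is compatible with Assumption A5 and that the support assumption of Theorem \ref{global2 application} must be read on the slightly longer cylinder $W(z_0,\ell,T+\gamma)$.
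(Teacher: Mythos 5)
Your argument matches what the paper intends: the corollary is stated as a direct consequence of Lemma \ref{lem: domains of influence} (applied with $T\mapsto T+\gamma$) and Theorem \ref{global2 application} at the enlarged time horizon, and your unpacking is the natural one, including the check that Assumption A5 remains valid after the shift.

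One small imprecision worth flagging: the clause ``a fortiori on the target set $\Sigma(z_0,\ell,T)\setminus W(z_0,\ell,T)$'' is stated backwards. Since $W(z_0,\ell,T)\subset W(z_0,\ell,T+\gamma)$, removing the \emph{larger} cylinder yields the \emph{smaller} difference set, so the set inclusion you would need for a genuine a fortiori step points the wrong way. The conclusion is still correct, but for a different reason: on $\Sigma(z_0,\ell,T)$ the time variable already satisfies $|t|\le T-\ell<(T+\gamma)-\ell$, so $\Sigma(z_0,\ell,T)\cap W(z_0,\ell,T+\gamma)$ and $\Sigma(z_0,\ell,T)\cap W(z_0,\ell,T)$ coincide up to the measure-zero slice $\{|t|=T-\ell\}$. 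Consequently $\Sigma(z_0,\ell,T)\setminus W(z_0,\ell,T+\gamma)$ and $\Sigma(z_0,\ell,T)\setminus W(z_0,\ell,T)$ agree as $L^2$ domains, and the estimate of Theorem \ref{global2 application} at time $T+\gamma$ does control $u$ on the target set. With that wording fixed, the proof is complete and is the same argument the paper has in mind.
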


\subsubsection{Local stability estimate}

Below, we say that the cut-off function corresponding to a center point $\hat y=(\hat t,\hat x)\in \R\times \R^n$ and a radius $\hat r$
is the product of a ``time-variable cut-off function'' and ``space-variable cut-off function'', given by
\beq\label{def of b 1}
& &b_{\hat y,\hat r}(t,x)=b^{(ti)}_{\hat t,\hat r}(t)b^{(sp)}_{\hat x,\hat r}(x),\\ \nonumber
& &b^{(ti)}_{\hat t,\hat r}(t)=f^{(ti)}(\frac {t-\hat t}{\hat r}),\quad f^{(sp)}_{\hat x,\hat r}(x)=f^{(1)}(\frac {x-\hat x}{\hat r}),
\eeq
where
$ f^{(sp)} \in C^2_0(\R^{n})$ and $f^{(ti)}\in G^{1/\alpha}_0(\R^1)$.
We assume that $0\leq f^{(sp)}\leq 1$ and $0\leq f^{(ti)}\leq 1$.
We assume that
\ba
& &\supp(f^{(ti)})\subset B_{\R}(0,\sqrt 2),\quad f^{(ti)}|_{B_{\R}(0,1)}=1,\\
& &\supp(f^{(sp)})\subset B_{\R^n}(0,\sqrt 2),\quad f^{(sp)}|_{B_{\R^n}(0,1)}=1.
\ea
Then we have for $\hat y\in \R^{n+1}$ and $\hat r>0$
\beq\label{def of b 2}
\supp(b_{\hat y,\hat r})\subset  \cC(\hat y,2\hat r), \quad b_{\hat y,\hat r}|_{\cC(\hat y,\hat r)}=1.
 \eeq
Note that by (\ref{q 4b summary}) and (\ref{def of C}),
 $\cC(\hat y,2\hat r)
\subset
\cC_g(\hat y,2\sqrt{b_0}\hat r)$ and  $\cC_g(\hat y,\sqrt {a_0}\hat r)\subset \cC(\hat y,\hat r)$.
%
\\\\
For the proof of the global stability we must define the following points $\hat y$ and functions $\psi_{\hat z, \hat T}$.
\begin{definition}\label{def-yhat} (see Figure 2 below, where $\hat y = y_j$.)
Let  $\quad \hat y \; = \; (\hat x, \hat t) \; \in \;
S(z_0,\ell,T,\gamma) \backslash \{y; \; t\in \R;  d_g(z_0,\hat x) < \ell \}$:
\\
a) If $\ell \le  d_g(z_0,\hat x) \le \frac{7}{8}i_0$, then define $\hat z = z_0$, $\hat T = T$, $\psi_{\hat z, \hat T}(y) = \psi(y;z_0, T)$,\\
b) If $\frac{7}{8}i_0 < d_g(z_0,\hat x) $, then let  $\psi_{\hat z, \hat T}(y) = \psi(y;\hat z,\hat T)$. Calling $\gamma_{z_0,\hat \xi}$  a distance minimizing, unit speed geodesic from $z_0$ to $\hat x$ in $M$, we define $\hat z,\hat T$ as follows:
\ba
\hat L := d_g(z_0,\hat x) - \frac{i_0}{4} > \frac{5i_0}{8},\qquad \hat T := T- \hat L, \qquad
\hat z := \gamma_{z_0,\hat \xi}(\hat L),
\ea
\end{definition}
Note that the choice of the point $\hat z$ is not unique as there may be several distance minimizing geodesics from $z_0$
to $\hat x$.

\noindent Observe that for $\ell < i_0/4$ and $T > \frac{7i_0}{8}$, we have
\beq \label{property x5}
\gamma + \ell < \gamma + \frac{i_0}{4} \le \hat T=T- d_g(z_0,\hat x) + \frac{i_0}{4} < T - \frac{5i_0}{8} < T - \ell - \frac{3i_0}{8}.
\eeq
We then introduce the sets:
\beq\label{notations 1}
& &
\Omega_2(\hat z,\hat T,\ell,\gamma)=
S(\hat z, \hat T,\ell,\gamma)\cap \{(t,x): t\in \R,  \ell \le d_g(\hat z,x) \le \frac{5}{8} i_0\},
\\ \nonumber
& & \Omega_3(\hat z,\hat T,\ell,\gamma)=S(\hat z,\hat T,\ell,\frac {\gamma}{ \sqrt 2}) \cap \{(t,x): t\in \R,  \frac{1}{4} \ell \le d_g(\hat z,x) \le \frac{7}{8} i_0\}.
\eeq

\begin{lemma}\label{lm_expo1 application} Under the Assumptions A2, A5, let $\hat y$ and $\psi_{\hat z, \hat T}$ be as in 
Definition \ref{def-yhat}.
Then there exist
$r,R$, $c_U,\,c_P,\,c_A$, and
$c_{150}, c_{131}, c_{132}$,
dependent only on the parameters $a_0,b_0,b_3,T,
\gamma,\ell$, and $i_0$ such that the following property holds:
\\
If $v\in H^1_0(\Omega)$ and  $\mu \ge 1$ is such that
\beq\label{support condition}
\supp(v) \cap \cC(\hat y,2R) \subset \{y\in  \cC(\hat y,2R);\ \psi_{\hat z, \hat T}(y)\leq \psi_{\hat z, \hat T}(\hat y)\}
\eeq
and
\begin{eqnarray*}
\|v\|_{H^1(\cC(\hat y,2R))}\le c_U,\quad \|Pv\|_{L^2(\cC(\hat y,2R))} < c_P, \quad \|A(D_0/\mu)(b_{\hat y,R} Pv)\|
_{L^2(\R^{n+1})} \le c_A e^{-\mu^\alpha},
\end{eqnarray*}
then,
\begin{eqnarray*}
\|A(D_0/\omega)(b_{\hat y,r}v)\|_{H^1(\R^{n+1})} \le c_{150} \exp({-c_{132}\mu^{\alpha^2}}),\quad \hbox{for all}\ \omega \le \mu^\alpha/(3c_{131}).
\end{eqnarray*}
\end{lemma}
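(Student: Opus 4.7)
The plan is to reduce the statement to a direct application of Lemma \ref{lm_expo1} (in the cylindrical-localizer version of Remark \ref{remarks}.4) by showing that, on a suitable uniformly sized neighborhood of $\hat y$, the function $\psi_{\hat z,\hat T}$ together with the geometry satisfies Assumption A1 with constants depending only on $a_0,b_0,b_3,T,\gamma,\ell,i_0$. The two cases in Definition \ref{def-yhat} are arranged precisely so that $\hat x$ stays uniformly away from the singular locus of $d_g(\cdot,\hat z)$ (i.e.\ from $\hat z$ itself and from the cut locus).

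First I would verify the non-characteristic condition and the non-vanishing of $\psi'_{\hat z,\hat T}$ uniformly on $\Omega_3(\hat z,\hat T,\ell,\gamma)$ by direct computation. Since $\|\nabla_g d_g(\cdot,\hat z)\|_g=1$ away from $\hat z$ and inside the injectivity ball, a short calculation gives
\[
p(y,\psi'_{\hat z,\hat T}(y)) \;=\; 4\big((\hat T-d_g(x,\hat z))^2-t^2\big)\;=\;4\,\psi_{\hat z,\hat T}(y)\;\ge\;2\gamma^2,
\]
valid on $\Omega_3$ by its definition through $S(\hat z,\hat T,\ell,\gamma/\sqrt2)$. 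The same lower bound $\psi_{\hat z,\hat T}\ge\gamma^2/2$ forces $\hat T-d_g(x,\hat z)\ge\gamma/\sqrt2$, so the spatial part of $\psi'_{\hat z,\hat T}$ is bounded below in norm by a positive constant depending only on the parameters; hence $|\psi'_{\hat z,\hat T}(y)|$ is uniformly bounded below on $\Omega_3$.

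Next I would establish uniform $C^{2,\rho}$ bounds on $\psi_{\hat z,\hat T}$ on $\Omega_3$. In both cases (a) and (b) of Definition \ref{def-yhat} one has $d_g(\hat z,x)\in[\ell/4,7i_0/8]\subset(0,i_0)$ on $\Omega_3$, so $x$ lies strictly inside the injectivity ball of $\hat z$ and stays bounded away from $\hat z$. Standard Jacobi-field estimates under the curvature bound $|\mathrm{Sec}|\le\Lambda_M(a_0,b_0,b_3)$ then yield a $C^{2,\rho}$ bound on $x\mapsto d_g(x,\hat z)$ depending only on $a_0,b_0,b_3,\ell,i_0$; this is exactly what the geometric estimates collected in Appendix A are intended to provide. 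From the explicit formula $\psi_{\hat z,\hat T}(t,x)=(\hat T-d_g(x,\hat z))^2-t^2$ and the chain rule one then gets uniform control of $|\psi'|_{C^0}$, $|\psi''|_{C^0}$, and $|\lambda\psi|_{\max,\Omega_3}$.

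With the previous two steps in hand, Proposition \ref{prop} applies uniformly in $\hat y$ and produces radii $r,R>0$ and constants $c_{131},c_{132},c_{150}$ depending only on the listed parameters. Since $\hat y\in\Omega_2$ satisfies $d_g(\hat z,\hat x)\in[\ell,5i_0/8]$, one checks that $\mathrm{dist}(\hat y,\partial\Omega_3)$ is bounded below by a positive constant depending only on the parameters, so after possibly shrinking $R$ one may assume $\cC(\hat y,2R)\subset\Omega_3$. Under the support hypothesis \eqref{support condition} and the three quantitative bounds on $\|v\|_{H^1}$, $\|Pv\|_{L^2}$, and $\|A(D_0/\mu)(b_{\hat y,R}Pv)\|_{L^2}$, Lemma \ref{lm_expo1} then applies at $y_0=\hat y$ with $\psi$ replaced by the normalized level function $\psi_{\hat z,\hat T}-\psi_{\hat z,\hat T}(\hat y)$, giving the claimed estimate. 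The main obstacle is the uniform $C^{2,\rho}$ control on $d_g(\cdot,\hat z)$ across the whole admissible range of $\hat z$, which is precisely why the curvature bound and the careful choice $d_g(\hat z,\hat x)=i_0/4$ in Case (b) of Definition \ref{def-yhat} are essential.
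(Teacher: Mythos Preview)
Your proposal is correct and follows essentially the same approach as the paper's proof: compute $p(y,d\psi_{\hat z,\hat T})=4\psi_{\hat z,\hat T}\ge 2\gamma^2$ on $\Omega_3$, use the uniform $C^3$ bounds on $d_g(\cdot,\hat z)$ over the annulus $\{\ell/4\le d_g(\hat z,\cdot)\le 7i_0/8\}$ (these are exactly \eqref{property 0}, \eqref{property 2b}, and \eqref{eq: estimat} in the Appendix) to feed Table~\ref{tab1}, obtain uniform $r,R$, and apply Lemma~\ref{lm_expo1} with $\psi=\psi_{\hat z,\hat T}-\psi_{\hat z,\hat T}(\hat y)$. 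One small correction: in case~(a) of Definition~\ref{def-yhat} you do not necessarily have $\hat y\in\Omega_2$ (since $d_g(z_0,\hat x)$ can be as large as $7i_0/8$), but what is actually needed---and what the paper secures by its explicit choice of $R_0$---is only that $\cC(\hat y,2R_0)$ stays inside the injectivity ball of $\hat z$ and inside the region where $\psi_{\hat z,\hat T}\ge\gamma^2/2$.
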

\begin{proof}
In the Appendix we have calculated uniform estimates for the function $\psi(y;z_0,T)$ defined in
Definition \ref{def-yhat} a);
see \eqref{property 0},  \eqref{property 1} (with $\gamma_I=\gamma/\sqrt{2}$), \eqref{property 2}, \eqref{property x1}, \eqref{property x2}.
Analogously, one can estimate the functions $\psi(y;\hat z, \hat T)$ defined in
Definition \ref{def-yhat} b):
calling $\Omega_2= \Omega_2(\hat z,\hat T,\ell,\gamma) $ and $\Omega_3= \Omega_3(\hat z,\hat T,\ell,\gamma)$, we have
\beq \label{eq: estimat}
& &\|  \psi_{\hat z,\hat T}'\|_{C^0(\Omega_3)}\leq b_4(T+1) ,\nonumber
\\
& &\|  \psi_{\hat z,\hat T}''\|_{C^0(\Omega_3)}\leq b_4 (T+1)((\ell/4)^{-1}+1) ,\nonumber
\\ \nonumber
&& \|  \psi_{\hat z,\hat T}'''\|_{C^0(\Omega_3)}\leq b_4(T+1)((\ell/4)^{-2} +(\ell/4)^{-1}),
\\ \nonumber
& &\min_{y\in \Omega_3} |d\psi_{\hat z,\hat T}(y)| \geq {\sqrt 2} \gamma b_0^{-1/2},\quad
\min_{y\in \Omega_3}|p(y,d\psi_{\hat z,\hat T}(y))|\geq 2\gamma^2,\,
\\
& &
d_{\R\times (\R^n,e)}(\Omega_2,\p \Omega_3)\geq
\frac{1}{\sqrt{b_0}} \min\{\frac{i_0}{4}, \frac{\gamma^2}{8(T-\ell)}, \frac{3\ell}{4}\}.
\eeq
Next define $R_0 = (2\sqrt{b_0})^{-1}\min\{i_0/4,\, 3\ell/4,\, \gamma^2/(8(T-\ell))\}$ a uniform radius that let the ball $B_{2R_0}(\hat y)$  stay inside the injectivity radius (in order to assure the regularity of $\psi_{\hat z, \hat T}$) and inside the set $ \Omega_3$ (in order to assure that $\psi_{\hat z, \hat T}$ is non-characteristic in the ball), according to Lemma \ref{lm_boundary}. Moreover, $\cC(\hat y,2R_0) \subset \Omega_3$.
We then consider the procedure of Appendix A to determine the radii $r,R$ related to the function $\psi = \psi_{\hat z,\hat T}(y)-\psi_{\hat z,\hat T}(\hat y)$.
We  set $R_1 = \min\{1, R_0, (\lambda|d\psi|_{C^0(\Omega_3)})^{-1}\}$ in the Table \eqref{tab1},  and we observe that, using the estimates \eqref{eq: estimat} for the derivatives of $\psi$ and Assumption A5, we can choose radii $r,R, R_2$ that are the same for each $\hat y$, and consequently also the derived parameters.
\\
As seen in section \ref{sec2}, all the parameters in Lemma \ref{lm_expo1},
$r,R$, $c_U,\,c_P,\,c_A$,
$c_{150}, c_{131}, c_{132}$, depend on the uniform estimates for the quantities listed in Proposition \ref{prop}.
As we saw above, these estimates depend on the parameters
$a_0,$ $b_0,$ $b_3,$ $T$, $\gamma$, $\ell$, and $i_0$.
Then, for each $\hat y$, the claim follows from Lemma \ref{lm_expo1} for $v$ in place of $u$, with the function $\psi = \psi_{\hat z,\hat T}(y)-
\psi_{\hat z,\hat T}(\hat y)$.
\end{proof}

\subsubsection{
Global stability estimate}

{\bf Rule of choosing the center points of small balls:} \\
We are going to apply the local stability estimate for the solution $u$ of the wave equation.
Let $r,R$ be the radii defined in
Lemma \ref{lm_expo1 application} and consider the cylinders $\cC(y_j,r/2)$  having center points at $y_j$
chosen iteratively below, see \eqref{def of C}.
For each point $y_j$ we define a localizer $b_j(y)$ associated with $y_j=(t_j,x_j)$ (see \eqref{def of b 2})
by
\ba
& &b_j(t,x)=b_{y_j,r/2}(t,x).
\ea
We proceed in analogy with Assumption A4, with $\Lambda$ in place of $\Omega_a$, with $\Lambda$ and $\Omega_0$ defined in the statement of Theorem \ref{global2 application}.
Next set $\psi(y)=\psi(y;T,z_0)$, as in \eqref{def of psi}.
The main difference is that here  $\psi$ is not everywhere a $C^{2,\rho}$ function, as explained below.
\\
We define the set ${\cal E}=\{y_j \in \overline{\Lambda}, j=1,...,J_0\}$ and the functions $u_j(y)$, iteratively as follows:
\\
1) For $j=1$ we define $u_1(t,x)=u(t,x)$ and consider $\overline{\Lambda} \subset \Omega_0$
\beq\label{yj choise j is 1}
 y_1\in \hbox{argmax}\, \{\psi(y;T,z_0)\, ; \, y=(t,x)\in \overline{\Lambda} \}.
\eeq
2)  For  $j\geq 2$, after $y_1,y_2,\dots,y_{j-1}\in {\cal E}$ have been chosen and the function $u_{j}(t,x)$ has been
 constructed we proceed as follows: If $\supp(u_{j})\cap \overline{\Lambda}\not=\emptyset$, we choose $y_j$ to be a point that satisfies
 \beq\label{yj choise}
 y_j\in \hbox{argmax}\, \{\psi(y;T,z_0)\, ;\, y=(t,x)\in \overline{\Lambda}\backslash \cup_{k=1}^{j-1} \cC(y_k,r/2)\}
 \eeq
and define
 \beq\label{u j iteration}
u_{j}(y)=(1-b_{j-1}(y))u_{j-1}(y).
\eeq
We notice that by construction $\supp(u_j) \cap \cC(y_j,2R) \subset \{y; \psi(y;T,z_0)\le \psi(y_j;T,z_0)\}$.\\
When  $\supp(u_{j})\cap \overline{\Lambda}=\emptyset$, we end the iteration and we set $J_0$ equal to $j$.
\\
Next we estimate the number of iteration steps $J_0$.
By construction, the points $y_j$ in steps 1 and 2
satisfy
$
d_{\R\times (\R^n,e)}(y_j,y_k)\geq r/2,\quad j\not =k.
$
Moreover,
\ba
\overline{\Lambda}\subset \cC(y_I,\rho_0),\quad\hbox{where } \rho_0=\frac 2{\sqrt{a_1}}(T+1),\quad y_I=(0,z_0),
\ea
see (\ref{def of C}).
Thus the maximal number $J_0$ of steps is smaller or equal to the maximal
number of points in a $\tilde r$ net in the set $\overline{\Lambda}$ that is bounded by
\beq\label{J0 estimate}
J_0\leq \frac {\hbox{vol}_{\R\times \R^n}(\cC(0,\rho_0+r))}{  \hbox{vol}_{\R\times \R^n}(\cC(0,\frac{r}{4}))}\leq C_1\frac {(T+2)^{n+1}
}{r^{n+1}}
\eeq
where $C_1$ is a uniform constant that can be estimated  in an explicit way.
\medskip
\\
Note that above we have always chosen $y_j=(t_j,x_j)$ as maximal points for the hyperbolic function $\psi(y;T,z_0)$
 associated with  the ``original'' center
 point $z_0$ and time $T$.
The motivation for this choice is that the level sets of $\psi(y;T,z_0)$ are the best approximation of the domain of influence $\Sigma(z_0,\ell,T)$ that we want to approach.
\\
When the distance of $x_j$ to the point $z_0$ is larger
 than the injectivity radius,  the function $y\mapsto \psi(y;T,z_0)$ is only Lipschitz-smooth but it may happen that
it is  not $C^2$-smooth. To apply Lemma \ref{lm_expo1 application} in this case, we choose a different
hyperbolic function $\psi_{z_j,T_j}$ that changes
at each step of the iteration and depends on the point $y_j$.

\begin{center}
\psfrag{1}{$z_0$}
\psfrag{2}{$x_j$}
\psfrag{3}{$z_j$}
\psfrag{4}{$\p B_g(z_0,d_j)$}
\psfrag{5}{\hspace{-4mm}$\p B_g(z_j,s_0)$}
\includegraphics[height=5.5cm]{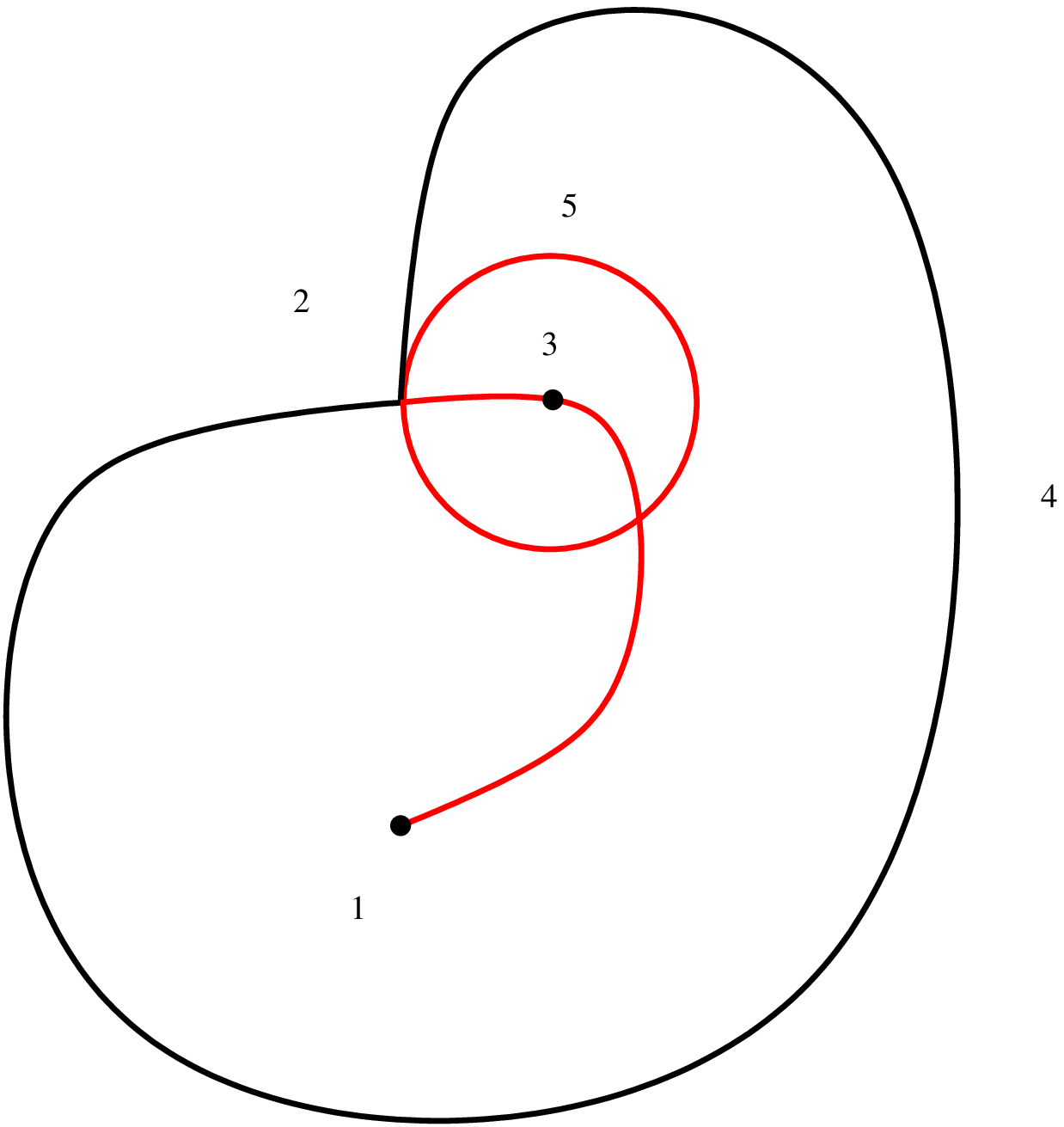}
\end{center}
{\it FIGURE 3.}  {\it Preparations to do unique continuation near $y_j=(t_j,x_j)\in \R\times \R^n$. When $d_j=d_g(z_0,x_j)$ is larger than the injectivity radius,
the boundary of the ball $B_g(z_0,d_j)$ may be non-smooth (the black external contour in the figure). For $x_j\in B_g(z_0,d_j)$
we choose some distance minimizing geodesic $\gamma_{z_0,\xi_j}([0,d_j])$ that
connects $z_0$ to $x_j$. In the figure, this geodesic is the red curve from $z_0$ to $x_j$.
On this geodesic we choose a point
$z_j=\gamma_{z_0,\xi_j}(d_j-s_0)$. The boundary of the ball $B_g(z_j,s_0)$ (the red circle
in the figure) is smooth and contains the point $x_j$. We do unique continuation near the point $y_j$
using the hyperbolic function $\psi_{z_j,T_j}$, associated with the center $z_j$, that is smooth near $y_j$.}
\medskip


We then distinguish two cases as in
Definition \ref{def-yhat}:
\\
a) If $\ell\leq d_g(z_0,x_j)\leq \frac{7i_0}{8}$, then we consider $\psi(y;T_j,z_j)$ with $z_j=z_0$ and $T_j=T$.
\\
b) Next, assume
that $d_j:=d_g(z_0,x_j)>\frac{7i_0}{8}$. Then, we define $\hat y = y_j$ and as in
Definition \ref{def-yhat}:
\ba
L_j=\hat L,\quad T_j=\hat T, \quad z_j=\hat z,\quad
\psi_{z_j,T_j}(y) = \psi(y;T_j,z_j).
\ea
Note that the choice of the point $z_j$ is not unique as there may be several distance minimizing geodesics from $z_0$
to $x_j$.

\begin{lemma}\label{lem support condition}
For the points $y_j=(t_j,x_j)\in \overline{\Lambda}$, $z_j\in \R^n$, the time $T_j>0$, and the function $u_{j}$ chosen above,
the support condition  (\ref{support condition}) is valid in the cylinder
$\cC(y_j,2R)$ for the function
$ \psi_{z_j,T_j}(y)$, that is,
\beq\label{support condition2}
\supp(u_{j})\cap \cC(y_j,2R)\subset \{y\in \R^{n+1};\ \psi_{z_j,T_j}(y)\leq \psi_{z_j,T_j}(y_j)\}.
\eeq
Moreover, we have $\psi(y_j;T,z_0)=\psi(y_j;T_j,z_j)$ and
\ba
y_j\in \p S(z_j,T_j,\ell,\gamma_j)\cap\cC(y_j,2R),\quad \hbox{where }\gamma_j:=
\sqrt{\psi(y_j;T,z_0)}\ge \gamma.
\ea
\end{lemma}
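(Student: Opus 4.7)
\textbf{Proof plan for Lemma \ref{lem support condition}.}

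The plan is to verify the three assertions in order, with the support condition as the main target. First, I would establish the identity $\psi(y_j;T,z_0)=\psi(y_j;T_j,z_j)$. In case (a) it is tautological since $(z_j,T_j)=(z_0,T)$. In case (b), the point $z_j=\gamma_{z_0,\xi_j}(L_j)$ lies on a minimizing geodesic from $z_0$ to $x_j$, so the triangle inequality is saturated: $d_g(z_0,x_j)=L_j+d_g(z_j,x_j)$. Since $T_j=T-L_j$, this yields $T_j-d_g(z_j,x_j)=T-d_g(z_0,x_j)$, and squaring gives the identity. Combined with $y_j\in\overline{\Lambda}\subset\overline{S(z_0,\ell,T,\gamma)}$, which forces $\psi(y_j;T,z_0)\ge\gamma^2$, this proves $\gamma_j\ge\gamma$. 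For $y_j\in\partial S(z_j,T_j,\ell,\gamma_j)$ I would check the defining inequalities: the level condition $\psi_{z_j,T_j}(y_j)=\gamma_j^2$ holds by construction; $d_g(x_j,z_j)\le T_j$ holds in case (a) because $y_j\in\overline{S(z_0,\ell,T,\gamma)}$, and in case (b) because $d_g(x_j,z_j)=i_0/4$ while $T_j>\gamma+\ell>i_0/4$ by \eqref{property x5}; finally $|t_j|\le T_j-\ell$ follows from $T-d_g(x_j,z_0)\ge|t_j|$ (a consequence of $\gamma_j^2+t_j^2=(T-d_g(x_j,z_0))^2$) together with $\ell\le i_0/4$, giving $T_j-\ell=(T-d_g(x_j,z_0))+(i_0/4-\ell)\ge|t_j|$.

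The key ingredient for the support condition is the pointwise comparison
\[
\psi_{z_j,T_j}(y)=(T_j-d_g(z_j,x))^2-t^2\ \le\ (T-d_g(z_0,x))^2-t^2=\psi(y;T,z_0)
\]
valid in $\cC(y_j,2R)$. This follows from the (reverse) triangle inequality $d_g(z_0,x)\le L_j+d_g(z_j,x)$, which rearranges to $T_j-d_g(z_j,x)\le T-d_g(z_0,x)$; positivity of both sides on $\cC(y_j,2R)$ is ensured because at $y_j$ we have $T-d_g(x_j,z_0)\ge|t_j|+\text{something positive coming from }\gamma_j\ge\gamma$, and $R$ is chosen small enough (as in Lemma \ref{lm_expo1 application}) to preserve positivity.

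Now I would argue by contraposition. Suppose $y=(t,x)\in\cC(y_j,2R)$ satisfies $\psi_{z_j,T_j}(y)>\psi_{z_j,T_j}(y_j)$; I show $y\notin\supp(u_{j})$. By the comparison inequality and the identity from step one,
\[
\psi(y;T,z_0)\ \ge\ \psi_{z_j,T_j}(y)\ >\ \psi_{z_j,T_j}(y_j)\ =\ \psi(y_j;T,z_0)\ \ge\ \gamma^2.
\]
Since $R$ is smaller than $\dist(\overline{\Lambda},\p\Omega_0)$ (which is positive by construction of $\Omega_0$ in Theorem \ref{global2 application}), we have $\cC(y_j,2R)\subset\Omega_0$, hence $|t|\le T-\ell$ and $d_g(x,z_0)\le T$. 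Together with $\psi(y;T,z_0)>\gamma^2$ this places $y\in S(z_0,\ell,T,\gamma)$.

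To conclude, I split into two subcases. If $y\in\cC(y_k,r/2)$ for some $k<j$, then $b_k\equiv1$ on a neighborhood of $y$, so $u_j=(1-b_k)\prod_{i\neq k}(1-b_i)u\equiv0$ near $y$, hence $y\notin\supp(u_j)$. Otherwise $y\in S(z_0,\ell,T,\gamma)\setminus\bigcup_{k<j}\cC(y_k,r/2)$. Since $u\equiv0$ on the open set $\Upsilon=W(z_0,\ell,T)$, we have $\supp(u_j)\subset\supp(u)\subset\Omega\setminus\Upsilon$. If $y\in\Upsilon$, then $y\notin\supp(u_j)$ and we are done; otherwise $y\in S(z_0,\ell,T,\gamma)\setminus\Upsilon=\Lambda$, so $y\in\overline{\Lambda}\setminus\bigcup_{k<j}\cC(y_k,r/2)$. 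But $y_j$ is the argmax of $\psi(\cdot;T,z_0)$ over exactly this set by \eqref{yj choise}, contradicting $\psi(y;T,z_0)>\psi(y_j;T,z_0)$. The main obstacle is the bookkeeping of this last step—ensuring that the geometric exclusions built into $\Omega_0$ and $\Upsilon$ close off every escape route for a hypothetical $y$—so I would state these set-theoretic inclusions carefully before concluding.
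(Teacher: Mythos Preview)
Your proposal is correct and follows essentially the same route as the paper: both arguments rest on the triangle-inequality comparison $T_j-d_g(z_j,x)\le T-d_g(z_0,x)$ (with equality at $x_j$ because $z_j$ lies on a minimizing geodesic), the positivity of $T_j-d_g(z_j,x)$ on $\cC(y_j,2R)$ to pass to squares, and the argmax definition of $y_j$ to conclude the support condition. Your write-up is more explicit than the paper's in two places---the case split on whether $y\in\cC(y_k,r/2)$ for some $k<j$, and the verification of the side constraints $|t_j|\le T_j-\ell$, $d_g(x_j,z_j)\le T_j$ for $y_j\in\partial S(z_j,T_j,\ell,\gamma_j)$---while the paper gives the positivity bound $d_g(x,z_j)+L_j\le 2R\sqrt{b_0}+T-\gamma\le T$ more concretely than your ``$R$ is chosen small enough'' remark; you may wish to quote the explicit choice $2R<b_0^{-1/2}\gamma$ there.
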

\begin{proof}
If $\ell\leq d_g(z_0,x_j)\leq \frac{7i_0}{8}$, then the property is trivial because of $\psi(y;T_j,z_j)=\psi(y;T,z_0)$ and the definition of $y_j \in {\cal E} \subset \overline{\Lambda} $.
\\
Assume now that $d_g(z_0,x_j)> \frac{7i_0}{8}$.
Recall that by definition of $R<R_0$ in the proof of Lemma \ref{lm_expo1 application}, we have
$2 R<b_0^{-1/2}\frac{\gamma^2}{T-\ell} \le b_0^{-1/2} \gamma$.
Let us consider $(t,x)\in \cC(y_j,2R)$.
By the triangle inequality and the definition of $L_j$ we have
\ba
d_g(x,z_j)+L_j\geq d_g(x,z_0),\quad
d_g(x_j,z_j)+L_j=d_g(x_j,z_0).
\ea
%
%
This yields
\beq\label{power 1}
T-(d_g(x,z_j)+L_j)\leq T-d_g(x,z_0).
\eeq
Since
\ba
d_g(x,z_j)+L_j&\leq&
d_g(x,x_j)+d_g(x_j,z_j)+L_j\;\leq\; d_g(x,x_j)+d_g(x_j,z_0)\\
&\leq & 2Rb_0^{1/2}+ T-\gamma\; \leq \; T,
\ea
we have $T-(d_g(x,z_j)+L_j)\geq 0$, and (\ref{power 1}) implies
\beq \label{EE A}
\!\!\!\!\!\!\psi(t,x;T_j,z_j)\!=\!\big(T-L_j-d_g(x,z_j)\big)^2-t^2
\leq \big(T-d_g(x,z_0)\big)^2-t^2 \!=\! \psi(t,x;T,z_0).
\eeq
Hence,
\ba
& & \supp(u_j)\cap \cC(y_j,2R)\subset
\\
& &\{(t,x)\in \cC(y_j,2R);\ \psi(t,x;T,z_0)\leq \gamma_j^2\}\subset \{(t,x)\in \cC(y_j,2R);\ \psi(t,x;T_j,z_j)\leq \gamma_j^2\}
\ea
%
Moreover, when $\tilde x\in \gamma_{z_0,\xi_j}([L_j,d_j])$, where
 $\gamma_{z_0,\xi_j}([0,d_j])$ is   a length minimizing minimizing geodesic connecting $z_0$ to $x_j$,
   and $\tilde t\in \R$, we have
 $d_g(\tilde x,z_j)+L_j=d_g(\tilde x,z_0)$ and
\beq \label{EE B}
\psi(\tilde t,\tilde  x; T_j,z_j)&=&\big(T-L_j-d_g(\tilde x,z_j)\big)^2-\tilde t\,{}^2\\ \nonumber
 &=&\big(T-d_g(\tilde x,z_0)\big)^2-\tilde t\,{}^2\; =\; \psi(\tilde t,x; T,z_0).
\eeq
In particular, when $(\tilde t,\tilde x)$ is equal to $y_j=(t_j,x_j)$, we see that
$\psi(t_j,x_j; T_j,z_j)=\psi(t_j,x_j; T,z_0)$.
%
\\
The above implies that
\beq\label{eq: implication}
\cC(y_j,2R)\cap
S(z_j,T_j,\ell,\gamma_j)
\subset
\cC(y_j,2R)\cap
S(z_0,T,\ell,\gamma_j)
.\eeq
Note that  the boundary of $S(z_0,T,\ell,\gamma_j)$
may be non-smooth in the ball $\cC(y_j,2R)$, while
the boundary of $S(z_j,T_j,\ell,\gamma_j)$
is smooth. That is why we have introduced the new function $\psi_{z_j,T_j}$.
We also recall that $\cC(y_j,2R) \subset S(z_0,T,\ell,\frac{\gamma}{\sqrt{2}})$ and $\cC(y_j,2R) \subset S(z_j,T_j,\ell,\frac{\gamma_j}{\sqrt{2}})$.\\
By the construction of $u_j$ and its support and the inclusion (\ref{eq: implication}) we deduce that
\beq
u_{j}=0,\quad \hbox{for }y\in{\cC(y_j,2R)\cap S(z_j,T_j,\ell,\gamma_j)}.
\eeq
Moreover, since $\psi(t_j,x_j; T_j,z_j)= \psi(t_j,x_j; T,z_0)=\gamma_j^2$, we have
 $y_j\in \p S(z_j,T_j,\ell,\gamma_j)\cap \cC(y_j,2R)$.
\end{proof}

\noindent{\bf Proof of  Theorem \ref{global2 application}.}
We apply Theorem \ref{global2} in a special way. As mentioned before, here Lemma \ref{lm_expo1 application} replaces Lemma \ref{lm_expo1}.\\
Set $y_j$ like in \eqref{yj choise}, and $u_j$ like in \eqref{u j iteration}.\\
Step 1. {\it Within the injectivity radius.}\quad Let $d_g(z_0,x_j) \le  7i_0/8$.
Define like in \eqref{notations 1} and Lemma \ref{lm_expo1 application}.
\ba
\Omega_{0,1}&=& \Omega_3(z_{0},T,\ell,\gamma),\\
\psi_1(y) &=& \psi(y;T,z_0),\\
\Lambda_1 &=& \Omega_2(z_0,T,\ell,\gamma_1).
\quad \gamma_1= T-\frac{5 i_0}{8} \ge \gamma.
\ea
Here $\psi_{max,1} = (T-\ell)^2$, $\psi_{min,1} = \gamma_1^2$.
Observe that $\Lambda_1 \subseteq \Omega_2(z_0,T,\ell,\gamma)$, the set used in Lemma \ref{lm_expo1 application} to compute the uniform radius $R$.
By construction every $y \in \cC(y_j, 2R)$ is such that $\frac{\ell}{4} \le d_g(z_0,x_j) \le \frac{7 i_0}{8}$, hence  $x \to d_g(z_0,x)$ is $C^3-$smooth in $\cC(y_j, 2R)$. Moreover $\psi_1$ is $C^3(\Omega_{0,1})$ and hence regular enough to apply the local stability result of Lemma \ref{lm_expo1 application}.
Here we are in the case where $\hat y = y_j$ is like
Definition \ref{def-yhat} a).
The condition \eqref{support condition} is fulfilled by $u_j$ due to the initial assumption that $u=0$ in $W(z_{0},\ell,T)$ and the construction of $u_j$ step by step. Call $N_1$ the number of points $y_j$ used to cover $\Lambda_1$.
If $T \le 7i_0/8$, then the procedure stops here. If also $T \le 5i_0/8$, then it is enough to use a fraction of $i_0$ above to define $\Omega_2$ and $\Omega_3$.
\\
Step $j > N_1$. Case $T > 7i_0/8.$\\
Here we change $\Omega_{0,j}$, $\psi_j(y)$ and $\Lambda_j$ at each step.
We have 2 cases:\\
a) If $y_j \in \Omega_{0,1}\setminus \Lambda_1$, then we simply consider $\Omega_{0,j}=\Omega_{0,1}$ and $\psi_j = \psi_1$ (that is $C^3$ regular since $d_g(x_j,z_0) \le 7i_0/8$). Here
$\Lambda_j = \Omega_2(z_0,T,\ell,\gamma_j)$, with $\gamma_j = \sqrt{\psi(y_j;T,z_0)}$, but
${\cal E}_j = \{y_j\}$,
in the sense that we apply the local unique continuation step just once, in a cylinder $\cC(y_j, 2R)$ centred in $y_j \in \{y; \psi_1(y) = \psi_1(y_j)\}$.
Observe that $\Lambda_j \subseteq \Omega_2(z_0,T,\ell,\gamma)$.
Again for $\hat y = y_j$ holds
Definition \ref{def-yhat} a) and
the condition \eqref{support condition} is fulfilled by construction. By Remark \ref{remarks} 2.b) there is no need of defining $\psi_{min,j}$ here.
\\
b) If $y_j \notin \Omega_{0,1}$. Then, $d_g(z_0,x_j) > 7 i_0/8$. \\
Here we are outside of the domain where $\psi_1$ is certainly smooth, since the function $x \to d_g(z_0,x)$ can fail to be $C^2-$smooth in $\cC(y_j, 2R)$.
So even if $y_j \in \{y; \psi_1(y) = \psi_1(y_j)\}$, to apply the local stability we choose another function $\psi_j$ passing through $y_j$ and having the good properties outlined in Lemma \ref{lem support condition}.
Calling $\hat y = y_j$ and defining $z_j,T_j,\psi_j$ as in
Definition \ref{def-yhat} b), we can consider
\ba
\Omega_{0,j} &=& \Omega_3(z_j,T_j,\ell,\gamma),\\
\psi_j(y) &=& \psi(y;T_j,z_j),\\
\Lambda_j &=& \Omega_2(z_j,T_j,\ell,\gamma_j), \quad \gamma_j = \sqrt{\psi(y_j;T,z_0)} \ge \gamma.
\ea
Observe that $\Lambda_j \subseteq \Omega_2(z_j,T_j,\ell,\gamma)$.
Again we have ${\cal E}_j = \{y_j\}$, in the sense that we apply unique continuation just in a cylinder $\cC(y_j, 2R)$ centred in $y_j \in \{y; \psi_j(y) = \psi_j(y_j)\}$.
The condition $\supp(u_j) \cap \cC(y_j,2R) \subset \{y; \psi_j(y) \le \psi_j(y_j)\}$ is fulfilled due to Lemma \ref{lem support condition}. By Remark \ref{remarks} 2.b) there is no need of defining $\psi_{min,j}$ here.
\\
Notice that, due to our uniform estimates, the radii $R$ and $r$ remain unchanged for every $y_j$ and the other constants of the Table \eqref{tab1} are chosen uniformly. This implies that $c_{156,*}=c_{156,1}$.
We also recall that $\cC(y_j,2R) \subset S(z_0,T,\ell,\frac{\gamma}{\sqrt{2}})$ for every $j$,
by the construction of the points $y_j$ and the choice of $R$.
Here $l(y) \in C^{\infty}_0(\R^{n+1})$ satisfies  $l = 1$ on $\cup_{k=1}^{J_0} \cC(y_k,2R)$, $0\le l \le 1$ and supp$(l) \subset S(z_0,T,\ell,\frac{\gamma}{\sqrt{2}})$, see Remark \ref{remarks}-4.
The coefficient $c_{163}$ is computed like $c_{161}$.
\hfill$\square$\medskip

\begin{remark}
We remark that an alternative proof of Th. \ref{global2 application} is possible by applying Th. \ref{global2} in the following way.
Define a net of center points $(t_k,z_k)$ for the translated hyperbolic functions:
$$\psi(y;T_k,z_k,t_k) = (T_k-d_g(x,z_k))^2-(t-t_k)^2.$$
Choose $\Upsilon=W(z_0,,T,\ell)$, $\Omega_{0,k} \subset \{y; y\in [-T_k +t_k, T_k+ t_k ]\times \R^n;\, \psi(y;T_k,z_k,t_k) \ge \gamma_k^2/2, \,T_k\ge d_g(x,z_k)\}$ and $\Lambda_{k} \subset \{y; y\in [-T_k +t_k, T_k+ t_k ]\times \R^n;\, \psi(y;T_k,z_k,t_k) \ge \gamma_k^2, \,T_k\ge d_g(x,z_k)\}$. The construction is similar to the one in Figure 1 of section \ref{sec2}. In this case one does not need to introduce the points $\hat y$ of Definition \ref{def-yhat}. The parameters $(t_k,z_k,T_k,\gamma_k)$ should be chosen such that $\Omega_{0,k}$ is  contained  in the domain $0 < d_g(z_k,x) \le \frac{7}{8}i_0$ (to guarantee the regularity of $\psi(y;T_k,z_k,t_k)$).
 Moreover $\Lambda_{k} \subset \Sigma(z_0,\ell,T)$ and their union should cover a subset of the domain of influence $\Sigma(z_0,\ell,T)$.
\end{remark}

\subsubsection{The case of solutions with small values in a cylinder}

Our purpose is to reformulate Theorem \ref{global2 application} for a wave equation with vanishing source term and a solution
$u$ that is no longer vanishing but it is small inside a cylindrical set.

\begin{corollary}\label{cor: source is zero}
Under the conditions of Assumption A5, let $z_0\in \R^n$.
Also, let $\Omega=(-T,T)\times B_g(z_0,T+\ell)$,
$\Omega_1= S(z_0,\ell,T,\frac{\gamma}{\sqrt{2}}) \setminus \{(t,x): t\in \R,   d_g(z_0,x) \le \ell/4 \},$
$\Omega_2= S(z_0,\ell,T, \gamma)$.
Assume that $u \in H^1(\Omega)$ satisfies
\ba
P(x,D) w(t,x)=0,\quad \hbox{for } (t,x)\in \Omega
\ea
and define
\beq\label{eq: original support modified}
W_1=(-T+\ell,T-\ell)\times B_g(z_0,\ell+\gamma).
\eeq
Then for every $0<\theta < 1$ we have
\begin{eqnarray}\label{corollary result}
\|w\|_{L^{2}(\Omega_2\setminus W_1)} \le c_{166}\frac{\|w\|_{H^1(\Omega_1\setminus W_0)}}
{\Big(\ln \Big(e + \frac{\|w\|_{H^1(\Omega_1\setminus W_0)}}{C' \|w|_{W_1}\|_{H^1(W_1)}})\Big)^\theta}\ .
\end{eqnarray}
%
%
Here, $c_{166}$ depends only on $a_0,b_0,b_3,T,
\gamma,\ell $, $i_0,$  and $\theta$.
\end{corollary}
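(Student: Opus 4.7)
The plan is to reduce Corollary \ref{cor: source is zero} to Theorem \ref{global2 application} via a standard cut-off argument that converts the ``$w$ small on $W_1$'' condition into a controllable source term. Let $\chi\in C^\infty(\R^n)$ depend only on $d_g(z_0,\cdot)$, with $\chi\equiv 0$ on $B_g(z_0,\ell)$ and $\chi\equiv 1$ outside $B_g(z_0,\ell+\gamma)$, and with $\|\chi\|_{C^2}$ controlled by $\gamma$. Set $u:=\chi w$ and $f:=Pu$.

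Because $\chi$ is a purely spatial cut-off vanishing on $B_g(z_0,\ell)$, the function $u$ satisfies $u|_{W_0}=0$ for $W_0=W(z_0,\ell,T)=\Upsilon$, so the support condition \eqref{eq: original support} of Theorem \ref{global2 application} holds. Since $Pw=0$, the source reduces to a commutator
\begin{eqnarray*}
f = Pu = [P,\chi]w,
\end{eqnarray*}
which is a first-order differential operator in $w$ with coefficients supported in the spatial annulus $\{\ell \le d_g(z_0,x)\le \ell+\gamma\}$ and bounded by $\|\chi\|_{C^2}$ times the $C^1$-norms of the coefficients of $P$. I would then verify geometrically that, inside $\Omega_1=S(z_0,\ell,T,\gamma/\sqrt 2)\setminus\{d_g(z_0,x)\le\ell/4\}$, the defining inequality $(T-d_g(z_0,x))^2-t^2\ge\gamma^2/2$ combined with $d_g(z_0,x)\ge\ell$ forces $|t|\le\sqrt{(T-\ell)^2-\gamma^2/2}<T-\ell$, yielding $\supp(f)\cap\Omega_1\subset W_1$ and hence
\begin{eqnarray*}
\|Pu\|_{L^2(\Omega_1)} \le C'\|w\|_{H^1(W_1)}
\end{eqnarray*}
with an explicit $C'$ depending on $\gamma,\ell,$ and the coefficients of $P$. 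The pointwise estimate $|u|\le|w|$ together with the vanishing of $u$ on $W_0$ yields $\|u\|_{H^1(\Omega_1)}\le C\|w\|_{H^1(\Omega_1\setminus W_0)}$, with $C$ determined by $\|\chi\|_{C^1}$.

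Applying Theorem \ref{global2 application} to $u$ (with the same $z_0,\ell,T,\gamma,\theta$) produces
\begin{eqnarray*}
\|u\|_{L^2(\Lambda)} \le c_{163}\,\frac{\|u\|_{H^1(\Omega_1)}}{\Big(\ln\big(e+\|u\|_{H^1(\Omega_1)}/\|Pu\|_{L^2(\Omega_1)}\big)\Big)^\theta},
\end{eqnarray*}
where $\Lambda=\Omega_2\setminus W_0\supset \Omega_2\setminus W_1$. Since $\chi\equiv 1$ on $\Omega_2\setminus W_1$ we have $u=w$ there, so the left-hand side dominates $\|w\|_{L^2(\Omega_2\setminus W_1)}$. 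Using the monotonicity of the map $(t,s)\mapsto t/\ln^\theta(e+t/s)$ in each variable, I then substitute the bound on $\|Pu\|_{L^2(\Omega_1)}$ inside the logarithm and the bound on $\|u\|_{H^1(\Omega_1)}$ in the numerator, absorbing the multiplicative factors $C$ and $C'$ into a new constant $c_{166}$ depending on $c_{163},\gamma,\ell,\theta,$ and the $C^1$-norms of the coefficients of $P$.

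The main obstacle I anticipate is twofold: first, the geometric verification that $\supp([P,\chi]w)\cap\Omega_1\subset W_1$ with the precise parameters of the corollary (straightforward but requires unfolding the definitions of $S(z_0,\ell,T,\gamma/\sqrt2)$ and $W_1$); second, the routine-but-fiddly bookkeeping to push the multiplicative constants $C,C'$ through the logarithmic factor so that the estimate lands exactly in the form \eqref{corollary result}. The remainder is a mechanical invocation of Theorem \ref{global2 application}.
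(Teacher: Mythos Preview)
Your proposal is correct and follows essentially the same route as the paper: define a spatial cut-off that kills $w$ on $B_g(z_0,\ell)$, observe that the resulting function vanishes on $W_0$ with source $[P,\chi]w$ supported in $W_1$, apply Theorem~\ref{global2 application}, and push the constants through using monotonicity of $t\mapsto t/(\ln(e+t))^\theta$. One small caveat: constructing $\chi$ as a function of $d_g(z_0,\cdot)$ alone may fail to be $C^2$ when $\ell+\gamma$ exceeds the injectivity radius (since $d_g(z_0,\cdot)$ need not be smooth there); the paper sidesteps this by simply taking any $\eta\in C^2_0(B_g(z_0,\ell+\gamma))$ equal to $1$ on $B_g(z_0,\ell)$, which suffices for the argument.
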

\begin{proof}
Let $B_0=B_g(z_0,\ell)$, $B_1=B_g(z_0,\ell+\gamma)$, and $W_0=W(z_0,\ell,T)=(-T+\ell,T-\ell) \times B_0\subset W_1.$
We use a cut-off function $\eta(x)\in C^2_0(B_1)$, $0\le \eta \le 1$, that is equal
one in $B_0$ and satisfies $\|\eta\|_{C^2(\R^n)}\leq c_0\gamma^{-2}$, where $c_0$ is a uniform constant. Then
$\tilde w(x,t)=(1-\eta(x))w(x,t)$ vanishes in $W_0$ and we have
\ba
& &P(y,D) \tilde w(t,x)=F(t,x),\quad \hbox{in }\Omega,\\
& &F(t,x)=
-g^{jk}(x)(D_j \eta D_k \eta)\tilde w - g^{jk}(x)(D_j \eta D_k \tilde w)-h^j(x)(D_j \eta)\tilde w\; \in L^2(\Omega),
\ea
and since $\eta$ is supported in $B_1$,
\beq\label{F and w}
\|F\|_{L^2(\Omega_1)}\leq
\|F\|_{L^2(W_1)}
\leq c_1\|w|_{W_1}\|_{H^{1}(W_1)},
\eeq
where $c_1$  is a uniform constant.
Also, since $w=\tilde w$ in $\Omega\setminus W_1$, we have
\beq\label {v1}
\|w-\tilde w\|_{H^1(\Omega_1)}&\leq&  c_2\|w|_{W_1}\|_{H^{1}(W_1)},\\
\label {v2}
\|\tilde w\|_{H^1(\Omega_1)}&\leq& c_2 \|w\|_{H^{1}(\Omega_1)}
 \eeq
where $c_2$ is a uniform constant.
Summarizing, above we have seen that
\ba
P(y,D) \tilde w(t,x)& =&F(t,x),\quad \hbox{in }(-T+\ell,T-\ell)\times B_g(z_0,T+\ell),\\
\tilde w|_{W_0}&= &0.
\ea
Moreover $F$ in $(-T+\ell,T-\ell)\times B_g(z_0,T+\ell)
$  vanishes outside of $W_1$ and $F$ is small if $\|w|_{W_1}\|_{H^1(W_1)}$  is small.
Then, applying Theorem \ref{global2 application}  to $\tilde w$ we get
\begin{eqnarray*}
 \|\tilde w\|_{L^{2}(\Omega_2\setminus W_0)}
 \le c_{163} \frac{\|\tilde w\|_{H^{1}(\Omega_1\setminus W_0)}}{\Big(\ln\Big(e+\frac{\|\tilde w\|_{H^{1}(\Omega_1\setminus W_0)}}{\|F\|_{L^{2}(\Omega_1\setminus W_0)}}\Big)\Big)^{\theta}}.
\end{eqnarray*}
As $\|w\|_{L^{2}(\Omega_2\setminus W_1)}\leq \|\tilde w\|_{L^{2}(\Omega_2\setminus W_0)}$,
and by
(\ref{v2}),
$$
\|\tilde w\|_{H^{1}(\Omega_1\setminus W_0)}
\leq c_2 \|w\|_{H^{1}(\Omega_1\setminus W_0)} ,
$$
and
since the function
$t \mapsto \frac{t}{(\ln(e+ t))^{\theta}}$
is increasing for $t\ge 0$, we get
\begin{eqnarray*}
 \|w\|_{L^{2}(\Omega_2\setminus W_1)}
 \le
 c_{163} \frac{c_2 \|w\|_{H^{1}(\Omega_1\setminus W_0)}
 }
 {\Big(\ln\Big(e+\frac{c_2 \|w\|_{H^{1}(\Omega_1\setminus W_0)}}
 {
 c_1\|w|_{W_1}\|_{H^{1}(W_1))}
 }\Big)\Big)^{\theta}}.
 \end{eqnarray*}
This proves the claim with $c_{166}=\max(c_2/c_1,c_2c_{163})$.
\end{proof}

{\bf Acknowledgment}
RB was  supported by the postdoc programme of the Hausdorff Center for Mathematics in Bonn and by
the AXA Mittag-Leffler Fellowship Project, funded by the AXA Research Fund.
YK and RB were partly supported by EPSRC.
ML and RB were partly supported by the Academy of Finland, CoE-project 250215 and project 273979.
This work was partly done at the Isaac Newton Institute for Mathematical Sciences and the Mittag-Leffler Institute.
\\\\
When finalizing this article, it came to our attention that another group, Camille~Laurent and Matthieu~L\'{e}autaud has been working independently on issues related to this paper.

\section{Appendix}
\subsection{A: Geometric constants}

We write the table of the constants used in the article. This is a special version of Table (5.1) in \cite{B}, since now all the coefficients are calculated independently of the center point $y_k$ and of the local information.
\\
In order to get the uniform coefficients we use the same notations as in Section 3.1 of \cite{B}:
%
\begin{itemize}
\item[a)]
By Assumption A1,
we consider the case of the wave operator \eqref{wave_op} with principal symbol $p(y,\xi) = -\xi_0^2 + \sum_{jk=1}^n g^{jk}(x) \xi_j \xi_k$, with \;
$0 < a_1\, \delta^{jk} \le g^{jk}(x) \le b_1\, \delta^{jk}$, $a_1,\,b_1 >0$.\\
Call $\xi=(\xi_0,\tilde\xi)\in \R \times \R^{n}$, where $|\tilde\xi|^2 = \sum_{j=1}^n \xi_j^2$.
\item[b)]
We consider the function $\psi \in C^{2,\rho}(\R^{n+1})$, for some  $\rho\in(0,1)$,  such that $p(y,\psi'(y))\neq 0$ and $\psi'(y)\neq 0 $ in a domain $\Omega_0 \subseteq \Omega$.
Let $y_0 \in \Omega_0$ be a general point lying on the level set $S=\{y;\,\psi(y)=0\}$.
Call $p_1 = \min_{y \in \overline{\Omega}_0} p(y,\psi')>0, \,C_l= \min_{y \in \overline{\Omega}_0}{|\psi'(y)|}>0$.
\end{itemize}
Moreover we use Einstein's convention for the repeated indexes.\\
We recall the three Steps - procedure to calculate the geometric parameters in \cite{B}.
\begin{itemize}
\item[\bf Step 1].
Given a function $\psi \in C^{2,\rho}(\R^{n+1})$ fulfilling the assumptions above in a domain  $\Omega_0$,
we find positive constants $M_2, \, M_1,\, M_P$ such that the following inequality holds true
\begin{eqnarray*}\label{eq_psi}
&& M_2 \xi_0^2 + M_1\Big( \frac{|p(y,\xi+i\tau\psi'(y))|^2}{\tau^2 + |\xi|^2} + |\langle p'_\xi(y,\xi+i\tau\psi'(y),\psi'(y)\rangle|^2\Big)  \\ \nonumber
&& + \frac{ \{\overline{p(y,\xi+i\tau\psi'(y))},p(y,\xi+i\tau\psi'(y))\}}{2i \tau}
\ge M_P (\tau^2 + |\xi|^2)
\end{eqnarray*}
for every $\xi \in \R \times \R^{n},\,\xi \neq 0,\, \tau \in \R$.
The previous inequality proves that the hypersurface $S=\{y;\,\psi(y)=0\}$ is conormally strongly pseudoconvex w.r.t. $P$ in $\Omega_0$.
\item[\bf Step 2].  For $\phi=e^{\lambda \psi}$, with $y_0$ on the level set $\phi(y)=1$, we find $\lambda>0 $ such that the following inequality holds true
\begin{eqnarray*}\label{eq_phi}
&& \qquad M_2 \xi_0^2 + \frac{M_1}{\min\{1,\lambda^2 \phi^2(y)\}}  \frac{|p(y,\xi+i\tau\phi'(y))|^2}{\tau^2 + |\xi|^2}
\\ \nonumber
&& + \frac{1}{\lambda \phi(y)}\frac{ \{\overline{p(y,\xi+i\tau\phi'(y))},p(y,\xi+i\tau\phi'(y))\}}{2i \tau}
 \ge M_P \min\{1,\lambda^2 \phi^2(y)\}(\tau^2 + |\xi|^2)
\end{eqnarray*}
for every $\xi \in \R \times \R^{n},\,\xi \neq 0,\, \tau \in \R$.
The previous inequality proves that the function $\phi$ is  conormally strongly pseudoconvex w.r.t. $P$ in $\Omega_0$.
\item[\bf Step 3].  We consider a perturbation of $\phi$ by the shifted 2nd order polynomial centred in the point $y_0$,
\begin{eqnarray}\label{def_f}
f(y)= \sum_{|\upsilon|\le 2} (\partial^\upsilon {\phi})(y_0)\, (y-y_0)^\upsilon / \upsilon! - \sigma |y-y_0|^2.
\end{eqnarray}
In a ball $B(y_0,R_1) \subset \Omega_0$ where $f'\neq 0$ we define
$$\phi_0 = \min_{y \in B(y_0,R_1)} \phi(y), \qquad \phi_M = \max_{y \in B(y_0,R_1)} \phi(y). $$
We find $\sigma$ and $R_2 >0$ small enough such that in the ball $B(y_0,R_2)$  the following inequalities hold true:
$\quad f(y)< \phi(y) \quad$ in $B(y_0,\,R_2)\backslash\{y_0\}$, \quad and
\begin{eqnarray*}\label{eq_f}
&& \quad M_2 \xi_0^2 + 2 M_1  \frac{|p(y,\xi+i\tau f'(y))|^2 }{\tau^2 + |\xi|^2}
+ \frac{ \{\overline{p(y,\xi+i\tau f'(y))},p(y,\xi+i\tau f'(y))\}}{(\lambda \phi_0)2i \tau} \\
\nonumber
&&  \quad \ge \frac{1}{2}(\tau^2 + |\xi|^2)\,.\quad
\end{eqnarray*}
The previous inequality proves that the function $f$ is conormally strongly pseudoconvex w.r.t. $P$ in $B(y_0,R_2)$.
\end{itemize}
\begin{paragraph} {\bf How to obtain the uniform Table \eqref{tab1}, starting from the table in \cite{B}}.\\
We need to recalculate $R_2$ invariantly. Hence we take:
\begin{eqnarray} \label{Appendix formula1}
\sigma &=& 2 c_T R^{\rho}_2, \, c_T = n |\lambda  \psi|_{max, \Omega_0}\\
\nonumber 
|\lambda  \psi|_{max, \Omega_0} &=&
\phi_M \max(
 \lambda |\psi''|_{C^{0,\rho}(\Omega_0)},\,\lambda^2 |\psi|_{C^{0,1}(\Omega_0)}|\psi''|_{C^{0}(\Omega_0)},\,\lambda^3 |\psi|_{C^{0,1}(\Omega_0)}|\psi'|^2_{C^{0}(\Omega_0)})
\end{eqnarray}
and after a first estimate in $B_{R_1}(y_0)$ we estimate $M_R$ calculating the norms in $\Omega_0$ instead of in $B_{R_1}(y_0)$:
\begin{eqnarray*}
M_{R}\ge 1- c_{100}
\Big[
|\lambda \psi|^2_{max, {\Omega_0}} R_2^{2(1+\rho)} M_1(1+\lambda^2\phi_M^2|\psi'|^2_{C^0(\Omega_0)} )
\\
+ |\lambda \psi|_{max, {\Omega_0}} R_2^\rho \frac{1}{\lambda \phi_0}
\big(1+\lambda^2\phi_M^2|\psi'|^2_{C^0(\Omega_0)}+\lambda^2\phi_M^2(|\psi'|_{C^0(\Omega_0)}|\psi''|_{C^0(\Omega_0)} + \lambda|\psi'|^3_{C^0(\Omega_0)})\big)
\Big]\,.
\end{eqnarray*}
Recall also that
\begin{eqnarray*}
|f'|_{C^0(B_{R_2}(y_0))} \le
\lambda \phi_M |\psi'|_{C^0(\Omega_0)} + 5n |\lambda  \psi|_{max, \Omega_0} R_2^{1+\rho},
\\
|f''|_{C^0(B_{R_2}(y_0))} \le
\lambda \phi_M ( |\psi''|_{C^0} + \lambda|\psi'|_{C^0}^2) + 4n |\lambda  \psi|_{max, \Omega_0}R_2^{\rho},\\
|\phi''(y)| = |\phi \lambda (\psi'' + \lambda \psi' \times \psi')| \ge e^{-1}\lambda^2 C_l^2 /2.
\end{eqnarray*}
In the table we anticipate the definition of $\epsilon_0$ in order to embody in $R_2$ the condition $R_2 \le \frac{1}{8}\frac{\epsilon_0}{\sqrt{2M_2}}\Big(16 + \frac{1}{16}\Big)^{1/2}$, that is essential to define the inequality in Th. \ref{th_carleman}.
The estimate for the minimum $r$ was done in \cite{B} and we refine it calculating the norms in $\Omega_0$.
\\\\
Consider now the points $y_k \in {\cal E}$ defined in Assumption A4.
For each $k$, via the translation $\psi(y) - \psi(y_k)$, we have $\psi(y_k) = 0$ and we can replace $y_0$ with $y_k$ in the definitions above.
After the previous translation, i.e.
$\phi(y)=\exp(\lambda (\psi(y) - \psi(y_k)) )$,
we still have $e^{-1}= \phi_0 \le \phi(y) \le  \phi_M = e$ in each ball $B_{R_1}(y_k)$
and we can consider $\phi(y_k) = 1$. All these translated curves share the same parameters as in Table \eqref{tab1}.
\end{paragraph}
\begin{remark}\label{remark-table}
In \cite{B} we assumed that $\mbox{dist}_{\mathbb{\R}^{n+1}}\{\partial \Omega_0,\partial \Omega \}>0$.\\
Here we have the condition $\mbox{dist}_{\mathbb{\R}^{n+1}}\{\partial \Omega_0, \Omega_a \} > 0$, and then
we assume that $\Omega_a \subset \Omega_1$.
\\
In Section \ref{sec3} we apply Table \ref{tab1} to a $C^3-$function $\psi$. Instead of calculating the $C^{2,\rho}-$norm, it is more practical to use the $C^3-$norm, but this requires the following modifications. Set $\rho=1$ and $c_T$ in place of $n|\lambda \psi|_{max, {\Omega_0}}$, where
\ba
|\phi''|_{C^0(B_{R_2})} \le c_{T1}:= \lambda \phi_M (|\psi''|_{C^0(\Omega_0)}+\lambda |\psi'|^2_{C^0(\Omega_0)}),
\\
|\phi'''|_{C^0(B_{R_2})} \le c_{T2}:= \lambda \phi_M\big( 3\lambda |\psi'|_{C^0(\Omega_0)}|\psi''|_{C^0(\Omega_0)}+\lambda^2|\psi'|^3_{C^0(\Omega_0)}+|\psi'''|_{C^0(\Omega_0)}\big),\\
\sigma = 2 c_T R_2,\quad c_T = c_{T1}+ c_{T2},\; \delta = c_T q^2 \frac{R_2^3}{8},\; q= \frac{1}{4}(16 + \frac{1}{16})^{-1/2},\\
f-\phi \le -c_T q^2 R_2^2,\; |\phi'-f'|_{C^0(B_{R_2})} \le 5 c_T R_2^2,\;|\phi''-f''|_{C^0(B_{R_2})} \le 5 c_T R_2.
\ea
The bound for $r$ remains unchanged since $|\phi''|_{C^0(B_{R_2})} q^2 R_2^3/8 \le \delta$ (see \cite{B}).
\end{remark}

\vfill\newpage

\begin{center}
\begin{eqnarray}\label{tab1}
\mbox{\qquad Table for the constants calculated as in \cite{B}}
\end{eqnarray}
\begin{tabular}{p{0.8cm}|p{0.3cm}|p{12cm}}
Name &  & Limit Value \\
\hline
$C_3$ & $\ge$ & $20 (1+n^2 |g^{jk}|^2_{C^1(\Omega_0)})|\psi'|_{C^1(\Omega_0)}(1+|\psi'|_{C^0(\Omega_0)}^2)$ \\
\hline
$M_P$ & $\le$ & 1\\
\hline
$M_1$ & $\ge$ & $(M_P + C_3) \max\Big\{\frac{2}{a_1^2},\; \frac{1}{2(p_1)^2} \Big\}$ \\
\hline
$M_2$ & $\ge$ & $\frac{2}{\min\{1,a_1\}}(M_P + C_3)+ \frac{(b_1+ a_1)}{2}M_1$\\
\hline
$\lambda$ & $\ge$ & $\max\{M_1,e,\frac{2|\psi''|_{C^0(\Omega_0)}}{C_l^2}\}$ \\
\hline
$\phi_0$ & $\ge$ & $e^{-1}$ \\
\hline
$\phi_M$ & $\le$ & $e$ \\
\hline
$R_1$ & $\le$ & $\min\{1, \mbox{dist}_{\mathbb{\R}^{n+1}}\{\partial \Omega_0, \Omega_a \}, \frac{1}{\lambda|\psi'|_{C^0(\Omega_0)}}\}$ \\
\hline
$c_{100}$\quad & $\ge$ & $10(1+n^4|g^{jk}|^2_{C^1(\Omega_0)})$\\
\hline
$\epsilon_0$ & $\le$ & $\frac{1}{2n(\lambda \phi_M(|\psi''|_{C^0(\Omega_0)} + \lambda |\psi'|^2_{C^0(\Omega_0)})+4n |\lambda \psi|_{max,\Omega_0})}$\\
\hline
$R_2$ & $\le$ & $\min\Big
\{R_1,\, \Big(\frac{ C_l}{2 \phi_M (|\psi''|_{C^0(\Omega_0)} + \lambda |\psi'|^2_{C^0(\Omega_0)}  + 4n|\lambda \psi|_{max,\Omega_0}/\lambda )}\Big),\,
\Big(\frac{\lambda^2 \phi_M C_l^2
}{4n|\lambda \psi|_{max,\Omega_0}}\Big)^{\frac{1}{\rho}}
$,
\\
 &  &
$\Big(\frac{1}{4c_{100}|\lambda \psi|^2_{max,\Omega_0}  M_1(1+\lambda^2\phi_M^2|\psi'|^2_{C^0(\Omega_0)})}\Big)^{\frac{1}{2+2\rho}}$,
$\frac{1}{8}\frac{\epsilon_0}{\sqrt{2M_2}}\Big(16 + \frac{1}{16}\Big)^{1/2}$,
\\
 &  &
$\Big(\frac{\lambda \phi_0} {4c_{100}|\lambda \psi|_{max,\Omega_0}
\big(1+\lambda^2\phi_M^2|\psi'|^2_{C^0(\Omega_0)}+\lambda^2\phi_M^2(|\psi'|_{C^0(\Omega_0)}|\psi''|_{C^0(\Omega_0)} + \lambda|\psi'|^3_{C^0(\Omega_0)} \big)}\Big)^{\frac{1}{\rho}}\Big\}$\\
\hline
$\sigma$ & $\ge$  & $2n |\lambda  \psi|_{max, \Omega_0}R_2^{\rho}$\\
\hline
$\tau_0$ & $\ge$ & $\max\{1,\, 64 \Big(4 M_1 + \frac{1}{4\lambda \phi_0}\Big) \Big(
\big(\lambda \phi_M ( |\psi''|_{C^0} + \lambda|\psi'|_{C^0}^2) + 4n |\lambda  \psi|_{max, \Omega_0}R_2^{\rho}\big)^2 (1+n^2|g^{jk}|_{C^0(\Omega_0)})^2 + n|h|^2_{C^{0}(\Omega_0)}(2+2\big(\lambda \phi_M |\psi'|_{C^0(\Omega_0)} + 5n |\lambda  \psi|_{max, \Omega_0} R_2^{1+\rho}\big)^2+ 2|q|^2_{C^{0}(\Omega_0)} \Big)\}$\\
\hline
$R$ & $\le$ & $\frac{1}{4}\Big(16 + \frac{1}{16}\Big)^{-1/2} R_2$ \\
\hline
$\delta$ & $\le$ & $n \frac{1}{32}\Big(16 + \frac{1}{16}\Big)^{-1} |\lambda  \psi|_{max, \Omega_0}R_2^{2+\rho}$\\
\hline
$r$ & $\le$ & $\frac{n \lambda^2 C_l^2 \frac{1}{4}\big(16 + \frac{1}{16}\big)^{-1}R_2^{2+\rho}}{2e\big(|\phi'|_{C^{0}(\Omega_0)}+5n|\phi''|_{C^{0,\rho}(\Omega_0)}\big)}$\\
\hline
$c_{1,T}$ & $\ge$ & $ \sqrt{4 \Big(\frac{4 M_1}{\tau_0} + \frac{1}{4(\lambda \phi_0)}\Big)}$\\
\hline
$c_{2,T}$ & $\ge$ &
$
(\frac{1}{2}+ \sqrt{2M_2})(1+\frac{2|\chi_1'|_{C^0(\Omega_0)}}{\tau_0 4R}) + \frac{c_{1,T}}{\sqrt{\tau_0}}\,c_{133}
$
\\
\hline
$c_{133}$ & $\ge$ & $ 2 (1+
n^2|g^{jk}|_{C^0(\Omega_0)})\Big(\frac{|\chi_1''|_{C^0(\Omega_0)}}{\tau_0(4R)^2} + \frac{|\chi_1'|_{C^0(\Omega_0)}}{4R}(1+\lambda \phi_M |\psi'|_{C^0(\Omega_0)} + 5n |\lambda  \psi|_{max, \Omega_0} R_2^{1+\rho}+\frac{|h|_{L^\infty{(\Omega_0)}}}{\tau_0})\Big)$
\\
\hline
\end{tabular}
\end{center}
\subsubsection{Uniform estimates for the hyperbolic function $\psi$}
\begin{paragraph}
{\bf Uniform regularity estimates for the distance function $d_g$.}
\;
It is a well known fact, see \cite{DeTurck}, that if a metric is $C^m$-smooth, then the
Riemannian normal coordinates are $C^{m-1}$-smooth, and the metric tensor
in these coordinates is $C^{m-2}$-smooth. In particular, the distance function $x\mapsto d_g(x,z)$ is
$C^{m-1}$-smooth. In the following we consider how to obtain uniform
bounds for the distance function under suitable assumptions.

Let $m\geq 2$ be an integer, and $a,r_0>0$, $Q_0>0$ be fixed parameters.

Assume that on $M$ are local coordinates $(U_k,\Psi_k)$, where
 $U_k\subset M$ are open and $\Psi_k:U_k\to \R^n$ such that:
\\
\noindent
(i) For any $x\in M$ there is $k$ such that the metric balls $B_g(x,r_0)\subset U_k$,
Let $W_k=\Psi_k(U_k)\subset \R^n$.
\\
\noindent
(ii) The metric tensor satisfies in local coordinates
 \beq\label{q 4b}
\frac 14\, I\leq (\Psi_k)_*g\leq 4\, I,
\eeq
\noindent (iii) We have $\|(\Psi_k)_*g\|_{C^{m}(\overline W_k)}\leq Q_0$,
\\
 \noindent
(iv) The transition functions satisfy:\:
$
\|\Psi_k\circ \Psi_j^{-1}\|_{C^{m+1}(\Psi_j(U_j\cap U_k))}\leq Q_0,
$
\\
 \noindent
(v) The injectivity radius satisfies:\; inj$(M,g)\geq 2r_0=i_0$, where $0< i_0 < \frac{\pi}{2 \sqrt{\Lambda_M}}$ and $\Lambda_M$ is an upper bound for the sectional curvature of $M$.
\\\\
Under the previous assumptions and using the notation $\langle v \rangle=(1+|v|^2)^{1/2}$, one can obtain the estimate for the derivatives of $d_g$ when  $0 < d_g(x_0,x) < i_0$:
\beq \label{7.3}
\left|D^\alpha_x d_g(x_0, x) \right| \leq e^{c^\alpha_{m,n} \langle Q_0 \rangle^4 \langle d_g(x_0, x) \rangle^2}\,d_g(x_0, x)^{(1-|\alpha|)},
\quad |\alpha|\geq 0.
\eeq
Here $c^\alpha_{m,n}\geq 1$ are coefficients which depend only on $m, n$; their value
may be explicitly found from combinatorics.
\end{paragraph}

\noindent Consequently, we consider Assumption A5. Then for all $z\in \R^n$ and
$A_1=\{x\in \R^n;\
\frac 14 \ell\le d_g(x,z)\le \frac78 i_0\}$ we have (see \eqref{7.3} and \cite{DeTurck}
for details)
\beq\label{property 0}
& &\| d_g(\,\cdotp,z)\|_{C^1(A_1)}\leq b_2,
\quad
\| d_g(\,\cdotp,z)\|_{C^3(A_1)}\leq b_2(\ell/4)^{-2},
\eeq
where $b_2$ depends on $a_0,b_0,b_3$, and $i_0$ in an explicit way.
\\
Let $z\in M$, $T > 0$ and recall the 'hyperbolic function' introduced in Definition \ref{def3}
$$
\psi_{z,T}(t,x) :=\psi(t,x;T,z)= (T-d_g(x,z))^2 -t^2 .
$$
In the following we consider properties of this function in order to construct the related Table \eqref{tab1}.\\
{We recall that the principal symbol of the wave operator $P$, at $y=(t,x)$, $\xi=(\xi_0,\xi_1,\dots,\xi_n)\in T_y^*(\R\times \R^n)$ by
$
p(y,\xi) = -\xi_0^2+ \sum_{j,k=1}^n g^{jk}(x)  \xi_j \xi_k .
$
}

\begin{lemma}\label{lem: estimates for psi z}
 Let $z\in \R^n$,
 $A_1=\{x\in \R^n;\
\frac 14 \ell \le d_g(x,z)\le \frac78 i_0\}$ and $\mathcal A= [-T,T]\times A_1$.
 Let $y=(t,x)\in \mathcal A$
 be such that
 $|t|<T-d(x,z)$.
Also,  assume that  $\psi_{z,T}(y)\geq \gamma_I^2$, with $0 <\gamma_I < T$. Then
$d\psi_{z,T}(y)$ is well defined and satisfies
\beq\label{property 1}
& &p(y,d\psi_{z,T}(y))\geq 4\gamma_I^2,\\ \nonumber
& &\min_{y \in {\mathcal A}}| d\psi_{z,T}(y)| \geq 2\gamma_I b_0^{-1/2}. 
\eeq
Moreover, we have
\beq\label{property 2}
& &\|  \psi_{z,T}'\|_{C^0(\mathcal A)}\leq b_4(T+1) ,\nonumber
\\
& &\|  \psi_{z,T}''\|_{C^0(\mathcal A)}\leq b_4 (T+1)((\ell/4)^{-1}+1) ,\nonumber
\\
&& \|  \psi_{z,T}'''\|_{C^0(\mathcal A)}\leq b_4(T+1)((\ell/4)^{-2} +(\ell/4)^{-1}),
\eeq
where $b_4=b_4(a_0,b_0,b_3)$ is a uniform constant.
\end{lemma}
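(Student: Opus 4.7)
The plan is to reduce all three claims to explicit calculation with the formula $\psi_{z,T}(t,x) = (T-s)^2 - t^2$ where $s = d_g(x,z)$, exploiting two structural facts: the eikonal identity $g^{jk}\partial_{x_j}d_g\,\partial_{x_k}d_g = 1$ (which holds throughout $A_1$ since $d_g(x,z) < i_0$ keeps us strictly inside the injectivity radius, so $d_g(\cdot,z)$ is smooth there), and the quantitative estimates \eqref{7.3} for $|D^\alpha_x d_g|$, which remain uniform on $A_1$ because $d_g(x,z)\geq \ell/4$ keeps us bounded away from the singularity at $x=z$.

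First I would compute the differential
\begin{eqnarray*}
d\psi_{z,T} = -2t\,dt - 2(T-s)\,d_x d_g,
\end{eqnarray*}
and substitute directly into the principal symbol. Using the eikonal identity,
\begin{eqnarray*}
p(y,d\psi_{z,T}) = -4t^2 + 4(T-s)^2\, g^{jk}\partial_{x_j}d_g\,\partial_{x_k}d_g = -4t^2 + 4(T-s)^2 = 4\psi_{z,T}(y) \geq 4\gamma_I^2,
\end{eqnarray*}
which is the first line of \eqref{property 1}. From the hypotheses $\psi_{z,T}\geq \gamma_I^2>0$ and $|t|<T-s$ I deduce $T-s\geq \gamma_I$; combined with the matrix bounds \eqref{q 4b summary 2} on $g^{jk}$ (which transfer to a uniform lower bound on $|d\psi_{z,T}|$ via the eikonal identity), this yields $|d\psi_{z,T}|\geq 2\gamma_I b_0^{-1/2}$.

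For the $C^k$ bounds in \eqref{property 2} I would differentiate explicitly: the purely time derivatives are $-2t,\,-2,\,0$; the mixed $t,x$ derivatives of order $\geq 2$ vanish; and the spatial derivatives are
\begin{eqnarray*}
\partial_{x_j}\psi = -2(T-s)\partial_{x_j}d_g,\qquad \partial^2_{x_jx_k}\psi = 2\partial_{x_j}d_g\,\partial_{x_k}d_g - 2(T-s)\partial^2_{x_jx_k}d_g,
\end{eqnarray*}
with the analogous three-term expansion for the third spatial derivative. Applying \eqref{7.3} with $d_g\geq \ell/4$ and $d_g\leq 7i_0/8\leq 7/8$ (so the exponential factor in \eqref{7.3} is absorbed into a constant depending on $Q_0$, and thus on $a_0,b_0,b_3$), one gets $|D^\alpha_x d_g|\leq b_4 (\ell/4)^{1-|\alpha|}$. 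Combined with $|T-s|\leq T$ and $|t|\leq T$, this gives the stated bounds, where the $+1$ summands inside the brackets come from terms built only from first derivatives of $d_g$ (no singular $(\ell/4)^{-|\alpha|}$ factor), while the $(\ell/4)^{1-k}$ factors come from terms of the form $(T-s)\cdot D^\alpha_x d_g$.

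There is no genuine obstacle; the proof is a direct calculation once one assembles the eikonal identity, the a priori distance-function estimate \eqref{7.3}, and the Assumption A5 bounds on $g^{jk}$ and on the injectivity radius. The only point requiring care is the bookkeeping to verify that every factor $(T-s)$ in a derivative of $\psi$ is paired with a sufficiently high-order derivative of $d_g$, so that the final bound scales correctly as $b_4(T+1)(\ell/4)^{1-k}$ for the $k$-th derivative. The constant $b_4$ is then explicitly computable in terms of $a_0,b_0,b_3$ alone.
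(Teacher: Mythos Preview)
Your proposal is correct and follows essentially the same route as the paper: compute $d\psi_{z,T}$ explicitly, plug into $p$ using the eikonal identity $g^{jk}\partial_j d_g\,\partial_k d_g=1$ to get $p(y,d\psi)=4\psi_{z,T}(y)\geq 4\gamma_I^2$, deduce the lower bound on $|d\psi|$ via the metric comparison, and then differentiate $\psi_{z,T}$ up to third order and invoke the uniform bounds \eqref{property 0}/\eqref{7.3} on $d_g(\cdot,z)$ over $A_1$. The only cosmetic difference is that the paper obtains the gradient lower bound from $|d\psi|_{dt^2+g}^2\geq p(y,d\psi)$ directly, whereas you first extract $T-s\geq\gamma_I$; both are equivalent one-line observations.
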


\begin{proof}
Consider the co-normal of the level set of $\psi_{z,T}$,
$$
\nu=d\psi_{z,T}(y) =(-2t\, ,\, -2 (T-d_g(x,z))\partial_{x}d_g(x,z))\in T^*_y(\R\times \R^n).
$$
Using the two following facts :
\ba
\sum_{j,k=1}^n g^{jk}(x) \partial_{x_j}d_g(x,z)\partial_{x_k}d_g(x,z)=
\|\nabla d_g(\,\cdotp,z)\|_g^2=1,
\\
\bigg|-\xi_0^2+ \sum_{j,k=1}^n g^{jk}(x)  \xi_j \xi_k\bigg|\leq
\xi_0^2+ \sum_{j,k=1}^n g^{jk}(x)  \xi_j \xi_k=:\| \xi\|_{dt^2+g}^2,
\ea
we obtain
\beq\label{property 1a}
& &p(y,d\psi_{z,T}(y)) = 4(-t^2 + (T-d_g(x,z))^2)\geq 4\gamma_I^2 > 0,\\ \nonumber
& &| d\psi_{z,T}(y)|_{dt^2+g}\geq \sqrt{4\gamma_I^2}=2\gamma_I\ \hbox{implying}\
| d\psi_{z,T}(y)|\geq 2\gamma_I b_0^{-1/2}. 
\eeq
Let us now analyze derivatives of $\psi_{z,T}$. We have
\ba
 \nabla \psi_{z,T}|_{(t,x)} &=&
 (-2t,- 2(T-d_g(x,z))\nabla_x d_g(x,z)),\\
 \nabla^2 \psi_{z,T}|_{(t,x)} &=&
 (-2,- 2(T-d_g(x,z))\nabla_x^2 d_g(x,z) +2 \nabla_x d_g(x,z )\otimes \nabla_x d_g(x,z )),
 \ea
\ba
 \nabla^3 \psi_{z,T}|_{(t,x)} &=&
  (0,V),\\
  V&=& -2(T-d_g(x,z))\nabla_x^3 d_g(x,z) + 4\nabla_x d_g(x,z )\otimes \nabla^2_x d_g(x,z )\\
  & &+2\nabla_x^2 d_g(x,z )\otimes \nabla_x d_g(x,z ).
\ea
Thus, by calling $\psi' = d\psi_{z,T}$, $\psi'' = \nabla^2 \psi_{z,T}$, $\psi''' = \nabla^3 \psi_{z,T}$, we get
\beq\label{property 2b}
\nonumber
& &\|  \psi_{z,T}'\|_{C^0(\mathcal A)}\leq 4(T+1)\| d_g(\,\cdotp,z)\|_{C^1(A_1)} ,
\\
\nonumber
& &\|  \psi_{z,T}''\|_{C^0(\mathcal A)}\leq 4(T+1)(\| d_g(\,\cdotp,z)\|_{C^2(A_1)} + \| d_g(\,\cdotp,z)\|^2_{C^1(A_1)}) ,
\\
&& \|  \psi_{z,T}'''\|_{C^0(\mathcal A)}\leq 6(1+T)(\| d_g(\,\cdotp,z)\|_{C^3(A_1)} + \| d_g(\,\cdotp,z)\|_{C^1(A_1)}\| d_g(\,\cdotp,z)\|_{C^2(A_1)}),\qquad
\eeq
and then one can use \eqref{property 0} at the right hand side, where $b_4$ is a uniform constant.
\end{proof}

Next we estimate the distance between two level sets of $\psi_{z,T}$, or $\psi_{\hat z,\hat T}$, outside of a cylinder of radius $\ell$
(see Definition \ref{def3}) and its consequences.

\begin{lemma} \label{lm_boundary}
Under the Assumption A5, we have,\\
a) \; For $i=1,2$ define $L_i = \{y; y \in \p S(z,T,\ell,\gamma_i), d_g(x,z) > \ell\}$, with $\gamma_1 = \gamma/\sqrt{2}$ and $\gamma_2 = \gamma$. Hence,
\beq\label{property x}
\dist_{dt^2+g}(L_1,L_2)
\geq
 \frac{\gamma^2}{8(T-\ell)}:= c_{180}.
\eeq
Consequently, defining $z_0$, $\Lambda$ and $\Omega_0$ as in Theorem \ref{global2 application}
we get
\beq\label{property x1}
\dist_{\R^{n+1}}(\Lambda, \p \Omega_0) \ge \frac{1}{\sqrt{b_0}} \min\{c_{180}, \frac{3\ell}{4}\}.
\eeq
b) \;
Let $\hat y = (\hat x, \hat t)$ be a point defined in
Definition \ref{def-yhat}, and let $\psi(y;\hat T,\hat z)$ be the associated hyperbolic function. For $i=3,4$ define $L_i = \{y; y \in \p S(\hat z,\hat T,\ell,\gamma_i), d_g(x,\hat z) > \ell\}$.\,
Then, for $\gamma_3 = \gamma /\sqrt{2}$ and $\gamma_4 = \gamma$,
\beq\label{property x2}
\dist_{dt^2+g}(L_3,L_4) \geq c_{180}.\;
\eeq
\end{lemma}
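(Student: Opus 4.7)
The plan is to reduce both parts to the same elementary algebraic estimate on the level sets of the hyperbolic function $\psi(y;T,z)=(T-d_g(x,z))^2-t^2$. Fix $y_i=(t_i,x_i)\in L_i$ for $i=1,2$ and set $a_i:=T-d_g(x_i,z)$. Subtracting the two defining equations $\psi(y_i)=\gamma_i^2$ gives
$$\gamma_2^2-\gamma_1^2=(a_2-a_1)(a_1+a_2)-(t_2-t_1)(t_1+t_2).$$
I will read off the required uniform bounds directly from the definitions of $L_i$ and $S$: the condition $d_g(x_i,z)\ge\ell$ built into $L_i$, together with the positivity $\psi(y_i)=\gamma_i^2>0$, forces $0<a_i\le T-\ell$, while the constraint on $S(z,T,\ell,\gamma_i)$ gives $|t_i|\le T-\ell$. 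Both sums $a_1+a_2$ and $|t_1|+|t_2|$ are then bounded by $2(T-\ell)$. Combining this with the reverse triangle inequality $|a_2-a_1|\le d_g(x_1,x_2)$ yields
$$d_g(x_1,x_2)+|t_1-t_2|\ge\frac{\gamma^2}{4(T-\ell)}.$$
Since $\dist_{dt^2+g}(y_1,y_2)\ge\max(|t_1-t_2|,d_g(x_1,x_2))\ge\tfrac12\bigl(|t_1-t_2|+d_g(x_1,x_2)\bigr)$, I will conclude $\dist_{dt^2+g}(y_1,y_2)\ge\gamma^2/(8(T-\ell))=c_{180}$, which is \eqref{property x}.

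For the consequence \eqref{property x1}, I decompose $\partial\Omega_0$ into the outer hyperbolic piece $L_1$ and the inner spatial cylinder $\{y:d_g(x,z_0)=\ell/4\}$, and estimate the distance from $\Lambda$ to each piece separately. The first distance is at least $c_{180}$ in the $dt^2+g$ metric by what was just proved; the second is at least $3\ell/4$ in the Riemannian metric, since a point of $\Lambda$ has $d_g(x,z_0)>\ell$ and the triangle inequality gives $d_g(x,x')\ge\ell-\ell/4=3\ell/4$ for any $x'$ on the inner cylinder. Converting both bounds to Euclidean bounds via the comparison $d_{\R^{n+1}}\ge\dist_{dt^2+g}/\sqrt{b_0}$, which follows from $[g_{jk}]\le b_0 I$ in \eqref{q 4b summary}, then yields \eqref{property x1}.

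Part (b) requires no new ideas: the same algebraic computation with $T,z$ replaced by $\hat T,\hat z$ gives $\dist_{dt^2+g}(y_1,y_2)\ge\gamma^2/(8(\hat T-\ell))$ for $y_1\in L_3,\ y_2\in L_4$, since the defining constraints of $L_3,L_4$ supply the same bounds $0<a_i\le\hat T-\ell$ and $|t_i|\le\hat T-\ell$. Because \eqref{property x5} gives $\hat T<T-\ell$, and in particular $\hat T-\ell<T-\ell$, this distance is bounded below by $c_{180}$. The whole argument is elementary; the only delicate step is checking at the outset that on $L_i$ both $a_i$ and $|t_i|$ admit the common upper bound $T-\ell$, which is what makes the final estimate collapse to the clean form $\gamma^2/(8(T-\ell))$.
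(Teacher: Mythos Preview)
Your argument is correct and in fact cleaner than the paper's. The paper proves part (a) by writing the same difference
\[
(d_g(z,x_1)-d_g(z,x_0))(2T-d_g(z,x_0)-d_g(z,x_1))+t_1^2-t_0^2=\tfrac{\gamma^2}{2}
\]
and then carrying out a four-case analysis on the size and sign of $t_1^2-t_0^2$ relative to $\gamma^2/2$, in each case extracting a lower bound on either $d_g(x_0,x_1)$ or $|t_0-t_1|$. You bypass the case split entirely: bounding both factors $a_1+a_2$ and $|t_1+t_2|$ by $2(T-\ell)$ and invoking $\max(a,b)\ge(a+b)/2$ collapses everything into two lines. The paper's case analysis gives no additional information, so your route is a genuine simplification. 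Both arguments share the same implicit assumption that the points $y_i\in L_i$ lie on the hyperbolic level set $\psi=\gamma_i^2$ rather than on the time-slab faces $|t|=T-\ell$; the paper makes this explicit by saying ``the two points lie on two level sets of $\psi$'', and the lemma is only used in this sense. For part (b) the paper takes a slightly different shortcut, bounding $2\hat T-d_g(\hat z,x_0)-d_g(\hat z,x_1)\le 2T-d_g(z_0,x_0)-d_g(z_0,x_1)\le 2(T-\ell)$ directly via the triangle inequality and $\hat T=T-d_g(z_0,\hat z)$, whereas you first get $\gamma^2/(8(\hat T-\ell))$ and then use $\hat T<T$ from \eqref{property x5}; both are equally valid. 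Your derivation of \eqref{property x1} is also more explicit than the paper's, which simply asserts it as a consequence.
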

\begin{proof} a) Let $y_0=(t_0,x_0) \in L_2$
and  $y_1=(t_1,x_1) \in L_1$.
Our aim is to get a positive lower bound for $\dist_{dt^2+g}(y_1,y_0)$.
The two points lay on two level sets of $\psi(y;T,z)$, and we consider their difference:
\beq\label{d-g-01}
\psi(y_0;T,z) - \psi(y_1;T,z) = (T-d_g(z,x_0))^2-t_0^2 - (T-d_g(z,x_1))^2+t_1^2 =  \frac{\gamma^2 }{2}
\nonumber\\
(d_g(z,x_1)-d_g(z,x_0))(2T-d_g(z,x_0)-d_g(z,x_1))+t_1^2-t_0^2 =  \frac{\gamma^2 }{2}.
\eeq
By definition, we know that
$$(2T-d_g(z,x_0)-d_g(z,x_1)) \ge 0,\;\;
\ell \le d_g(z,x_0)\le  T - \gamma,\;\; \ell \le d_g(z,x_1)\le T- \frac{\gamma}{\sqrt{2}} < T.$$
Assume w.r.o.g. that $t_0,t_1 \ge 0$.\\
Case 1. $t_1^2-t_0^2 =  m\frac{\gamma^2 }{2}$, $m \in [0,1)$,
\ba
d_g(x_1,x_0)\ge (d_g(z,x_1)-d_g(z,x_0))= \frac{(1-m)\gamma^2 }{2(2T-d_g(z,x_0)-d_g(z,x_1))}\ge \frac{(1-m)\gamma^2 }{2(2T-2\ell)}.
\ea
Case 2. $t_1^2-t_0^2 =  \frac{\gamma^2 }{2}$,
that implies $d_g(z,x_0)=d_g(z,x_1)$ and $d_g(x_1,x_0)\ge 0$.
\\
Case 3. $t_1^2-t_0^2 = q \frac{\gamma^2 }{2}$, $q >1$
\ba
(d_g(z,x_1)-d_g(z,x_0))(2T-d_g(z,x_0)-d_g(z,x_1)) =  -(q-1)\frac{\gamma^2 }{2}
\ea
here one can reverse the signs and prove as in case 1:
\ba
d_g(x_1,x_0)\ge (d_g(z,x_0)-d_g(z,x_1))= \frac{(q-1)\gamma^2 }{2(2T-d_g(z,x_0)-d_g(z,x_1))}\ge \frac{(q-1)\gamma^2 }{2(2T-2\ell)}.
\ea
Case 4. $t_1^2-t_0^2 = -p\frac{\gamma^2 }{2}< 0$, for $p>0$\\
\ba
(d_g(z,x_1)-d_g(z,x_0))(2T-d_g(z,x_0)-d_g(z,x_1)) =  \frac{(1+p)\gamma^2 }{2}.
\ea
Case 1. is then the worse case. Hence for $t_1 = \sqrt{m\frac{\gamma^2}{2}+ t_0^2} \le \sqrt{\frac{\gamma^2}{2}+ (T-\ell)^2}$ and since $\gamma \le (T-\ell)$, we have
\ba
\dist_g(y_1,y_0)= \max\{|t_1-t_0|=\frac{(t_1^2-t_0^2)}{(t_1+t_0)}, d_g(x_1,x_2)\} \ge \\
\ge
\max\{m\frac{\gamma^2}{(\sqrt{\frac{(T-\ell)^2}{2}+ (T-\ell)^2}+T-\ell)},
(1-m)\frac{\gamma^2 }{2(2T-2\ell)}\}
\ge \frac{\gamma^2}{8(T-\ell)}.
\ea
b) We then consider $y_0,y_1$ belonging to two level sets of the function $\psi(y;\hat z,\hat T)$ to calculate the left hand site of \eqref{property x2}. We repeat the same computation as above with the new values.
We recall \eqref{property x5}.
By triangular inequality, since $\hat T = T -d_g(z_0,\hat z)$,
$$(2\hat T-d_g(\hat z,x_0)-d_g(\hat z,x_1)) \le 2T - d_g(z_0,x_0)-d_g(z_0,x_1) \le 2(T - \ell).$$
And again $t_1 = \sqrt{m\frac{\gamma^2}{2}+ t_0^2} \le \sqrt{\frac{(\hat T-\ell)^2}{2}+ (\hat T-\ell)^2}\le 2(\hat T-\ell)\le 2(T-\ell)$.\\
Hence the new estimate is bounded from below by $c_{180}$.
\end{proof}

{
\noindent \small
\\
Roberta Bosi\\
Department of Mathematics and Statistics, University of Helsinki\\
P.O. Box 68,
FI-00014 Helsinki.
\\
E-mail address: roberta.bosi@helsinki.fi
\\\\
Yaroslav Kurylev\\
Department of Mathematics, University College London\\
Gower Street, LONDON WC1E 6BT
\\
E-mail address: y.kurylev@ucl.ac.uk
\\\\
Matti Lassas\\
Department of Mathematics and Statistics, University of Helsinki\\
P.O. Box 68,
FI-00014 Helsinki.
\\
E-mail address: matti.lassas@helsinki.fi
}
\end{document}